\documentclass{amsart}
\usepackage{graphicx} 
\usepackage{epsfig}

\newtheorem{theorem}{Theorem}[section]
\newtheorem{lemma}[theorem]{Lemma}

\newtheorem{proposition}[theorem]{Proposition}
\newtheorem{corollary}[theorem]{Corollary}
\newtheorem{claim}[theorem]{Claim}

\newtheorem{conjecture}[theorem]{Conjecture}
\newtheorem{question}[theorem]{Question}

\theoremstyle{definition}

\newtheorem{example}[theorem]{Example}
\newtheorem{convention}[theorem]{Convention}
\newtheorem{remark}[theorem]{Remark}

\newtheorem*{thmSeiferter}{Theorem~\ref{thm:cisseiferter}}

\newtheorem*{thmLink0}{Theorem~\ref{link=0}}
\newtheorem*{propositionconstantHFK}{Proposition~\ref{prop:constantHFK}}
\newtheorem*{thmSeifertLspace}{Theorem~\ref{twist_seiferter_genus}}
\numberwithin{equation}{section}
\numberwithin{figure}{section}
\numberwithin{table}{section}

\newcommand{\comment}[1]{}
\newcommand{\lk}{\ell k}
\newcommand{\bdry}{\ensuremath{\partial}}

\DeclareMathOperator{\rk}{rk}

\DeclareMathOperator{\br}{br}
\newcommand{\nbhd}{\ensuremath{\mathcal{N}}}

\newcommand{\HFK}{\ensuremath{\widehat{\mathrm{HFK}}}}
\newcommand{\HF}{\ensuremath{\widehat{\mathrm{HF}}}}

\newcommand{\Q}{\ensuremath{\mathbb{Q}}}

\newcommand{\Z}{\ensuremath{\mathbb{Z}}}

\newcommand{\RP}{\ensuremath{\mathbb{RP}}}

\renewcommand{\)}{\textup{)}}

\begin{document}
\baselineskip 14pt

\title{Twist families of L-space knots, their genera, and Seifert surgeries}

\author{Kenneth L. Baker and Kimihiko Motegi}

\dedicatory{}

\begin{abstract}
Conjecturally, there are only finitely many Heegaard Floer L-space knots in $S^3$ of a given genus.  
We examine this conjecture for twist families of knots $\{K_n\}$ obtained by twisting a knot $K$ in $S^3$ along an unknot $c$ in terms of the linking number $\omega$ between $K$ and $c$. 
We  establish the conjecture in the case of $|\omega| \neq 1$, 
prove that $\{K_n\}$ contains at most three L-space knots if $\omega = 0$, 
and address the case where $|\omega| = 1$ under an additional hypothesis about Seifert surgeries. 
To that end, we characterize a twisting circle $c$ for which $\{ (K_n, r_n) \}$ contains at least ten Seifert surgeries. 
We also pose a few questions about the nature of twist families of L-space knots, 
their expressions as closures of positive (or negative) braids, and  their wrapping about the twisting circle.
\end{abstract}

\maketitle

{
\renewcommand{\thefootnote}{}
\footnotetext{2010 \textit{Mathematics Subject Classification.}
Primary 57M25, 57M27
\footnotetext{ \textit{Key words and phrases.}
L-space knot, genus, twisting, seiferter}
}
}
\section{Introduction}
\label{section:Introduction}
The Heegaard Floer homology $\HF(M)$ of a rational homology $3$--sphere $M$ satisfies $\rk \HF (M) \ge  | H_1(M; \mathbb{Z})|$.  
When this is actually an equality so that $\rk \HF(M) =  | H_1(M; \mathbb{Z})|$, then $M$ is an {\em L-space}.  
The set of L-spaces includes the lens spaces (except $S^1 \times S^2$) and all $3$--manifolds with finite fundamental group \cite[Proposition~2.3]{OS3} as well as many other Seifert fibered spaces \cite{OS3, LS}. 

A knot $K$ in the $3$--sphere $S^3$ is called an {\em L-space knot} if $K(r)$, 
the result of $r$--surgery on $K$, 
is an L-space for some $r \in \mathbb{Q}$. 
A non-trivial L-space knot is {\em positive} or {\em negative} according to the sign of $r$; 
only the unknot has both positive and negative L-space surgeries.  

Recall that the {\em knot Floer homology} of a knot $K \subset S^3$ is a  bi-graded, finitely generated abelian group $\HFK(K)$ that categorifies the Alexander polynomial $\Delta_K(t)$ \cite{OS_knot, Ras}, 
and that the knot Floer homology of an L-space knot has a particularly simple, constrained structure \cite{OS3}.

\medskip
This article takes motivation from a ``botany'' conjecture about the knot Floer homology of L-space knots of Hedden and Watson.

\begin{conjecture}[{\cite[Conjecture 6.7]{HW2}}]
\label{conj:finitehfk} 
Let $K$ be an L-space knot and with knot Floer homology $\HFK(K)$. 
Then there are only finitely many other knots whose knot Floer homology is isomorphic \(as bi-graded groups\) to $\HFK(K)$.
\end{conjecture}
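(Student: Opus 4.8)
The plan is to combine the detection theorems for knot Floer homology with the rigidity of L-space knots, and thereby convert the statement into a finiteness question about fibered knots of fixed genus. First I would fix the genus. Since $K$ is an L-space knot, a theorem of Ozsv\'ath--Szab\'o says that $\HFK(K)$ is a ``staircase'' complex completely determined by $\Delta_K$, with top nonzero Alexander grading equal to the Seifert genus $g=g(K)$ and $\HFK_0(K,g)\cong\Z$. Let $K'$ be any knot with $\HFK(K')\cong\HFK(K)$ as bigraded groups. Because knot Floer homology detects the Seifert genus, $g(K')=g$; because it detects fiberedness via $\HFK(K',g)\cong\Z$, the knot $K'$ is fibered; and comparing graded Euler characteristics gives $\Delta_{K'}=\Delta_K$. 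Thus every competitor $K'$ is a fibered knot of genus $g$ carrying the same full staircase.

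Second, I would reduce to a statement about genus alone. For fixed $g$ there are only finitely many Alexander polynomials, hence finitely many staircase isomorphism types, that occur for L-space knots of genus $g$, because the defining exponents $0<n_1<\cdots<n_k$ are all bounded by $g$. So it suffices to bound, for each genus-$g$ staircase, the number of knots realizing it. Restricting to those competitors that are themselves L-space knots turns this into the reformulation highlighted in the abstract---finitely many L-space knots of each fixed genus---which is the form I would attack first; the remaining competitors, which realize the staircase as a bigraded group without being L-space knots, would have to be absorbed separately.

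The hard part is this last finiteness. Recording each competitor by the conjugacy class of its monodromy places them all in the mapping class group of the genus-$g$ fiber $\Sigma_{g,1}$, and the obstacle is that this group has infinitely many conjugacy classes: genus, fiberedness, and Alexander polynomial can never suffice, so the staircase structure must be spent geometrically. The route I would pursue is a compactness argument. Assuming an infinite family $\{K'_i\}$ of distinct knots sharing this $\HFK$, I would pass to the complements and try to show, via geometrization and a limiting argument, that infinitely many of the $K'_i$ organize into finitely many \emph{twist families}, i.e.\ arise as Dehn fillings of common cusped manifolds along a fixed slope. Each such family is precisely a family $\{K_n\}$ obtained by twisting along an unknot $c$, and the results of this paper then bound its L-space members in terms of the linking number $\omega$ and the accompanying surgeries $\{(K_n,r_n)\}$. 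The genuine difficulty---and the reason the conjecture is open in general---is the organizing step: producing the common cusped manifolds and controlling the limit. The present paper circumvents it by taking the twisting circle $c$ as given and analyzing the cases $|\omega|\neq1$, $\omega=0$, and $|\omega|=1$ directly.
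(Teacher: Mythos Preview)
The statement you are addressing is a \emph{conjecture}, not a theorem: the paper does not prove it and presents it explicitly as open motivation, attributed to Hedden--Watson. There is therefore no proof in the paper to compare your proposal against. The paper's contribution is to reformulate the conjecture in terms of genus (Conjecture~\ref{conj:genus}) and then to verify that reformulation only within individual twist families under various hypotheses on the linking number.

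Your reductions in the first two paragraphs are correct and standard: any knot $K'$ with $\HFK(K')\cong\HFK(K)$ is fibered of genus $g(K)$ with the same Alexander polynomial, and for fixed genus there are only finitely many L-space staircases. But this leaves exactly the open problem, and your third paragraph does not close it. The ``organizing step''---extracting from an infinite set of fibered knots of fixed genus a subsequence that forms a twist family along a common unknot $c$---is not supplied by geometrization or any compactness argument currently available. Geometric limits of hyperbolic knot complements produce cusped manifolds and Dehn fillings, but there is no mechanism guaranteeing that the new cusp is an \emph{unknot} in $S^3$, nor that the fillings are of the form $-1/n$ on a fixed slope, nor that the limiting manifold sits in $S^3$ at all. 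You acknowledge this yourself (``the reason the conjecture is open in general''), so what you have written is a heuristic outline rather than a proof. The paper does not circumvent this step; it simply never claims to prove the conjecture, and instead takes the twist family as a hypothesis from the outset.
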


We recast this as a conjecture about genera of L-space knots.
\begin{conjecture} 
\label{conj:genus}
Given an integer $N \geq 0$, 
there are only finitely many  L-space knots $K$ with $g(K) = N$. 
\end{conjecture}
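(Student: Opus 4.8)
The plan is to take the reduction already assembled above at face value and then attack head-on the one geometric issue it leaves open. By the cited structural results, an L-space knot is fibered, so the degree of $\Delta_K$ equals $2g(K)$ and the hypothesis $g(K)\le N$ bounds this degree; the known restrictions on Alexander polynomials of L-space knots then leave only finitely many possibilities for $\Delta_K$, each of which determines $\HFK(K)$. Thus it suffices to fix one admissible Alexander polynomial $\Delta$ of degree $2g\le 2N$ and prove that \emph{only finitely many L-space knots $K$ satisfy $\Delta_K=\Delta$}. Every such $K$ is fibered of the same genus $g$, and so is recorded by the conjugacy class of a monodromy $\varphi_K\in\mathrm{MCG}(\Sigma_{g,1})$ of its once-punctured genus-$g$ fiber; being a positive L-space knot further forces $\varphi_K$ to be right-veering and to support the tight contact structure on $S^3$.

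I would argue by contradiction: suppose there are infinitely many pairwise distinct L-space knots $K_1,K_2,\dots$ with $\Delta_{K_i}=\Delta$ and $g(K_i)=g$. The aim is to exhibit internal structure in such an infinite family, namely that after passing to a subsequence the $K_i$ are obtained from a single knot by twisting along one unknotted circle $c$. Concretely, one seeks an unknot $c$ disjoint from a genus-$g$ fiber surface of $K_1$ so that $K_i=(K_1)_{n_i}$ for a sequence $n_i\to\infty$; a natural source for such a $c$ is a curve in, or a compressing disk for, the fiber that meets the family in a controlled way, detected by a Haken/branched-surface compactness argument applied to the uniformly bounded-genus fibers of the $K_i$.

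Once the family is organized as a twist family $\{K_n\}$ along an unknot $c$ with linking number $\omega=\lk(K,c)$, the results of this paper finish the argument: the genus of $K_n$ grows linearly in $n$ whenever $|\omega|\ge 2$ (Theorem~\ref{genera_twist}), contradicting $g(K_n)=g$ for all $n$; the case $\omega=0$ produces at most three L-space knots in the family; and the case $|\omega|=1$ is controlled under the additional Seifert-surgery hypothesis. In each case the assumed infinitude is contradicted, reducing the whole conjecture to the exceptional $|\omega|=1$ behavior that the paper isolates.

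The main obstacle is precisely the middle step: there is no known structure theorem asserting that an infinite family of fibered knots of a fixed genus must, up to finite ambiguity, arise by twisting along a single unknot. Pure mapping-class-group bookkeeping does not suffice, since $\mathrm{MCG}(\Sigma_{g,1})$ already contains infinitely many right-veering conjugacy classes of fixed genus. Nor does surgery geometry alone deliver finiteness: a positive L-space knot has an L-space filling at the bounded slope $2g-1$, but L-spaces can be hyperbolic, so this filling need not be an exceptional surgery and the Lackenby--Agol bounds on exceptional slopes give no contradiction. Extracting the twisting circle $c$ — equivalently, proving that bounded-genus L-space knots fall into finitely many twist families — is the crux on which a full proof rests, and is exactly the phenomenon that the twist-family analysis of this paper is designed to probe.
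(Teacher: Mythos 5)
You should first note that the statement you are trying to prove is, in this paper, precisely that: a conjecture. The paper never proves Conjecture~\ref{conj:genus}; it only establishes special cases (twist families with $\lk(K,c)=0$ or $|\lk(K,c)|>1$, and the $|\lk(K,c)|=1$ case under the extra hypothesis of at least ten Seifert surgeries). Your opening reduction is sound and matches the paper's own discussion in the introduction: fiberedness of L-space knots, $\deg \Delta_K = 2g(K)$, and the Ozsv\'ath--Szab\'o constraints on Alexander polynomials do reduce the conjecture to showing that each fixed admissible $\Delta$ is realized by only finitely many L-space knots (this is exactly how the paper passes between Conjecture~\ref{conj:finitehfk} and Conjecture~\ref{conj:genus}). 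But from there your argument has a genuine, fatal gap, which to your credit you name yourself: there is no structure theorem organizing an infinite family of fixed-genus fibered (or L-space) knots into finitely many twist families along unknots. The mechanism you propose --- a Haken/branched-surface compactness argument applied to the bounded-genus fibers --- does not apply here: such finiteness results (finitely many branched surfaces carrying all incompressible or norm-minimizing surfaces) operate inside a \emph{single fixed} compact $3$--manifold, whereas your knots $K_1, K_2, \dots$ are distinct knots in $S^3$ with pairwise distinct exteriors and no common ambient structure. Sharing an Alexander polynomial and a genus imposes no known geometric relation that would produce a single twisting circle $c$ with $K_i = (K_1)_{n_i}$.

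A second, independent gap: even if you could extract the twist family, the endgame does not close in the critical case $|\lk(K,c)|=1$. The paper's unconditional results handle $\omega=0$ (Theorem~\ref{link=0}) and $|\omega|\geq 2$ (Theorem~\ref{genera_twist}), but for $|\omega|=1$ the paper's Proposition~\ref{prop:constantHFK} shows that a twist family with infinitely many L-space surgeries has \emph{constant} $\HFK$ and constant genus --- which is exactly the scenario you are trying to rule out, not a contradiction of it. The finiteness statement for $|\omega|=1$ (Theorem~\ref{twist_seiferter_genus}) requires the additional hypothesis of at least ten Seifert surgeries in the family, which nothing in your setup supplies; whether $c$ must then be a meridian is precisely the open Question~\ref{ques:meridian}. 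So your proposal is an honest research program rather than a proof: both the reduction to twist families and the $|\omega|=1$ case remain open, and the conjecture stands unproven in the paper and in your argument alike.
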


\begin{proof}[Proof of equivalence of Conjectures~\ref{conj:finitehfk} and \ref{conj:genus}.]
Assume that Conjecture~\ref{conj:finitehfk} holds. 
Suppose for a contradiction that there are infinitely many L-space knots $K$ with $g(K) = N$ for some non-negative integer $N$. 
If necessary, by taking mirrors, 
we may assume that such L-space knots are positive. 
Since the degree of their Alexander polynomials is bounded above by $2g(K) = 2N$, 
and their non-zero coefficients are $\pm 1$ \cite[Corollary~1.3]{OS3}, 
there are only finitely many Alexander polynomials. 
Moreover, the Alexander polynomial of a positive L-space knot determines its $\HFK$ \cite[Theorem~1.2]{OS3}; 
see also \cite{Mano}. 
Thus infinitely many L-space knots share the same $\HFK$, contradicting the assumption. 

To prove the converse we assume Conjecture~\ref{conj:genus} holds, 
and suppose for a contradiction that there is an L-space knot $K$ for which there are infinitely many knots 
$K_i$ ($i = 1, 2, \dots$) with 
$\HFK(K_i) \cong \HFK(K)$ as bi-graded groups. 
By the rational surgery formula \cite{OS4}, 
any knot with $\HFK$ isomorphic to that of an L-space knot as bi-graded groups is also an L-space knot, 
and hence $K_i$ ($i = 1, 2, \dots$) is also an L-space knot. 
Also, since knot Floer homology detects genus \cite{OS_genus}, this also implies that $g(K) = g(K_1) = g(K_2) =\cdots$. 
This shows that infinitely many L-space knots have the same genus, contradicting the assumption. 
\end{proof}

\subsection{Twist families of knots}

In this article we examine Conjecture~\ref{conj:genus} for twist families of knots.  
The {\em twist family of knots} $\{K_n\}$ obtained by twisting a knot $K$ along a disjoint unknot $c$ is the sequence of knots that are the images of $K$ upon $(-\frac{1}{n})$--surgery on $c$ for $n \in \mathbb{Z}$. 
In the following we always assume that $c$ neither bounds a disk disjoint from $K$ nor is a meridian of $K$. 
Then it follows from \cite{KMS} that for each integer $m$, 
there are only finitely many integers $n$ such that $K_n$ is isotopic to $K_m$, 
in particular, the twist family $\{ K_n \}$ contains infinitely many distinct knots. 
Then Conjecture~\ref{conj:genus} for twist families of L-space knots is stated as: 

\medskip

\begin{conjecture} 
\label{conj:genus_twist}
For any twist family of knots $\{ K_n \}$ and any integer $N \ge 0$ 
there are only finitely many L-space knots $K_n$ such that $g(K_n) = N$. 
\end{conjecture}

Towards this conjecture, 
we first develop Theorem~\ref{genera_twist} which describes an asymptotic behavior of genera of knots under twisting in the general setting according to the linking number of $K$ and $c$.  
This theorem has the following direct consequence when the linking number is greater than $1$.

\begin{theorem}
\label{lk>1}
Let $\{ K_n \}$ be a twist family of knots obtained by twisting $K$ along $c$. 
If $|\lk(K,c)| > 1$, 
then $g(K_n) \to \infty$ as $|n| \to \infty$. 
In particular, Conjecture~\ref{conj:genus_twist} is true for any twist family of L-space knots with $|\lk(K,c)| > 1$. 
\end{theorem}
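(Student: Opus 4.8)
My plan is to obtain the conclusion directly from the asymptotic description of genera in Theorem~\ref{genera_twist}, together with a short formal argument for the surgery/conjecture statement. Write $\omega = \lk(K,c)$. I expect Theorem~\ref{genera_twist} to assert that $g(K_n)$ grows linearly in $|n|$ with leading coefficient $\tfrac12\,|\omega|(|\omega|-1)$; more precisely, that there is a constant $C = C(K,c)$ with
\[
\bigl|\,2g(K_n) - |\omega|\,(|\omega|-1)\,|n|\,\bigr| \le C
\]
for all $n$, or at the very least that $g(K_n)\to\infty$ whenever $|\omega|(|\omega|-1)>0$. The hypothesis $|\lk(K,c)|>1$ forces $|\omega|\ge 2$, so $|\omega|(|\omega|-1)\ge 2>0$: the leading coefficient is strictly positive, and hence $g(K_n)\to\infty$ as $|n|\to\infty$. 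Thus the first assertion of the theorem is immediate once Theorem~\ref{genera_twist} is in hand.

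To indicate why such growth should hold --- and to locate the one genuinely hard point --- I would model $K_n$ near $c$ as a twist region. Taking a disk $D$ bounded by $c$ that meets $K$ in its wrapping number of points, $K_n$ is built from $K$ by inserting $n$ full twists on these strands, and applying Seifert's algorithm to the resulting diagram yields a Seifert surface whose genus grows at most linearly in $|n|$; in the coherently oriented case (when the wrapping number equals $|\omega|$) each full twist contributes exactly $|\omega|(|\omega|-1)$ crossings without changing the number of Seifert circles, pinning the leading coefficient to $\tfrac12\,|\omega|(|\omega|-1)$. This gives the upper bound for free. The substance of Theorem~\ref{genera_twist} is the matching \emph{lower} bound, which I would extract from $2g(K_n)\ge \deg\Delta_{K_n}(t)$ after computing the Alexander polynomial of the family via the Torres formula for $K\cup c$ (equivalently, the surgery formula under $(-1/n)$--surgery on $c$), whose breadth acquires a term linear in $|n|$ with rate governed by $\omega$. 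The main obstacle is exactly this: one must prevent cancellation of the extreme-degree terms of $\Delta_{K_n}$, i.e.\ show that the top and bottom coefficients are eventually nonzero, so that the degree truly grows; this non-vanishing is the analytic heart supplied by Theorem~\ref{genera_twist}, and I would invoke it rather than reprove it. Note that the threshold $|\omega|>1$ is the correct one: for $\omega=0$ the coefficient vanishes and the genus stays bounded, as recorded in Theorem~\ref{link=0}, while $|\omega|=1$ is the delicate borderline.

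Finally, the ``in particular'' clause is purely formal. Fix an integer $N\ge 0$. Since $g(K_n)\to\infty$ as $|n|\to\infty$, the set $\{\,n\in\Z : g(K_n)\le N\,\}$ is finite; a fortiori only finitely many indices $n$ for which $K_n$ is an L-space knot satisfy $g(K_n)\le N$. As $N$ was arbitrary, this is precisely Conjecture~\ref{conj:genus_knots} for the family $\{K_n\}$, completing the deduction.
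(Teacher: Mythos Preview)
Your approach is correct and matches the paper's treatment: Theorem~\ref{lk>1} is introduced there simply as ``a direct consequence of Theorem~\ref{genera_twist}'', with no further argument given, and your final paragraph handling the ``in particular'' clause is exactly the intended formal step.

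One correction worth noting: Theorem~\ref{genera_twist} does \emph{not} assert the specific linear growth rate $\tfrac12|\omega|(|\omega|-1)$ you anticipate, nor any two-sided estimate of that form. It is instead a trichotomy: case~(1) has $\omega=0$, case~(2) has $|\omega|=1$ together with the side condition $\Delta_{K\cup c}(x,y)\doteq\Delta_K(x)$, and case~(3) has $|\omega|\ge 1$ with $g(K_n)\to\infty$. When $|\omega|>1$, cases~(1) and~(2) are excluded by the linking-number hypothesis alone, forcing~(3). This is precisely your fallback ``or at the very least'' statement, so your deduction stands; but the sharper growth formula is neither stated nor proved in the paper (and the upper bound you sketch via Seifert's algorithm would in general depend on the wrapping number rather than $|\omega|$). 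Your heuristic outline of the lower-bound mechanism --- Torres formula applied to $K\cup c$, then tracking the breadth of $\Delta_{K_n}$ --- is in fact exactly the route the paper takes in proving Theorem~\ref{genera_twist} itself, with the key point being that $\br_y(\Delta_{K\cup c}(x,y))\ge \omega-1>0$ when $\omega\ge 2$.
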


When the linking number is $0$, 
Theorem~\ref{link=0tightfiber} constrains the contact structures supported by the fibered knots and their mirrors in the twist family.  
The next theorem follows from this together with the fact that an L-space knot or its mirror is a fibered knot supporting the tight contact structure on $S^3$  \cite[Corollary~1.4 and Proposition~2.1]{Hed_positive}.

\begin{theorem}
\label{link=0}
Let $\{ K_n \}$ be a twist family of knots obtained by twisting $K$ along $c$.  
If $\lk(K,c) = 0$, 
then $K_n$ is an L-space knot for at most three integers $n$.
Furthermore, if $K_m$ and $K_n$ are L-space knots, then $|m - n| \le 2$.
\end{theorem}

In Theorem~\ref{link=0} we actually expect that there are at most two such integers $m, n$ with $|m -n| \le 1$. 
In contrast, 
for each integer $\omega > 1$ there are infinitely many twist families $\{ K_n \}$ 
each of which contains infinitely many L-space knots with $|\lk(K,c)| = \omega$; 
see \cite[Theorem~1.8]{Mote} and Subsection~\ref{twisted torus knots}.

\subsection{Twist families of surgeries}

Given a slope $r$ for $K$, 
then twisting along $c$ produces the twist family of knot-slope pairs $\{(K_n, r_n)\}$ 
called the {\em twist family of surgeries}, 
and the twist family of Dehn surgered manifolds $\{K_n(r_n)\}$.    
We call a knot-slope pair $(K, r)$ an {\em L-space surgery} if $K(r)$ is an L-space. 
Note that if $\omega = \lk(K,c)$, 
then $r_n = r_0+n \omega^2$.

\begin{remark}
\label{linear bound}
Given a twist family of surgeries $\{(K_n,r_n)\}$, 
there is a linear function of $n$ that bounds the genus of $K_n$ from above
whenever $K_n(r_n)$ is an L-space.  
This is due to the relation of genus and L-space surgery slope of Ozsv\'ath-Szab\'o \cite{OS4}.  
In particular, $g(K_n) \leq \tfrac{1}{2}(1+|r_0+n\omega^2|)$.  
\end{remark}

Let us specify Conjecture~\ref{conj:genus_twist} in terms of 
a twist family of surgeries. 

\medskip

\begin{conjecture}
\label{conj:genus_surgeries}
Let $\{ (K_n, r_n)\}$ be a twist family of surgeries.  
Then for any integer $N \ge 0$ there are only finitely many L-space surgeries $(K_n, r_n)$ such that 
$g(K_n) = N$. 
\end{conjecture}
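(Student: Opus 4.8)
The plan is to read Conjecture~\ref{conj:genus_surgeries} not as an isolated assertion but as a genuine reformulation of Conjecture~\ref{conj:genus_knots}, and accordingly to establish the logical equivalence of the two. I would interpret ``finitely many L-space knots $K_n$'' as finitely many indices $n$ for which $K_n$ is an L-space knot of the prescribed genus, consistent with the index bookkeeping already used in Theorem~\ref{link=0}. With that convention fixed, I would prove each conjecture from the other by contraposition, so that their finite versions may be used interchangeably in the sequel.

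First I would dispatch the direction Conjecture~\ref{conj:genus_knots} $\Rightarrow$ Conjecture~\ref{conj:genus_surgeries}, which is essentially immediate. Given a twist family of surgeries $\{(K_n,r_n)\}$, its first coordinates form a twist family of knots $\{K_n\}$ obtained by twisting $K$ along the same $c$, and by the definition of an L-space surgery every L-space surgery $(K_n,r_n)$ has $K_n$ an L-space knot. Hence the set of indices $n$ with $(K_n,r_n)$ an L-space surgery and $g(K_n)\le N$ is contained in the set of indices with $K_n$ an L-space knot and $g(K_n)\le N$; the latter is finite under Conjecture~\ref{conj:genus_knots}, so the former is as well. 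No surgery-theoretic input is needed here.

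The substance lies in the reverse direction, Conjecture~\ref{conj:genus_surgeries} $\Rightarrow$ Conjecture~\ref{conj:genus_knots}, and here I would argue by contraposition and invoke Lemma~\ref{twist family}. Suppose Conjecture~\ref{conj:genus_knots} fails, so some twist family $\{K_n\}$ contains infinitely many L-space knots of genus at most $N$. Lemma~\ref{twist family} then furnishes a choice of slopes $r_n$ for which $\{(K_n,r_n)\}$ contains infinitely many L-space surgeries on knots of bounded genus, contradicting Conjecture~\ref{conj:genus_surgeries}. The point to watch is the order of quantifiers: Conjecture~\ref{conj:genus_surgeries} ranges over \emph{all} twist families of surgeries, so it suffices that Lemma~\ref{twist family} produce a single good slope family, which its proof does by fixing $r_0 \ge 2C-1$ and reparametrizing.

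The main obstacle is therefore not the equivalence itself---once Lemma~\ref{twist family} is in hand the argument is a short contrapositive---but the content packaged into that lemma, namely the Ozsv\'ath--Szab\'o bound relating genus to L-space surgery slopes (recorded in Remark~\ref{linear bound}), which is what guarantees that the large slopes $r_n = r_0 + n\omega^2$ actually yield L-space surgeries once the genera stay bounded. The only remaining care is the counting convention: one must confirm that ``finitely many'' is measured by indices $n$ rather than by distinct knot types, though the two notions coincide outside the degenerate behavior already controlled by Theorems~\ref{link=0} and~\ref{lk>1}.
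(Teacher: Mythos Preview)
Your reading is correct: Conjecture~\ref{conj:genus_surgeries} is not proved in the paper but is presented as a reformulation of Conjecture~\ref{conj:genus_knots}, and the paper's justification is precisely Lemma~\ref{twist family} together with the trivial observation that an L-space surgery $(K_n,r_n)$ forces $K_n$ to be an L-space knot. Your proposal spells out both directions of this equivalence in the same way the paper intends, so it is correct and follows the paper's approach.
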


Theorems~\ref{link=0} and \ref{lk>1} verify Conjecture~\ref{conj:genus_twist} 
and hence Conjecture~\ref{conj:genus_surgeries} for twist families obtained by twisting $K$ along $c$ when $|\lk(K,c)| \neq1$.   
In the case $\lk(K, c) = 1$, 
we prove:

\begin{proposition}
\label{prop:constantHFK}
Let $\{(K_n,r_n)\}$ be a twist family obtained by twisting $(K,r)$ along an unknot $c$ with $|\lk(K, c)| = 1$. 
If this family contains infinitely many L-space surgeries,  
then 
\begin{enumerate}
\item $\Delta_{K \cup c} (x,y) \doteq \Delta_{K}(x) \doteq \Delta_{K_n}(x)$ for all $n \in \Z$, 
\end{enumerate}
and there is an integer $N$ such that 
\begin{enumerate}
\item[(2)] $\HFK(K_n) \cong \HFK(K_N)$ for infinitely many integers $n$, and in particular
\item[(3)] $g(K_n) = g(K_N)$ for infinitely many integers $n$.  
\end{enumerate}
\end{proposition}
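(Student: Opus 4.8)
The plan is to reduce all three conclusions to the single assertion that the two-variable Alexander polynomial $\Delta_{K\cup c}(x,y)$ is independent of $y$, that is, that its breadth $w$ in the variable $y$ (the difference of its top and bottom $y$-degrees) is zero. Granting $w=0$, the Torres condition $\Delta_{K\cup c}(x,1)\doteq\Delta_K(x)$, valid because $c$ is unknotted and $|\lk(K,c)|=1$, identifies $\Delta_{K\cup c}(x,y)\doteq\Delta_K(x)$, and the twisting formula below then gives $\Delta_{K_n}(x)\doteq\Delta_K(x)$ for every $n\in\Z$, which is $(1)$. Conclusion $(3)$ follows at once, since each of the infinitely many L-space knots $K_n$ is fibered, so $2g(K_n)=\br_x\Delta_{K_n}=\br_x\Delta_K$ is constant; and $(2)$ follows because the Alexander polynomial of an L-space knot determines its $\HFK$, so equal Alexander polynomials yield isomorphic $\HFK$.

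Set $\omega=\lk(K,c)$, so $\omega^2=1$ and $r_n=r_0+n$. The first ingredient is a twisting formula for the Alexander polynomial, which I would derive from the Torres formula: the exterior of $K_n\cup c'$, where $c'$ is the core of the surgery solid torus, is homeomorphic to that of $K\cup c$, and the meridian of $c'$ is $-\mu_c+n\lambda_c$, so the coordinate change $y\mapsto x^{n\omega}y^{-1}$ together with Torres yields
\[
\Delta_{K\cup c}(x,x^{n\omega})\doteq\frac{x^{\omega}-1}{x-1}\,\Delta_{K_n}(x).
\]
As $|\omega|=1$ the fraction is a unit, so $\Delta_{K_n}(x)\doteq\Delta_{K\cup c}(x,x^{n\omega})$. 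Substituting $y=x^{n\omega}$ spreads the $y$-degrees into clusters of $x$-degree separated by about $|n|$, whence $\br_x\Delta_{K_n}=|n|\,w+O(1)$ as $|n|\to\infty$; this is the computation underlying Theorem~\ref{genera_twist}. Along the infinite subsequence of $n$ for which $(K_n,r_n)$ is an L-space surgery, $K_n$ is a (fibered) L-space knot, so $2g(K_n)=\br_x\Delta_{K_n}=|n|\,w+O(1)$, while Remark~\ref{linear bound} gives $2g(K_n)\le 1+|r_0+n|$. Comparing these as $|n|\to\infty$ forces $w\le 1$.

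It remains to exclude $w=1$, and this is the crux of the argument. Suppose $w=1$ and write $\Delta_{K\cup c}(x,y)\doteq A(x)+B(x)\,y$ with $A,B\neq0$. The symmetry $\Delta_{K\cup c}(x^{-1},y^{-1})\doteq\Delta_{K\cup c}(x,y)$ of the multivariable Alexander polynomial interchanges the two extreme $y$-coefficients up to a factor $\pm x^{a}$, so $B(x)\doteq\pm x^{a}A(x^{-1})$ and hence $|A(1)|=|B(1)|$. On the other hand the Torres condition $\Delta_{K\cup c}(1,y)\doteq1$, again from $c$ unknotted with $|\omega|=1$, says that $A(1)+B(1)\,y$ is a unit, which forces one of $A(1),B(1)$ to vanish and the other to equal $\pm1$; this is incompatible with $|A(1)|=|B(1)|$. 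Therefore $w=0$, establishing $(1)$ and with it $(2)$ and $(3)$. I expect the delicate points to be pinning down the twisting formula together with its unit factor in the case $|\omega|=1$, and justifying the asymptotic $\br_x\Delta_{K_n}=|n|\,w+O(1)$; once these are in hand, the closing contradiction between the symmetry and the Torres conditions is short.
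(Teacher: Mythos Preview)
Your proposal is correct and follows essentially the same route as the paper's proof: derive the twisting formula $\Delta_{K_n}(t)\doteq\Delta_{K\cup c}(t,t^{\pm n\omega})$, compare the linear growth $\br(\Delta_{K_n})=|n|\,w+O(1)$ against the L-space genus bound $2g(K_n)\le 1+|r_n|$ to force $w\le 1$, and then rule out $w=1$ by a parity/symmetry argument to conclude $w=0$. The only cosmetic difference is that the paper packages your final contradiction (symmetry of $\Delta_{K\cup c}$ versus the Torres evaluation $\Delta_{K\cup c}(1,y)\doteq 1$) as a separate parity lemma, Lemma~\ref{lem:parity}, which says $\br_y\Delta_{K\cup c}\equiv_2\omega-1$; your inline argument is precisely the $\omega=1$ case of that lemma.
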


\begin{corollary}
\label{lk=1_finite}
Let $\{ (K_n, r_n)\}$ be a twist family of surgeries with $|\lk(K, c)| = 1$. 
If $g(K_n) \to \infty$ as $|n| \to \infty$, 
then $\{ (K_n, r_n)\}$ contains only finitely many L-space surgeries. 
\end{corollary}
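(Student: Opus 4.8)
The plan is to prove the contrapositive and lean entirely on Proposition~\ref{prop:constantHFK}. Suppose, for contradiction, that $\{(K_n,r_n)\}$ contains infinitely many L-space surgeries. Since $\lk(K,c)=1$, Proposition~\ref{prop:constantHFK}(3) then guarantees that $g(K_n)=g(K_N)$ for infinitely many integers $n$; in particular the genera of the knots $K_n$ do not diverge. This directly contradicts the hypothesis that $g(K_n)\to\infty$ as $|n|\to\infty$, which forces $g(K_n)$ to exceed any fixed bound for all sufficiently large $|n|$. Hence the family can contain only finitely many L-space surgeries.

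To make the last step precise, I would argue as follows. The hypothesis $g(K_n)\to\infty$ as $|n|\to\infty$ means that for every constant $C$ there are only finitely many $n$ with $g(K_n)\le C$. On the other hand, Proposition~\ref{prop:constantHFK}(3) produces a single value $g(K_N)$ attained by $g(K_n)$ for infinitely many $n$. Setting $C=g(K_N)$, these infinitely many indices $n$ all satisfy $g(K_n)=g(K_N)\le C$, which is incompatible with only finitely many indices meeting that bound. This contradiction completes the argument.

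There is essentially no hard obstacle here, since the corollary is a formal consequence of the proposition once the logical direction is set up correctly. The only point requiring minor care is verifying that Proposition~\ref{prop:constantHFK} applies: its hypothesis requires $|\lk(K,c)|=1$ (satisfied, as $\lk(K,c)=1$) and that the family contain infinitely many L-space surgeries (which is exactly our assumption for contradiction). I would also note in passing that the conclusion is consistent with the broader picture established by Theorem~\ref{lk>1} for the case $|\lk(K,c)|>1$, so that together these results dispose of Conjecture~\ref{conj:genus_surgeries} for all twist families whose genera diverge, regardless of the linking number.
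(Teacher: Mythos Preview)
Your argument is correct and matches the paper's treatment: the paper states Corollary~\ref{lk=1_finite} immediately after Proposition~\ref{prop:constantHFK} without supplying a separate proof, so it is intended as the direct contrapositive consequence of part (3) that you spell out. There is nothing to add.
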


\subsection{Twist families of Seifert fibered L-space surgeries}

Common examples of twist families of surgeries containing infinitely many L-space surgeries have infinitely many 
L-space surgeries in which the resulting manifolds are Seifert fibered; 
see \cite{Mote} for such examples.

\begin{convention}
\label{Seifert}
Throughout this article, 
we permit Seifert fibrations to have ``degenerate" fibers (i.e.\ index zero fibers). 
Accordingly, a Seifert fibered space is a $3$--manifold admitting a Seifert fibration with or without degenerate fibers. 
When we discuss surgeries, following the convention in \cite{DMM1}, 
we call a knot-slope pair $(K, r)$ a {\em Seifert surgery} if $K(r)$ is a Seifert fibered space in our generalized sense. 
See Section~2 in \cite{DMM1} for degenerate Seifert fibrations. 
Since connected sums of lens spaces are Seifert fibered L-spaces (in our sense), 
$(T_{p, q}, pq)$ is an L-space surgery and a Seifert surgery as well.
\end{convention}

The next result shows finiteness of L-space surgeries in a twist family $\{ (K_n, r_n) \}$ which contains at least $10$ Seifert surgeries:

\begin{theorem}
\label{twist_seiferter_genus}
Let $\{(K_n, r_n)\}$ be a twist family of surgeries obtained by twisting $(K, r)$ along an unknot $c$ with $|\lk(K, c)| = 1$. 
Assume that $(K_n, r_n)$ is a Seifert surgery for at least ten integers $n$. 
Then there are only finitely many L-space surgeries in the family.   
\end{theorem}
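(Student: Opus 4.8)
The plan is to reduce everything to the growth of the genera $g(K_n)$ and then quote the finiteness already established in the linking-number-one case. Concretely, by Corollary~\ref{lk=1_finite} it suffices to prove that $g(K_n)\to\infty$ as $|n|\to\infty$; the hypothesis of ten Seifert surgeries is to be used precisely to force this growth. As a foil, Proposition~\ref{prop:constantHFK} tells us what would go wrong if the genera stayed bounded: were the family to contain infinitely many L-space surgeries, the two-variable Alexander polynomial would degenerate to $\Delta_{K\cup c}(x,y)\doteq\Delta_K(x)$ and infinitely many $K_n$ would share a common $\HFK$, hence a common genus. So the single configuration to rule out is a twist family in which the knots become arbitrarily complicated yet keep bounded genus.

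First I would invoke the characterization of twisting circles carrying many Seifert surgeries, Theorem~\ref{thm:cisseiferter}: since $(K_n,r_n)$ is a Seifert surgery for at least ten values of $n$, the circle $c$ is a seiferter for $(K,r)$. Hence $K_n(r_n)$ is a Seifert fibered space for every $n$, fibered over a base orbifold whose cone data is constant in $n$ apart from the single exceptional fiber arising from $c$, whose Seifert invariant varies affinely with $n$. I would then record the homological bookkeeping: since $r_n=r_0+n\omega^2=r_0+n$, the order $|H_1(K_n(r_n);\Z)|$ is the numerator of $r_0+n$ and tends to $\infty$; as the base orbifold and all the other fibers are fixed, this entire growth is absorbed by the fiber coming from $c$, so that fiber's multiplicity (or the Euler number) must diverge.

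The heart of the matter is to convert this divergence of Seifert data into divergence of $g(K_n)$. On the one hand, each L-space knot $K_n$ is fibered, so $2g(K_n)=\deg\Delta_{K_n}$, and Theorem~\ref{genera_twist} describes $\deg\Delta_{K_n}$ asymptotically through a Torres-type specialization of $\Delta_{K\cup c}$; the degenerate identity $\Delta_{K\cup c}\doteq\Delta_K$ singled out by Proposition~\ref{prop:constantHFK} is exactly the equality that bounded genus would require, and which I aim to contradict. On the other hand, I would use the classification of Seifert fibered L-spaces over $S^2$ (the absence of a horizontal foliation, in the form due to Lisca--Stipsicz and Boyer--Gordon--Watson) both to constrain which $n$ can yield L-spaces once the $c$-fiber has large multiplicity and to tie the growing Seifert data to the Thurston norm, hence the genus, of $K_n$.

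The main obstacle is precisely this coupling between Seifert complexity and knot genus for a seiferter of linking number one. A seiferter with $|\lk(K,c)|=1$ is a very rigid object, and the difficulty is to show that twisting along such a fiber cannot inflate the Seifert invariants of $K_n(r_n)$ while leaving $g(K_n)$ bounded --- the only scenario compatible with the constant-$\HFK$ conclusion of Proposition~\ref{prop:constantHFK}. I expect the crux to be an explicit computation of how a $(-1/n)$-twist along a linking-number-one fiber simultaneously alters the Seifert invariants of $K_n(r_n)$ and the genus of $K_n$, showing that these grow together. Once $g(K_n)\to\infty$ is in hand, Corollary~\ref{lk=1_finite} immediately yields the finiteness of L-space surgeries and completes the proof.
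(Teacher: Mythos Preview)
Your reduction to genus growth is a genuine gap. You correctly invoke Theorem~\ref{thm:cisseiferter} to make $c$ a seiferter (you omit ruling out the pseudo-seiferter case, but Proposition~\ref{prop:pseudoseiferterlinking} handles that). The problem is the next step: you propose to prove $g(K_n)\to\infty$ and then cite Corollary~\ref{lk=1_finite}. But by Theorem~\ref{genera_twist}, for $|\lk(K,c)|=1$ the dichotomy is exactly $g(K_n)\to\infty$ \emph{or} $\Delta_{K\cup c}(x,y)\doteq\Delta_K(x)$, and you offer no mechanism to exclude the second branch from the seiferter hypothesis alone. Your hoped-for ``coupling between Seifert complexity and knot genus'' is not established anywhere: the growing multiplicity of the $c$-fiber in $K_n(r_n)$ controls $|H_1|$, not the genus of $K_n$, and there is no general inequality linking Seifert invariants of a surgery to the genus of the surgered knot. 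Indeed, Question~\ref{ques:meridian} records that the degenerate-polynomial case with bounded genus is open in general.

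The paper does \emph{not} attempt to prove $g(K_n)\to\infty$. Instead it argues by contradiction: assume infinitely many L-space surgeries; then (Theorem~\ref{L-space seiferter} and Proposition~\ref{prop:Lspacefamily}) the limit manifold $M_c(K,m)$ --- the $(m,0)$-surgery on $K\cup c$ --- is an L-space. Since $|\lk(K,c)|=1$, $M_c(K,m)$ is an integral homology sphere, and a Seifert fibered L-space homology sphere is $S^3$ or $\Sigma(2,3,5)$ (Lemma~\ref{Mc}). The case $S^3$ forces $c$ to be a meridian of a torus knot via Ni--Zhang's characterizing slopes (Lemma~\ref{torus_meridian}); the case $\Sigma(2,3,5)$ is excluded by building a smooth homology $4$-ball bounding it and invoking the Rokhlin invariant (Lemma~\ref{Poincare}). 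The remaining base-orbifold cases ($\RP^2$, reducible, degenerate) are dispatched separately. None of this passes through $g(K_n)$.
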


In the course of the proof of Theorem~\ref{twist_seiferter_genus}, 
we characterize a twist family of surgeries which contains a large number of Seifert surgeries. 
In doing so, we extend the foundational work of \cite{DMM1} on seiferters. 
See Section~\ref{sec:seiferter} for terminology and background regarding seiferters and pseudo-seiferters.  
Notably, we prove the following theorem.

\begin{theorem}
\label{thm:cisseiferter}
Let $\{(K_n,r_n)\}$ be a twist family of surgeries obtained by twisting $(K,r)$ along an unknot $c$ that is neither split from $K$ nor a meridian of $K$.  
If $(K_n, r_n)$ is a Seifert surgery for at least $10$ integers $n$, 
then $c$ is a seiferter or pseudo-seiferter for $(K, r)$.  
Consequently, $(K_n, r_n)$ is then a Seifert surgery for all integers $n$.
\end{theorem} 

Proposition~\ref{prop:pseudoseiferterlinking} gives the constraint that a pseudo-seiferter $c$ for a knot $K$ in $S^3$ must satisfy $|\lk(K,c)| \neq 1$.
However, we have not actually found any example of a pseudo-seiferter for a knot in $S^3$.

\begin{question}
Does there exist a pseudo-seiferter for a Seifert surgery on a knot in $S^3$?
\end{question}

\subsection{Notation and organization}

Throughout the paper we will use $N(*)$ to denote a tubular neighborhood of $*$ and 
use $\nbhd(*)$ to denote the interior of $N(*)$ for notational simplicity. 

\medskip

The rest of the paper is organized as follows. 
In Section~\ref{genera of twist knots} we investigate behavior of genera of knots under twisting 
operation using Alexander polynomials, 
and prove Theorem~\ref{genera_twist} which immediately implies Theorem~\ref{lk>1}. 
Proposition~\ref{prop:constantHFK} will be also proved in Section~\ref{genera of twist knots}. 
Section~\ref{sec:link=0} treats twist families of knots obtained by twisting $K$ along an unknot $c$ 
with $\lk(K, c) = 0$, 
and prove Theorem~\ref{link=0} as a corollary of the more general result 
Theorem~\ref{link=0tightfiber}. 
In Section~\ref{sec:seiferter} we will extend the foundational work of \cite{DMM1} on seiferters, 
and establish Theorem~\ref{thm:cisseiferter} and Proposition~\ref{prop:pseudoseiferterlinking}, 
which studies the linking of pseudo-seiferters. 
The proof of Theorem~\ref{twist_seiferter_genus} will be given in Section~\ref{sec:link=1}. 
In Section~\ref{braids}, 
we study Conjectures~\ref{conj:genus} and \ref{conj:genus_twist} from a viewpoint of braids, 
and provide examples of twist families of L-space knots whose twisting circles are not braid axes, 
but each L-space knot can be re-arranged as closures of positive or negative braids. 
Finally, in the last section, 
we will pose a few questions about the nature of twist families of L-space knots, 
their expressions as closures of positive (or negative) braids, and  their wrapping about the twisting circle.

\bigskip 

\section{Alexander polynomials and genera of knots in twist families}
\label{genera of twist knots} 

In this section we first prove the general result Theorem~\ref{genera_twist} below, 
which describes the behavior of  the genera of knots under the twisting operation. 
Then we will prove Proposition~\ref{prop:constantHFK} after preparing Lemma~\ref{lem:parity}. 

\medskip

\begin{theorem}
\label{genera_twist}
Let $\{K_n\}$ be the twist family of knots in a homology sphere obtained by twisting the knot $K$ along an unknot $c$.  
Then one of the following occurs:
\begin{enumerate}
\item $\lk(K,c) =0$ and $g(K_n)$ is constant for all but at most one $n$ for which $g(K_n)$ may be less,
\item $|\lk(K,c)| =1$ and $\Delta_{K \cup c} (x,y) \doteq \Delta_{K}(x)$, or
\item $|\lk(K,c)| \geq 1$ and $g(K_n) \to \infty$ as $|n| \to \infty$.
\end{enumerate}
\end{theorem}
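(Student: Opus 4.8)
The plan is to reduce everything to two inputs: a surgery formula computing the one-variable Alexander polynomial $\Delta_{K_n}(t)$ from the multivariable Alexander polynomial $\Delta_{K \cup c}(x,y)$ of the link, together with the universal inequality $2g(K_n) \ge \mathrm{breadth}_t\,\Delta_{K_n}(t)$, where the breadth is the top exponent minus the bottom exponent. To set up the surgery formula I would observe that $(-1/n)$--surgery on $c$ kills the class of $\mu_c - n\lambda_c$, and since $c$ is unknotted one has $[\lambda_c] = \omega[\mu_K]$ in the homology of the link exterior, where $\omega = \lk(K,c)$; hence in the Alexander module the variable $y$ dual to $\mu_c$ is identified with $x^{n\omega}$. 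Through the Torres conditions and a Mayer--Vietoris computation for the Dehn filling, this yields $\Delta_{K_n}(t)$ as the specialization $y = t^{n\omega}$ of $\Delta_{K\cup c}$, corrected by an explicit factor coming from the filling solid torus. The first task is to pin this formula down, including its behavior along the degenerate directions where $n\omega = 0$.

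For $|\omega| \ge 1$ I would read the asymptotics of $\mathrm{breadth}_t\,\Delta_{K_n}$ off the Newton polygon of $\Delta_{K\cup c}(x,y)$. Substituting $y = t^{n\omega}$ sends each lattice point $(i,j)$ of the Newton polygon to the exponent $i + n\omega j$, so for $|n|$ large the breadth grows like a linear function of $|n|$ whose leading coefficient combines the $y$--breadth $w_y$ of $\Delta_{K\cup c}$ with a contribution proportional to $(|\omega|-1)$ coming from the correction factor. Thus $\mathrm{breadth}_t\,\Delta_{K_n} \to \infty$, and so $g(K_n) \to \infty$, unless this coefficient vanishes, which happens exactly when $|\omega| = 1$ and $w_y = 0$. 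In that degenerate case the Torres condition $\Delta_{K\cup c}(x,1) \doteq \tfrac{x^{\omega}-1}{x-1}\,\Delta_K(x)$ reads $\Delta_{K\cup c}(x,1)\doteq \Delta_K(x)$ (since $\tfrac{x^{\pm1}-1}{x-1}\doteq 1$), and independence of $y$ then forces $\Delta_{K\cup c}(x,y) \doteq \Delta_K(x)$. This yields the dichotomy between conclusions $(2)$ and $(3)$ throughout the range $|\omega|\ge 1$.

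For $\omega = 0$ I would argue more geometrically. Choose a minimal genus Seifert surface $S$ for $K$ disjoint from $c$, possible since $\lk(K,c)=0$; it remains a Seifert surface for every $K_n$, giving $g(K_n) \le g(S) = g(K)$. Moreover, twisting changes the Seifert form of $S$ only by the rank-one update $V \mapsto V + n\,vv^{\mathsf T}$, where $v_i = \lk(a_i,c)$ for a basis $\{a_i\}$ of $H_1(S)$; by the matrix determinant lemma $\Delta_{K_n}(t)$ is then affine-linear in $n$ and of bounded breadth. To upgrade the upper bound $g(K_n)\le g(S)$ to the asserted constancy, I would pass to the sutured manifold complementary to $S$, which is taut precisely because $S$ is minimal genus, and view the twistings as Dehn fillings along $c$. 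Invoking the fact that all but at most one filling of a taut sutured manifold along such a curve remains taut, $S$ is minimal genus for $K_n$ for all but at most one $n$, so $g(K_n)$ is constant except possibly at a single value where it may drop, which is conclusion $(1)$.

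The main obstacle is twofold. First, establishing the surgery formula cleanly across the degenerate directions (both $n\omega = 0$ and the case $\omega = 0$, where $\Delta_{K\cup c}(x,1)\doteq 0$) requires care, since the naive substitution breaks down and the correction factor must be identified precisely enough to control the leading coefficient of the breadth. Second, and more seriously, the exact constancy in conclusion $(1)$ cannot be extracted from the Alexander polynomial, which only bounds the genus; the crux is the sutured-manifold input that tautness, and hence minimal genus of $S$, persists under all but one twisting. Isolating that exactly one exceptional value can occur, rather than finitely many, is the most delicate point and is where I expect the real work to lie.
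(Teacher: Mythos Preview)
Your plan matches the paper's proof: the $\omega=0$ case is exactly Gabai's \cite[Corollary~2.4]{GabaiII}, and for $|\omega|\ge1$ the paper derives the same specialization formula via Torres (their equation $(\star)$ reads $\tfrac{t^\omega-1}{t-1}\Delta_{K_n}(t)=\Delta_{K\cup c}(t,t^{-n\omega})$) and then reads off genus growth from the $y$--breadth. Two points in your write-up need repair.

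For $\omega=0$, you cannot in general pick a \emph{minimal genus} Seifert surface for $K$ disjoint from $c$; you can only pick a surface $S$ disjoint from $c$ that is of minimal genus \emph{among such surfaces}, and $g(S)$ may exceed $g(K)$. Gabai's tautness-under-filling result then gives $g(K_n)=g(S)$ for all but at most one $n$, and that exceptional $n$ may well be $n=0$ itself. So drop the assertion $g(S)=g(K)$; the rest of your sutured-manifold sketch is the right mechanism (and is precisely what the paper cites).

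For $|\omega|\ge1$, the correction factor $\tfrac{t^\omega-1}{t-1}$ has fixed breadth $|\omega|-1$, independent of $n$, so it contributes only a constant offset, not a linear term in $|n|$. The leading coefficient of $\br(\Delta_{K_n})$ in $|n|$ is $|\omega|\cdot w_y$ alone, so growth fails exactly when $w_y=0$ --- for any $\omega$. What actually rules out $w_y=0$ when $|\omega|\ge2$ is the \emph{other} Torres relation, applied to the unknotted component $c$: since $\Delta_c(y)\doteq1$, one gets $\Delta_{K\cup c}(1,y)\doteq\tfrac{y^\omega-1}{y-1}$, hence $w_y\ge|\omega|-1>0$. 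The paper makes exactly this move; your argument as written does not, and without it the dichotomy between conclusions (2) and (3) is not established.
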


Here $\Delta_L$ denotes the multivariable Alexander polynomial of the link $L$ and $\doteq$ signifies equivalence up to multiplication by a unit in the corresponding Laurent polynomial ring.  
For situations such as Theorem~\ref{genera_twist}(2), 
we regard  $\Z[x^{\pm1}]$ as the natural subring of $\Z[x^{\pm 1}, y^{\pm 1}]$. 

\medskip

\begin{question}
\label{ques:meridian}
Observe that if $c$ is a meridian of $K$ then $\Delta_{K \cup c} (x,y) \doteq \Delta_{K}(x)$ and $K_n = K$ for all $n$.   
If $|\lk(K,c)| =1$, $\Delta_{K \cup c} (x,y) \doteq \Delta_{K}(x)$, 
and $g(K_n) \leq N$ for some constant $N$, 
then must $c$ be a meridian of $K$?
\end{question}

\medskip

\begin{remark}
Conclusion (2) can occur even when $c$ is not a meridian of $K$.
Figure~\ref{fig:constantalex} shows a link $K \cup c$ with $|\lk(K,c)| =1$ such that $\Delta_{K \cup c}(x,y) = 3x^{-1}-5+3x$.  
Hence  $\Delta_{K_n}(t)=3t^{-1}-5+3t$ for all integers $n$.  
Is there an upper bound on the genera of these knots? 
\end{remark} 

\begin{figure}[h]
\begin{center}
\includegraphics[width=0.25\textwidth]{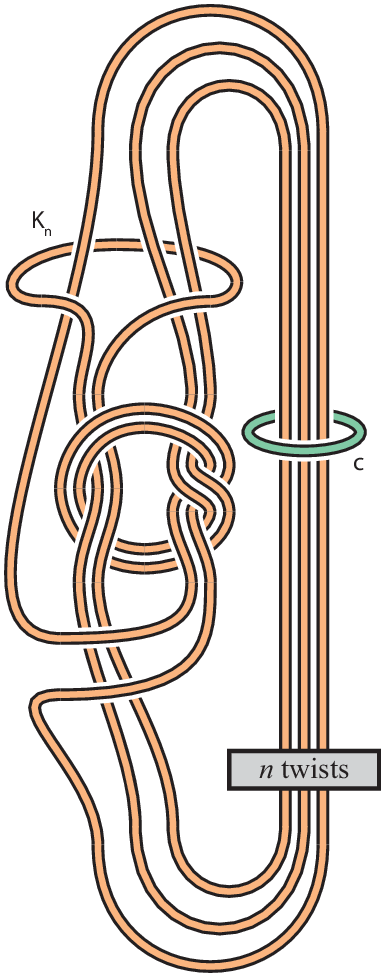}
\caption{The knots $K_n$ all have the same Alexander polynomial.}
\label{fig:constantalex}
\end{center}
\end{figure}

\medskip

\begin{remark}
Conclusion (3) with $|\lk(K,c)| =1$ does occur.   
For example, 
let us take the two-bridge link $B(18,7)$ \(which is $7_2^2$ in Rolfsen's table and $L7a5$ in Thistlethwaite's table \cite{Ro, knotatlas}\).  
Since both components are unknotted, one may choose either component to be $K$ and the other to be $c$. 
Then $|\lk(K,c)| =1$ and the multivariable Alexander polynomial is $\Delta(x,y) \doteq (x+y-1)(xy-x-y)$ \cite{linkinfo, knotatlas}.  
Since $\br_y\Delta(x, y) = 2$ (the $y$--breadth of $\Delta(x,y)$, defined below), 
the proof of Theorem~\ref{genera_twist} shows that $g(K_n) \to \infty$ as $|n| \to \infty$.  
\end{remark}

Before proving Theorem~\ref{genera_twist}, 
we prepare some notation. 
For a non-zero Laurent polynomial $p(t) \in  \Z[t^{\pm1}]$, 
its {\em breadth} $\br( p(t))$ is the difference between the minimum degree and maximum degree of $t$ in $p(t)$. 
For a non-zero Laurent polynomial $p(x,y) \in \Z[x^{\pm1},y^{\pm1}]$, 
its {\em $y$--breadth} $\br_y (p(x,y))$ is the difference between the minimum degree and maximum degree of $y$ in $p(x,y)$.  
We similarly define $\br_x(p(x,y))$.  
(The breadth, $y$--breadth, and $x$--breadth of the zero polynomial are defined to be $-\infty$.) 

Let $K$ be a knot in a homology sphere $M$ with Alexander polynomial $\Delta_K(t) \in \Z[t^{\pm1}]$ and Seifert genus $g(K)$.  
Then we have the inequality:
\[ \br (\Delta_K(t)) \leq 2g(K).\]

Let $L_1 \cup L_2$ be an oriented link in a homology sphere $M$ 
and $E$ the exterior $M - \nbhd(L_1 \cup L_2)$ of $L_1 \cup L_2$. 
The two variable Alexander polynomial of $L_1 \cup L_2$ is $\Delta_{L_1 \cup L_2} (x,y)$. 
With the Laurent polynomial ring $\Lambda = \Lambda[x^{\pm1},y^{\pm1}]$, 
this records the structure of $H_1(\widetilde{E})$ as a $\Lambda$ module with respect to the basis $ \langle [\mu_1], [\mu_2]\rangle $ of $H_1(E)$ where $\mu_i$ is an oriented meridian of $L_i$, $[\mu_1] \mapsto x$ and $[\mu_2] \mapsto y$, 
and the additive structure in $H_1(E)$ maps to the multiplicative structure in $\Lambda$  
(i.e.\  $a[\mu_1]+b[\mu_2] \mapsto x^a y^b$). 

Torres \cite{Torres} gives fundamental properties of the two-variable Alexander polynomial of an oriented link 
$L_1 \cup L_2$ with $\lk(L_1, L_2) =\omega$ and its relation to the Alexander polynomial of a component: 
\begin{align*}
\Delta_{L_1 \cup L_2}(x,y) & = x^m y^n \Delta_{L_1 \cup L_2}(x^{-1},y^{-1}) \mbox{ for some } m,n \in \Z, \tag{T1} \\
\Delta_{L_1 \cup L_2}( t, 1) &\doteq \frac{t^{\omega}-1}{t - 1} \Delta_{L_1} (t), \mbox{ and }  \tag{T2} \\
\Delta_{L_1 \cup L_2}(1,1) &= \pm \omega. \tag{T3}
\end{align*}

Note that reversing a component of an oriented link reverses the orientation of its meridian and hence inverts the corresponding variable in the Laurent polynomial ring.  
By the Torres Formula (T1), 
the Alexander polynomial of a two component link $L_1 \cup L_2$ is preserved up to equivalence upon reversing both components, 
so ostensibly a two component link has two inequivalent multivariable Alexander polynomials. 
In the following, as a matter of convenience, we choose orientations of $L_1$ and $L_2$ so that $\omega = \lk(L_1, L_2) \ge 0$.  
When $\omega=0$ we content ourselves with any choice of orientation. 

\begin{proof}[Proof of Theorem~\ref{genera_twist}]
We choose orientations of $K$ and $c$ so that $\lk(K, c) = \omega \ge 0$. 
(This choice  has no impact on the conclusions of the Theorem.) 
When $\omega=0$, 
the result follows from work of Gabai \cite[Corollary~2.4]{GabaiII}.
Henceforth assume $\omega\geq 1$.

Let $E = M - \nbhd(K \cup c)$ denote the exterior of $K \cup c$ where $M$ is the homology sphere containing $K \cup c$. 
Then $H_1(E) = \langle [\mu_K], [\mu_c]\rangle \cong \mathbb{Z} \oplus \mathbb{Z}$ 
where $\mu_K$ and $\mu_c$ are oriented meridians of $K$ and $c$ respectively. 
Let $\lambda_c$ be the preferred (oriented) longitude of $c$.  
Observe that $[\lambda_c] = \omega [\mu_K]$ in $H_1(E)$.

Now consider the family of links $K_n \cup c_n$ with exterior $E_n$ obtained by $(-\frac{1}{n})$--surgery on $c$.   
Observe that $E_n \cong E$ where $\mu_{K_n} \mapsto \mu_{K}$ and $\mu_{c_n} \mapsto \mu_c - n \lambda_c$.   
Thus, using that $[\lambda_c] = \omega [\mu_K] = \omega [\mu_{K_n}]$ 
so that $[\mu_{c}] \mapsto - n \omega[\mu_K] +[\mu_{c_n}]$ in $H_1(E)$, 
we have

 \[\Delta_{K_n \cup c_n}(x_n, y_n) = \Delta_{K \cup c}(x_n, x_n^{n \omega} y_n).\]

Applying the Torres Formula (T2) and the preceding equation, we obtain:

\[
\frac{t^{\omega}-1}{t - 1} \Delta_{K_n}(t) \doteq \Delta_{K_n \cup c_n}(t,1) = \Delta_{K \cup c}(t,t^{n \omega}). \tag{$\star$}
\]

Since $\omega \geq 1$,  
we have 
\[ 2g(K_n)  \geq \br( \Delta_{K_n}(t)) =  \br( \Delta_{K \cup c}(t,t^{n \omega}))- (\omega -1). \tag{$\star\star$} 
\]
Thus the genus of $K_n$ will eventually increase with $|n|$ provided that we have $ \br_y( \Delta_{K \cup c} (x,y)) >0$. 

Since $c$ is the unknot, $\Delta_c(y) =1$.  
Therefore
\[ \Delta_{K \cup c}(1,y) \doteq \frac{y^\omega - 1}{y-1} \Delta_c(y) =  \frac{y^\omega - 1}{y-1}\]
and thus $\Delta_{K \cup c} (x,y)$ has positive $y$--breadth when $\omega \geq 2$.   
Hence conclusion (3) holds when $\omega \geq 2$.

If $\omega=1$, 
then $\Delta_{K \cup c}(1,y) \doteq \Delta_c(y)=1$, 
which implies that $\Delta_{K \cup c}(x,y) \neq 0$. 
However, if $\br_y (\Delta_{K \cup c}(x,y))= 0$, 
then $\Delta_{K \cup c}(x,y)$ is expressed as $f(x)y^k$ for some polynomial $f(x)$ and integer $k$, 
and hence $\Delta_{K \cup c}(x,y) \doteq f(x)$. 
Therefore 
($\star$) implies that $\Delta_{K \cup c}(x,y) \doteq \Delta_{K \cup c}(x,1) = \Delta_K(x)$
and moreover that $\Delta_{K_n}(x) = \Delta_{K_n \cup c_n}(x, 1) 
= \Delta_{K \cup c}(x, x^{nw}) 
\doteq \Delta_{K \cup c}(x, 1) = \Delta_K(x)$
for all $n \in \Z$.
Thus if $\omega=1$, 
then either  $\Delta_{K \cup c}(x,y) \doteq \Delta_K(x)\doteq \Delta_{K_n}(x)$ for all $n \in \Z$ and conclusion (2) holds or  $\br_y (\Delta_{K \cup c}(x,y)) >0$ and conclusion (3) holds.
\end{proof}

\begin{lemma} 
\label{lem:parity}
Let $L_1 \cup L_2$ be an oriented link with $\lk(L_1, L_2) = \omega > 0$. 
Then, working $\mod 2$, we have   
\[ \br_x(\Delta_{L_1 \cup L_2}(x,y)) \equiv_2 \br_y(\Delta_{L_1 \cup L_2}(x,y)) \equiv_2 \omega-1.\]
\end{lemma}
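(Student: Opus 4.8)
The plan is to extract the parity of $\br_x$ and $\br_y$ directly from the Torres conditions, using symmetry to pin down the leading and trailing exponents and the specialization (T2) to detect whether those extreme coefficients survive. First I would invoke the symmetry relation (T1), $\Delta_{L_1 \cup L_2}(x,y) \doteq x^m y^n \Delta_{L_1 \cup L_2}(x^{-1}, y^{-1})$, which says that the Newton polygon of $\Delta_{L_1 \cup L_2}$ is centrally symmetric. In particular, reflecting through the center sends the $y$-degree $d_{\max}$ to $n - d_{\min}$ and vice versa, so the extreme-$y$ coefficients are paired up; the key point is that this symmetry identifies the top-$y$ and bottom-$y$ edge polynomials (as Laurent polynomials in $x$) up to the reflection $x \mapsto x^{-1}$ and a unit.

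The heart of the argument is the specialization (T2), $\Delta_{L_1 \cup L_2}(t,1) \doteq \tfrac{t^\omega - 1}{t-1}\,\Delta_{L_1}(t)$. Setting $y = 1$ collapses the $y$-grading and sums, for each power of $x = t$, the coefficients lying in that vertical strip of the Newton polygon. I would compute the breadth of the right-hand side: since $\tfrac{t^\omega-1}{t-1} = 1 + t + \cdots + t^{\omega-1}$ has breadth $\omega - 1$ and $\Delta_{L_1}(t)$ is a symmetric Alexander polynomial of even breadth, the product has breadth $\equiv_2 \omega - 1$. Provided the specialization $y = 1$ does not cause the extreme-$x$ coefficients to cancel, we get $\br_x(\Delta_{L_1\cup L_2}(x,y)) = \br(\Delta_{L_1\cup L_2}(t,1)) \equiv_2 \omega - 1$. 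By the evident symmetry of the statement in $x$ and $y$ — applying the same reasoning after swapping the roles of the two components — the same congruence holds for $\br_y$, giving the asserted common value $\omega - 1 \pmod 2$.

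The main obstacle is precisely controlling this potential cancellation at the extreme $x$-powers under the specialization $y = 1$: a priori the sum of the coefficients in the leftmost (or rightmost) vertical strip of the Newton polygon could vanish, which would make $\br(\Delta(t,1))$ strictly smaller than $\br_x(\Delta(x,y))$ and break the parity count. I would handle this by working modulo $2$ throughout, where (T1) forces the Newton polygon to be symmetric and the extreme strips to be reflections of one another. The cleanest route is to evaluate a suitable second specialization or to use (T3), $\Delta_{L_1\cup L_2}(1,1) = \pm\omega$, together with the mod-$2$ symmetry to argue that the parity of the number of lattice points (equivalently, the parity of the breadth) in each extreme column is forced, so that the extreme coefficient sums are themselves determined mod $2$ and the breadths of $\Delta(x,y)$ and $\Delta(t,1)$ agree in parity even if they differ as integers.

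An alternative, perhaps safer, framing is to avoid cancellation entirely by reducing everything mod $2$ from the start: regard $\Delta_{L_1\cup L_2}$ as an element of $\mathbb{F}_2[x^{\pm1}, y^{\pm1}]$, where (T2) reads $\Delta(t,1) \equiv (1 + t + \cdots + t^{\omega-1})\,\Delta_{L_1}(t) \pmod 2$. Over $\mathbb{F}_2$ the symmetry (T1) still holds and the breadth is governed by the support of the polynomial; I would then argue that passing to $y=1$ preserves the parity of the $x$-breadth because the mod-$2$ symmetry prevents the two extreme columns from simultaneously summing to zero. This reduces the lemma to the elementary breadth computation for $(1 + \cdots + t^{\omega-1})\Delta_{L_1}(t)$ noted above, and the symmetric statement for $\br_y$ follows by interchanging $L_1$ and $L_2$.
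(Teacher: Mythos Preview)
Your overall plan matches the paper's: use the Torres symmetry (T1) together with the specialization (T2) to compare $\br_y(\Delta(x,y))$ with $\br(\Delta(1,y))$, and then read off the parity from the right-hand side of (T2). The gap is in the step you yourself flag as ``the main obstacle'': controlling the possible drop in breadth under the specialization. Your proposed fixes do not work. In particular, the assertion that ``the mod-$2$ symmetry prevents the two extreme columns from simultaneously summing to zero'' is backwards. Writing $\Delta(x,y)=\sum_{i=0}^{n} a_i(x)\,y^i$ with $n=\br_y$, (T1) gives $a_i(x)=x^m a_{n-i}(x^{-1})$ and hence $a_i(1)=a_{n-i}(1)$. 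So if the top coefficient $a_n(1)$ vanishes, the bottom coefficient $a_0(1)$ vanishes too; symmetry forces simultaneous cancellation, not prevents it. Passing to $\mathbb{F}_2$ changes nothing here, and (T3) gives only the total evaluation at $(1,1)$, which does not isolate an extreme column.

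The clean argument, which you are circling around, is this: the relation $a_i(1)=a_{n-i}(1)$ says precisely that $\Delta(1,y)=\sum a_i(1)\,y^i$ is palindromic about $y^{n/2}$. For a palindromic polynomial with outer span $n$, the nonzero support is symmetric, so its actual breadth is $n-2k$ for some $k\ge 0$; in particular $\br(\Delta(1,y))\equiv n = \br_y(\Delta(x,y)) \pmod 2$. Now (T2) gives $\br(\Delta(1,y))=\br(\Delta_{L_2}(y))+(\omega-1)\equiv \omega-1\pmod 2$, since the Alexander polynomial of a knot has even breadth. This is exactly the paper's proof; once you state the palindromic observation, the cancellation issue evaporates and no appeal to $\mathbb{F}_2$ or (T3) is needed.
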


\begin{proof}
This is an application of the Torres Formulas.   
First observe that $\Delta_{L_1 \cup L_2}(x,y) \neq 0$ by (T3) because $\omega>0$.  
Hence the $y$--breadth of $\Delta_{L_1 \cup L_2}(x,y)$ is a non-negative integer. 
If $\br_y(\Delta_{L_1 \cup L_2}(x,y)) =n$, 
then by multiplying by powers of $x$ and $y$ we may write 
$\Delta_{L_1 \cup L_2}(x,y) = \sum_{i=0}^{n} a_i(x) y^i$, 
where $a_{0}(x) \neq 0$ and $a_n(x) \neq 0$ (and possibly $0=n$).    
Then by (T1) we have

\begin{align*}
\sum_{i=0}^{n} a_i(x) y^i &=\Delta_{L_1 \cup L_2}(x,y)\\
&=x^m y^n \Delta_{L_1 \cup L_2}(x^{-1},y^{-1})\\
 &= x^m y^n \sum_{i=0}^{n} a_i(x^{-1}) y^{-i}\\
&= x^m  \sum_{i=0}^{n} a_i(x^{-1}) y^{n-i}  \\
&= x^m  \sum_{i=0}^{n} a_{n-i}(x^{-1}) y^{i} 
\end{align*}
so that $a_i(x) = x^m a_{n-i}(x^{-1})$.  
Hence $a_i(1)=a_{n-i}(1)$, 
and therefore $\br_y (\Delta_{L_1 \cup L_2}(x,y)) \equiv_2 \br(\Delta_{L_1 \cup L_2} (1,y))$.
By (T2), $\br(\Delta_{L_1 \cup L_2}(1,y)) = \br(\Delta_{L_2}(y)) + \omega-1$.   
Since the breadth of the Alexander polynomial of a knot is always even,  
$\br(\Delta_{L_1 \cup L_2}(1,y)) \equiv_2 \omega-1$.  
Thus $\br_y (\Delta_{L_1 \cup L_2}(x,y)) \equiv_2 \omega-1$.   

A similar proof shows $\br_x (\Delta_{L_1 \cup L_2}(x,y)) \equiv_2 \omega-1$.
\end{proof}

\medskip

\begin{propositionconstantHFK}
Let $\{(K_n,r_n)\}$ be a twist family obtained by twisting $(K,r)$ along an unknot $c$ with $|\lk(K, c)| = 1$. 
If this family contains infinitely many L-space surgeries,  
then 
\begin{enumerate}
\item $\Delta_{K \cup c} (x,y) \doteq \Delta_{K}(x) \doteq \Delta_{K_n}(x)$ for all $n \in \Z$,
\item $\HFK(K_n) \cong \HFK(K_N)$ for infinitely many integers $n$, and in particular
\item $g(K_n) = g(K_N)$ for infinitely many integers $n$.  
\end{enumerate}
\end{propositionconstantHFK}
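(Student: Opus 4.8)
The plan is to use the hypothesis of infinitely many L-space surgeries to rule out conclusion (3) of Theorem~\ref{genera_twist}, thereby forcing conclusion (2) and establishing part (1); parts (2) and (3) then follow from the resulting constancy of the Alexander polynomial together with the fact that the Alexander polynomial of an L-space knot determines its $\HFK$. To set up, I would assemble the two genus estimates valid at each index $n$ for which $(K_n,r_n)$ is an L-space surgery. Since $|\omega| = 1$ where $\omega = \lk(K,c)$, Remark~\ref{linear bound} gives the upper bound $g(K_n) \le \tfrac12(1 + |r_0 + n|)$, which grows like $\tfrac12|n|$, while the inequality $(\star\star)$ from the proof of Theorem~\ref{genera_twist}, specialized to $\omega = 1$, gives the lower bound $2g(K_n) \ge \br(\Delta_{K\cup c}(t,t^{-n}))$.

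The key step is to compare the growth rates of these two bounds. Suppose conclusion (2) of Theorem~\ref{genera_twist} fails, so that conclusion (3) holds and $\br_y(\Delta_{K\cup c}(x,y)) > 0$; by Lemma~\ref{lem:parity} with $\omega = 1$ this $y$-breadth is even, hence at least $2$. Writing $\Delta_{K\cup c}(x,y) = \sum_{i,j} c_{ij}x^iy^j$ and substituting $y = t^{-n}$, for $|n|$ large the extreme powers of $t$ are realized by the unique monomials of extreme $y$-degree, so no cancellation occurs at either end and $\br(\Delta_{K\cup c}(t,t^{-n})) = |n|\,\br_y(\Delta_{K\cup c}(x,y)) + O(1) \ge 2|n| + O(1)$. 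Thus $g(K_n) \ge |n| + O(1)$, which for large $|n|$ exceeds the upper bound $\tfrac12|n| + O(1)$. Since the family contains L-space surgeries at arbitrarily large $|n|$, this is a contradiction. Therefore conclusion (2) holds, and exactly as in the last paragraph of the proof of Theorem~\ref{genera_twist} it follows that $\Delta_{K\cup c}(x,y) \doteq \Delta_K(x) \doteq \Delta_{K_n}(x)$ for all $n \in \Z$, which is part (1).

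For parts (2) and (3) I would pass to the infinitely many indices $n$ with $(K_n,r_n)$ an L-space surgery and, among these, to the infinitely many sharing a common sign of $r_n$; for this subfamily each $K_n$ is an L-space knot of one fixed sign. By part (1) all of these have Alexander polynomial $\doteq \Delta_K$, so since the Alexander polynomial of an L-space knot determines $\HFK$ \cite[Theorem~1.2]{OS3}, fixing any index $N$ in this subfamily yields $\HFK(K_n) \cong \HFK(K_N)$ for all of them, proving (2). Part (3) is then immediate, since $\HFK$ detects the Seifert genus, so isomorphic knot Floer homology groups have equal genera and $g(K_n) = g(K_N)$ for these infinitely many $n$.

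The only genuinely new ingredient, and the step I expect to require the most care, is the growth-rate comparison: one must verify that the substitution $y \mapsto t^{-n}$ turns positive $y$-breadth into one-variable breadth growing linearly in $|n|$ with slope exactly $\br_y$, with no cancellation at the extremes, and crucially that the parity constraint of Lemma~\ref{lem:parity} forces this slope to be at least $2$. Without the parity input the borderline value $\br_y = 1$ would match the slope $\tfrac12$ of the L-space genus bound and the contradiction would fail, so it is precisely the evenness of $\br_y$ when $\omega = 1$ that makes the argument close.
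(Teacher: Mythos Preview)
Your proposal is correct and follows essentially the same approach as the paper. The paper likewise combines the L-space genus bound $r_n \geq 2g(K_n)-1$ with the inequality $(\star\star)$, computes that $\br(\Delta_{K\cup c}(t,t^{-n\omega}))$ grows like $n\omega\ell$ for large $n$ (where $\ell=\br_y$), derives $\omega \geq \ell$, and then invokes Lemma~\ref{lem:parity} to force $\ell=0$ when $\omega=1$; the only cosmetic difference is that the paper first mirrors to reduce to infinitely many positive L-space surgeries with $n\to+\infty$, whereas you handle both directions simultaneously via $|n|$.
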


\medskip

\begin{remark}
Of course, as in Question~\ref{ques:meridian}, 
we know of no examples of twist families that satisfy all the hypotheses of Proposition~\ref{prop:constantHFK} for which $c$ is not a meridian of $K$.
\end{remark}

\begin{proof}

Note that the assertion of the proposition holds for $\{ (K_n, r_n) \}$ if and only if that holds for 
the family $\{ (K^*_{-n}, -r_n) \}$ obtained by taking mirrors. 
So we may assume that there is an integer $N>0$ such that $(K_n, r_n)$ is an L-space surgery for infinitely many $n \geq N$. 
In the following we choose orientations of $K$ and $c$ so that $\omega = \lk(K, c) = 1$. 
Then, since $r_n = r_0+n$, 
by increasing $N$ if necessary we may assume $r_n > 0$ so that 
$K_n$ is a positive L-space knot for infinitely many $n \geq N$.

Since $K_n$ is a positive L-space knot, 
then $r_n \geq 2g(K_n)-1$ \cite{OS4}.
Then equation ($\star\star$) above (with $\omega = 1$) yields
\[ 
r_0+n \geq \br(\Delta_{K \cup c}(t,t^{n})) - 1.   
\]

Recall that, as in the proof of Lemma~\ref{lem:parity}, if $\br_y(\Delta_{K \cup c}(x,y))=\ell$, 
then we may write $\Delta_{K \cup c}(x,y)=\sum_{i=0}^{\ell} a_i(x) y^i$ 
where the $a_i(x)$ are polynomials such that $a_0(x) \neq 0$, $a_\ell(x) \neq 0$, 
and $x^k a_i(x^{-1}) = a_{\ell-i}(x)$  for all $i$ for some integer $k$.  

If $\ell = 0$, 
then $\br(\Delta_{K \cup c}(t,t^{n})) = \br a_0(t)$, 
which is constant, the difference between $\deg a_0(t)$ and the smallest exponent of $t$ occurring in $ \deg a_0(t)$. 
If $\ell > 0$,  
then $\Delta_{K \cup c}(t, t^{n}) 
= a_0(t) + \cdots + a_{\ell}(t)t^{n\ell}$
and, for sufficiently large $n (\ge N)$,  
$\br(\Delta_{K \cup c}(t,t^{n}))  = n \ell + C$ 
where $C$ is the difference between $\deg a_\ell(t)$ and the smallest exponent of $t$ occurring in $ \deg a_0(t)$.  
So the formula works for $\ell \geq 0$. 
Thus the inequality above becomes

\[ n(1-\ell) \geq C-1-r_0.\]

For this inequality to be true for sufficiently large $n \ge N$,
we must have $ 1-\ell \geq 0$, i.e.\ $0 \leq \ell \leq 1$. 
In particular, 
since $\omega = 1$ and $\ell$ do not have the same parity by Lemma~\ref{lem:parity}, 
$\ell=\br_y(\Delta_{K \cup c}(x,y))=0$. 
This implies $\Delta_{K \cup c} (x,y) \doteq \Delta_{K}(x)$ and thus, 
as in the proof of Theorem~\ref{genera_twist}, 
$\Delta_{K}(x) \doteq \Delta_{K_n}(x)$ for all $n \in \Z$, giving (1).

Since Alexander polynomials of positive L-space knots determine their $\HFK$, 
(2) now follows from (1) and the hypothesis that the twist family contains infinitely many L-space knots.
Since knot Floer homology detects genus \cite[Theorem~1.2]{OS_genus}, (3) follows from (2). 
\end{proof}

\bigskip

\section{L-space knots in twist families with linking number zero}
\label{sec:link=0}

As shown in Theorem~\ref{genera_twist},  
twisting $K$ along an unknotted circle $c$ with $\lk(K, c) = 0$, 
we obtain an infinite family of knots of bounded genus. 
If this family contains infinitely many L-space knots, 
Conjecture \ref{conj:genus} turns out to be not true. 
However Theorem~\ref{link=0} below, 
which follows from Theorem~\ref{link=0tightfiber} and the fact that an L-space knot or its mirror is a tight fibered knot \cite[Corollary~1.4 and Proposition~2.1]{Hed_positive},  
excludes this possibility.
(For convenience, we say a fibered knot whose associated open book decomposition supports the positive tight contact structure on $S^3$ is a {\em tight fibered knot}.)

\medskip

\begin{thmLink0}
Let $\{ K_n \}$ be a twist family of knots obtained by twisting $K$ along $c$.  
If $\lk(K,c) = 0$, 
then $K_n$ is an L-space knot for at most three integers $n$. 
Furthermore, if $K_m$ and $K_n$ are L-space knots, then $|m - n| \le 2$.
\end{thmLink0}

\begin{proof}
By Ni \cite{Ni, Ni2} (cf.\ \cite{Ghi, Juh}), 
if $K$ is an L-space knot, then $K$ is a fibered knot.  
If $K$ is an L-space knot with a positive L-space surgery, 
then $g(K)=\tau(K)$ \cite{OS3} (see also \cite[Corollary~1.4]{Hed_positive}) and the open book decomposition associated to $K$ 
supports the (positive) tight contact structure on $S^3$  \cite[Proposition~2.1]{Hed_positive}.  
That is, $K$ is a tight fibered knot.
Similarly, if $K$ is an L-space knot with a negative L-space surgery, 
then the mirror of $K$ is a tight fibered knot. 
The result now follows from Theorem~\ref{link=0tightfiber} below.
\end{proof}

\medskip

\begin{theorem}
\label{link=0tightfiber}
Let $\{ K_n \}$ be a twist family of knots obtained by twisting $K$ along $c$.  
If $\lk(K,c) = 0$, 
then $K_n$ or its mirror is a tight fibered knot for at most three integers $n$. 
Furthermore, if $K_m$ and $K_n$ are two such knots, then $|m - n| \le 2$.
\end{theorem}

\begin{proof}
If for any integer $n$, 
neither $K_n$ nor its mirror is a tight fibered knot, 
then there is nothing to prove.  
So we may assume,  if necessary by a reparametrization, 
that $K = K_0$ and either $K$ or its mirror is a tight fibered knot.  
It follows from \cite[Corollary~2.4]{GabaiII} that $K$ has a Seifert surface 
$F \subset E(K) = S^3 - \nbhd(K)$ which is disjoint from $c$ so that 
$g(K_n) \le g(F)$ with equality for all but at most one integer $n$, say $n_0$.  
(Cf.\ Theorem~\ref{genera_twist}(1).)
In particular, 
the image of $F$ under $(-\frac{1}{n})$--surgery on $c$ gives a minimal genus Seifert surface for $K_n$ in those cases of equality.

\medskip

\noindent
\textit{Case I}.  
$g(K) = g(F)$, i.e.\ $F$ is a fiber surface of $K$. 

Since $F$ is a fiber surface, by cutting the exterior $E(K)$ along $F$ one obtains a product manifold $F \times [0, 1]$.
Assume that $K_n$ (and hence its mirror) is also a fibered knot for some  integer $n \ne 0, n_0$.
Thus $K_n$ is a fibered knot with $g(K_n) = g(F)$, 
and since a fiber surface for a fibered knot is unique up to isotopy (e.g.\ \cite[Lemma 5.1]{EL} or \cite{Thu}),  
$F$ becomes a fiber surface $F_n$ of $K_n$ after $(-\frac{1}{n})$--surgery on $c$. 
Hence $(-\frac{1}{n})$--surgery on $c$ takes the exterior of $K \cup F$ to the exterior of $K_n \cup F_n$; 
i.e.\ this is a cosmetic surgery of $F \times [0, 1]$ such that  $ F \times \bdry[0,1]$ is preserved.  
Then Ni \cite[Theorem~1.1]{Ni3} shows that $c$ may be isotoped so that in the projection $\pi : F \times [0, 1] \to F$, 
either (i) the projection of $c$ has no crossings,  
or (ii) the projection of $c$ has just one crossing. 

The immersed annulus $\pi^{-1}(\pi(c))$ intersects $\bdry N(c)$ in two longitudes and two meridians for each crossing of $\pi(c)$. 
The slope of these longitudes is referred to as the {\em blackboard framing}.

\medskip

Assume first that the situation (i) happens. 
Let us assume that $K_n$ is a fibered knot as above for two integers $n = n_1, n_2$ other than $0$ and $n_0$. 
Then we have: 

\medskip

\begin{lemma}
\label{framing}
The blackboard framing is the preferred longitude of $c$.
\end{lemma}

\begin{proof}
Let $\gamma$ be the blackboard framing of $c$. 
Then $\gamma = x \mu + \lambda$ for some integer $x$, 
where $(\mu, \lambda)$ is a preferred meridian longitude pair of $c$ in $S^3$. 
By \cite[Theorem~1.1]{Ni3} the distance between the surgery slope $-\frac{1}{n}$ and $\gamma$ is one. 
Thus $| 1 + nx | = 1$ for the nonzero integers $n = n_1, n_2$. 
This then implies $x = 0$, i.e.\ $\gamma = \lambda$. 
\end{proof}

\medskip

Now isotope $c$ into the fiber surface $F$ for $K$; 
we continue to use the same symbol $c$ to denote the isotoped one. 
Then $c$ is essential in $F$, 
for otherwise, $c$ bounds a disk disjoint from $K$, contradicting the assumption. 
Since $c \subset F$ is unknotted in $S^3$ and its framing by $F$
is its preferred longitude (Lemma~\ref{framing}), 
$c$ is a ``twisting loop" as in \cite[Definition~2.1]{Y}. 

An essential loop $c$ in a surface $F$ is called {\em isolating} if it is the only boundary component of a connected subsurface of $F$. 
We note that  \cite[Theorem~1.1(2)]{Y} is missing the hypothesis that the twisting loop is {\em non-isolating} which is necessary for its proof.   
To apply this theorem, 
we need the following lemma.

\medskip

\begin{lemma}
\label{lem:isolatingloop}
If there is a twisting loop in an embedded surface $F\subset S^3$, then there is a twisting loop in $F$ that is non-isolating.
\end{lemma}

\begin{proof}
Assume there is an isolating twisting loop in a surface $F$. 
Among such loops, let $c$ be one that bounds the smallest genus of subsurfaces.  
Let $F_c$ be the subsurface bounded by $c$.  
Among disks that $c$ bounds, let $D$ be one that intersects $F_c$ transversally and minimally.  
Since the framing of $c$ by $F$ and $D$ agree, 
minimality ensures that $\mathrm{int}D \cap F_c = \emptyset$ in a neighborhood of $c$. 
If $\mathrm{int}D \cap F_c$ is not empty, 
then it consists of simple closed curves whose framings by $D$ and $F_c$ agree. 
Of these curves, let $c'$ be an innermost one in $D$. 
If $c'$ is parallel to $c$ in $F_c$, 
we can find another disk bounded by $c$ which intersects $F_c$ in fewer components, contradicting the assumption. 
By the minimality assumption (of genus of subsurfaces), 
$c'$ must be non-isolating in $F_c$ and hence in $F$.  
Since it bounds a subdisk of $D$, it is also a twisting loop.  

On the other hand, if $\mathrm{int}D \cap F_c$ is empty, 
then $D \cup F_c$ is a closed surface of positive genus which must compress in $S^3$.  
Let $D'$ be a compressing disk for $D \cup F_c$ which we may take to be disjoint from $D$.  
Then the framing of $\bdry D'$ by $D'$ and $F_c$ agree.  
Since $\bdry D'$ is an essential curve in $F_c$ and hence also in $F$, it is a twisting loop.  
Because $D'$ is a compressing disk for $D \cup F_c$, $\bdry D'$ is not parallel to $\bdry F_c$.
Therefore, by the minimality assumption (of genus of subsurfaces), 
$\bdry D'$ is non-isolating in $F_c$ and in $F$.
\end{proof}

\medskip

Then it follows from Lemma~\ref{lem:isolatingloop} and \cite[Theorem~1.1]{Y} that 
any contact structure supported by the open book with page $F$ will be overtwisted.   
Similarly, since the mirror of $F$ also contains a twisting loop, the mirror of $c$, 
any contact structure it supports will also be overtwisted. 
This contradicts our choice of $K = K_0$.
(Indeed, one may show that in the supported contact structures the non-isolating twisting loop can be isotoped to a Legendrian unknot that bounds an overtwisted disk.)  
Hence $K_n$ can be fibered for at most one integer $n_1$ ($\ne 0, n_0$), 
where $K_0$ or its mirror is a tight fibered knot. 
Therefore there are at most two non-zero integers $n_0$ and $n_1$ 
such that $K_0, K_{n_0}$ and $K_{n_1}$, 
or their mirrors are tight fibered knots in the family $\{ K_n \}$.

\begin{remark}
\label{Stallings twist}
Let us drop the condition of $K_0$ being a ``tight" fibered knot for the moment. 
Then our argument shows that if $K_n$ is a fibered knot for at least two integers $n_1, n_2$ \(other than $0, n_0$\), 
then $c$ is a curve in $F$ along which one may do a ``Stallings twist'' \cite{Sta}.  
It then further follows that for every member of the twist family $\{K_n\}$ the knot $K_n$ is fibered with fiber $F_n$ in which $c$ continues to be a twisting loop.  
\end{remark}

\medskip

Next assume that the situation (i) does not occur. 
Then we must have the situation (ii).   
Recall that $K=K_0$ is assumed to be a fibered knot.
First we observe that $F$ is incompressible in $E(K_n)$ for all integers $n$. 
It is sufficient to show that $F = F \times \{ 0 \}$ and $F = F \times \{ 1 \}$ remain incompressible in the resulting $3$--manifold $X_n$ obtained from $F \times [0, 1]$ after $(-\frac{1}{n})$--surgery on $c$ for all integers $n$. 
By symmetry, we show this only for $F = F \times \{ 0 \}$. 
Assume for a contradiction that $F = F\times \{0\}$ compresses in $X_n$ after $(-\frac{1}{n})$--surgery on $c$ for some $n$. 
Then \cite[Theorem~1.5]{Ni3} or \cite[Theorem~0.1]{ST} (see also \cite[Theorem~1.4]{Ni_fibred}) implies that the projection of $c$ has no crossings, 
contradicting the hypothesis of situation (ii). 
Next we show that there is at most one non-zero integer $n$ such that $K_n$ is also fibered. 
If $K_{n}$ is also a fibered knot for $n \neq 0$, 
then since $F$ is incompressible in $E(K_n)$ as observed above, 
$g(K_{n}) = g(F)$ and the fiber $F$ of $K$ becomes a fiber surface $F_n$ for $K_{n}$ after $(-\tfrac{1}{n})$--surgery on $c$ \cite[Lemma 5.1]{EL} (\cite{Thu}).
Thus $(-\frac{1}{n})$--surgery on $c$ is also a cosmetic surgery of $F \times [0, 1]$. 
Since the cosmetic surgery slope is exactly the blackboard framing \cite[Theorem~1.1]{Ni3}, 
this non-zero integer $n$ is unique.
Note that each projection gives the unique blackboard framing, 
i.e. a cosmetic surgery slope. 
Now we suppose that after an isotopy in $F \times [0, 1]$, 
$c$ may have another projection with exactly one crossing. 
Since its blackboard framing may be distinct from the previous one, 
 each slope is expressed as $x\mu + \lambda$ and $y\mu + \lambda$ for some  integers $x$ and $y$ using the preferred meridian longitude pair $(\mu, \lambda)$ of $c$ in $S^3$. 
Since the cosmetic surgery slope on $c$, 
which coincides with the blackboard framing,  
corresponds to a twisting, $x$ and $y$ must be $\pm 1$. 
Thus even if $c$ has multiple projections each of which has just one crossing,  
the blackboard framing is $+1$ or $-1$.
Therefore there are at most two integers $n_0, n_1$ with $\{ n_0, n_1 \} = \{ -1, 1 \}$ such that  
$K_0$, $K_{n_0}$ and $K_{n_1}$ are fibered knots in the family $\{ K_n \}$.  

\medskip

Finally let us prove that if $K_m$ and $K_n$ or their mirrors are tight fibered knots, 
then $|m - n| \le 2$. 
We reparametrize the family $\{ K_n \}$ so that $K = K_0$ or its mirror is a tight fibered knot as above and then take a closer look at the values $n_0$ and $n_1$.

First we assume the situation (i) happens; 
hence the projection of $c$ in $F \times I$ to $F$  has no crossings.  

Recall that $(-\frac{1}{n})$--surgery on $c$ compresses $F$ for at most one integer $n$ which we denote as $n_0$ should it exist;  
such a surgery is a $\bdry$--reducing surgery in $F \times I$. 
Since the blackboard framing of $c$ is the only slope of a Dehn surgery on $c$ in which $F$ compresses 
(see \cite[Theorem~1.5]{Ni3} or \cite[Theorem~0.1]{ST}), 
the slope $-\frac{1}{n_0}$ must be the blackboard framing.  
Hence if  $(-\frac{1}{n_0})$--surgery on $c$ compresses $F$, then $-\frac{1}{n_0}$ is an integer, and thus $n_0 = \pm 1$. 

Recall that $(-\frac{1}{n_1})$--surgery is a cosmetic surgery of $F \times [0, 1]$ and $n_1 (\ne 0)$ satisfies $|1+ n_1x| =1$ for some integer $x$. 
If $x = 0$, then the blackboard framing is the preferred longitude of $c$ (so there is no $n_0$ for which $(-\frac{1}{n_0})$--surgery on $c$ compresses $F$)
and  neither $K_0$ nor its mirror is a tight fibered knot; see the argument just before Remark~\ref{Stallings twist}.
This contradicts the assumption. 
So $x \ne 0$ and the equality implies $n_1 = \pm 1, \pm 2$. 
Summarizing, we see that if $K_n$ or its mirror is a tight fibered knot, then $n \in \{ -2, -1, 0, 1, 2\}$. 
If none of $K_{-2}$, $K_2$, or their mirrors are tight fibered knots, 
then $K_n$ or its mirror can be a tight fibered knot for at most three integers $n = -1, 0, 1$ providing the desired result.
Suppose that $K_2$ or its mirror is a tight fibered knot. 
Since $-\frac{1}{2}$ is not the slope of a $\partial$--reducing surgery, 
$g(K_2) = g(F)$ and we can replace $K_2$ with $K = K_0$ by reparametrization and apply the same argument to conclude that 
if $K_n$ or its mirror is a tight fibered knot, then $n \in \{ 0, 1, 2, 3, 4 \}$. 
Taking the previous restriction, we have only three integers $n = 0, 1, 2$ for which $K_n$ or its mirror can be a tight fibered knot. 
In the case where $K_{-2}$ or its mirror is a tight fibered knot, 
a similar argument shows that $K_n$ or its mirror can be a tight fibered knot for at most three integers $n = -2, -1, 0$. 
It follows that if $K_m$ and $K_n$ or their mirrors are tight fibered knots (without any reparametrization), then $|m - n| \le 2$. 

Suppose next that the situation (ii) does not happen, 
i.e. we have the situation (ii). 
Recall that $\{ n_0, n_1 \} = \{ -1, 1 \}$. 
Hence $K_n$ or its mirror can be a tight fibered knot for at most three integers $0, -1, 1$.  

\medskip

\noindent
\textit{Case II}.  $g(K) < g(F)$, i.e.\ 
$F$ is not a fiber surface of $K$ and the fiber surface of $K$ cannot be made disjoint from $c$. 
Then it turns out that $g(K_n) = g(F)$ for any integer $n \ne 0$ as we mentioned above. 
In this situation $n_0 = 0$ and $K (= K_0 = K_{n_0})$ or its mirror is a tight fibered knot. 
We may assume that $K_{n_1}$ or its mirror is also a tight fibered knot for some $n_1 \ne 0$, 
for otherwise $K$ is a unique knot in $\{ K_n \}$ such that $K$ itself or its mirror is tight fibered. 
Apply the argument in Case I to $K_{n_1}$ instead of $K = K_0$, 
we see that there are at most three knots including $K, K_{n_1}$ that themselves or their mirrors are tight fibered knots,
and if $K_m$ and $K_n$ are two such knots (without any reparametrization), then $|m - n| \le 2$. 
\end{proof}

\medskip

\begin{example}
\label{linking 0 twist}
Let $K \cup c$ be the Whitehead link depicted in Figure~\ref{link0twist}. 
Then the linking number between $K$ and $c$ is zero and 
the twist family $\{ K_n \}$ contains exactly two L-space knots $K = K_0$ and $K_1$.  
Even though $K_{-1}$ is also fibered, both $K_{-1}$ and its mirror support overtwisted contact structures.  
Hence $K_{-1}$ cannot be an L-space knot.
\end{example}

\begin{figure}[h]
\begin{center}
\includegraphics[width=0.25\textwidth]{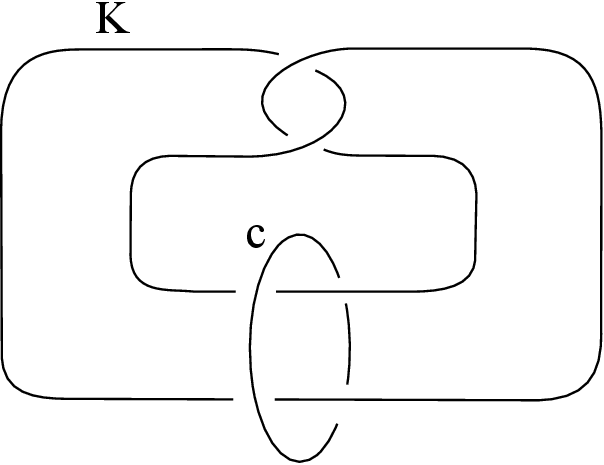}
\caption{The linking number between $K$ and $c$ is zero; $K = K_0$ is a trivial knot,  
$K_1$ is a trefoil knot, and $K_{-1}$ is the figure eight knot.}
\label{link0twist}
\end{center}
\end{figure}

\bigskip

\section{Twist families of Seifert surgeries; seiferters and pseudo-seiferters}
\label{sec:seiferter}

In this section we study when a twist family of surgeries may have a large number of Seifert surgeries without constraining the linking number $\lk(K, c)$. 
In doing so, we review and extend the foundations of \cite{DMM1}.  
Recall that the term {\em Seifert surgery} means a knot-slope pair $(K, r)$ in $S^3$ such that the result $K(r)$ of $r$--Dehn surgery on $K$ is a manifold that admits a Seifert fibration, possibly with degenerate fibers.  
If an unknot $c$ in the exterior of $K$ becomes isotopic to a fiber in a Seifert fibration of $K(r)$, 
then $c$ is called a {\em seiferter}:  
twisting the Seifert surgery $(K, r)$ along $c$ produces a $1$--parameter twist family $\{(K_n, r_n)\}$ of Seifert surgeries.   
Typically it is assumed that any disk bounded by $c$ is intersected by $K$ at least twice; 
otherwise $c$ is either split from $K$ or a meridian of $K$, sometimes called an ``irrelevant'' seiferter.  

Let $c$ be a seiferter for a Seifert surgery $(K, r)$.  
The exterior of $c$ is a solid torus $V=S^3-\nbhd(c)$ containing $K$ so that the manifold $V(K;r)$ resulting from $r$--Dehn surgery on $K$ in $V$ has a Seifert fibration.  
If $K(r)$ has a non-degenerate Seifert fibration, 
the main result of \cite{MM3} (see also \cite[Theorem 2.2]{DMM1}) 
shows that either $r \in \Z$ or $K$ is a torus knot in $V$ or a cable of a torus knot in $V$.
Hence the situation when $c$ is a seiferter for a (non-degenerate) Seifert surgery $(K, r)$ with $r \not \in \Z$ is well understood. 
Thus \cite{DMM1} focuses upon integral Seifert surgeries $(K, m)$ where $m\in \Z$.  
As we will observe in the proof of Lemma~\ref{integral}, 
even when $K(r)$ has a degenerate Seifert fibration, we see that $r \in \Z$ or $K$ is a torus knot. 
(In reference to notation for surgery slopes, we always take $m\in \Z$ while in general $r \in \Q$.)
Theorems~3.2 and 3.19 of \cite{DMM1} classify seiferters for integral Seifert surgeries $(K, m)$.

One generalization of a seiferter is that of a {\em pseudo-seiferter}, cf.\ \cite[Definition 8.4]{Mote}.  
Given a Seifert surgery $(K, r)$, 
an unknot $c$ in the exterior of $K$ is a pseudo-seiferter if $c$ is not a seiferter but $c$ is isotopic to the cable of a fiber in some Seifert fibration of $K(r)$ where the preferred longitude $\lambda$ of $c$ in $S^3$ becomes the cabling slope of $c$ in $K(r)$.  
In particular, the manifold $V(K; r)$ is a graph manifold that is the union along a torus of a Seifert fibered space $X$ and a cable space $W$; 
the slope $\lambda \subset \bdry V \subset \bdry W$ is the cabling slope of the cable space.

In the definition of a pseudo-seiferter, 
the condition that $\lambda$ becomes the cabling slope of $W$ is precisely what's needed for $W_n = W \cup _{-\frac{1}{n}} N(c)$, 
the filling corresponding to $(-\frac{1}{n})$--surgery on $c$, to be a solid torus.  
This allows the Seifert fibration of $X$ to extend to a Seifert fibration of $K_n(r_n)$.  
Hence again, twisting the Seifert surgery $(K, r)$ along $c$ produces a $1$--parameter family $\{(K_n, r_n)\}$ of Seifert surgeries.

\medskip

In the following two subsections we show that 
if a twist family of surgeries $\{(K_n,r_n)\}$ obtained from a surgery $(K, r)$ by twisting along an unknot $c$ 
contains ten Seifert surgeries, 
then
\begin{itemize}
\item (Theorem~\ref{thm:cisseiferter}) $c$ is either a seiferter or a pseudo-seiferter  and so each surgery $(K_n,r_n)$ is a Seifert surgery; 
and thence

\item (Proposition~\ref{prop:pseudoseiferterlinking}) there is no pseudo-seiferter $c$ for $(K, r)$ with $|\lk(K,c)| = 1$. 

\end{itemize}

\medskip

\begin{remark}
The Seifert fibrations in this article are permitted to have degenerate exceptional fibers.  
Do note, however, that a Seifert fibered space obtained by surgery on a knot in $S^3$ cannot have more than one degenerate fiber unless the knot is trivial and the surgery is the $0$--slope. 
See \cite[Proposition~2.8]{DMM1}.
\end{remark}

\subsection{Seifert surgeries in twist families}

Let $\{(K_n,r_n)\}$ be a twist family in $S^3$ obtained by twisting $(K,r)$ along an unknot $c$.  
Recall that $c$ neither bounds a disk disjoint from $K$ nor is a meridian of $K$. 
Let us write:  
\[\mathcal{S} = \{ n \in \mathbb{Z}\ |\ \textrm{$K_n(r_n)$ is a \(possibly degenerate\) Seifert fibered space} \}.\]
If $\mathcal{S} \ne \emptyset$, 
by reparametrization we assume $K(r)$ is a (possibly degenerate) Seifert fibered space. 

\medskip

The goal of this subsection is to prove Theorem~\ref{thm:cisseiferter}, 
though phrased slightly differently for its presentation here.  

\begin{thmSeiferter}
If $|\mathcal{S}| > 9$, 
then $c$ is either a seiferter or a pseudo-seiferter for $(K, r)$ and $\mathcal{S} = \Z$. 
\end{thmSeiferter}

\begin{proof}  
By the Inheritance Property \cite[Proposition~2.6]{DMM1}, 
$c$ is a seiferter (or a pseudo-seiferter) for $(K, r)$ if and only if 
$(K_n, r_n)$ is a Seifert surgery for which $c$ remains a seiferter (or a pseudo-seiferter) for any $n \in \mathbb{Z}$. 
So showing that  $c$ is a seiferter or pseudo-seiferter implies that $\mathcal{S} = \Z$.   
Hence we assume that $|\mathcal{S}| > 9$  and aim to show that $c$ is a seiferter or pseudo-seiferter.

Let $V = S^3 - \nbhd(c)$ be the solid torus exterior of $c$ which contains the knot $K$, 
and let $V(K; r) = K(r) - \nbhd(c)$.  
Use the preferred meridian-longitude slopes $\mu$ and $\lambda$ for $\bdry N(c)$ to parametrize slopes in both 
$\bdry V$ and $\bdry V(K; r)$.  
Then observe that $K_n(r_n)$ is the result of filling $V(K; r)$ along the slope $\mu - n \lambda$, 
i.e.\ $K_n(r_n) = V(K; r) \cup _{-\frac{1}{n}} N(c)$. 

Scharlemann's \cite{Sch} strengthening of Gabai's work on surgeries on knots in solid tori \cite{Gabai_solidtorus} shows that either 

\begin{enumerate}
\item $V(K; r)$ is a solid torus (and so either $K$ is a $0$--bridge braid in $V$ or $K$ is a $1$--bridge braid in $V$, 
see also \cite{Berge1});
\item $V(K; r) \cong W \# L(p,q)$, $K$ is a $(p,q)$--cable knot in $V$, $p \geq 2$, 
 $r$ is the cabling slope of $K$, and $W$ is some $3$--manifold with $\partial W = \partial V$; or
\item $V(K; r)$ is irreducible and $\bdry$--irreducible.
\end{enumerate}

Since $K_n(r_n) = V(K; r) \cup_{-\frac{1}{n}} N(c)$ is a Seifert fibered space for more than nine integers $n$, 
$V(K; r)$ is not hyperbolic \cite[Theorem~1.2]{LM}, cf.\ \cite{A}.\footnote{Indeed, 
Thurston's Hyperbolic Dehn Surgery Theorem \cite{T1, T2, BePe, PetPorti, BoileauPorti} implies that if $K_n(r_n)$ is not hyperbolic for infinitely many $n$, then $V(K;r)$ is not hyperbolic.  
However explicit bounds have been obtained on the number of non-hyperbolic fillings a hyperbolic manifold may have.  
While \cite{LM} determines the optimal bound for hyperbolic manifolds with one cusp, 
as suggested by \cite{A} it is conceivable fewer Seifert fibered fillings are needed for our particular situation.
Our argument also requires a bound for filling multiple cusps, 
in which case the distance between two non-hyperbolic filling is less than or equal to $8$; 
see \cite[Table~2.1]{Go_small}.}
Therefore $V(K;r)$ is either reducible, $\bdry$--reducible, Seifert fibered (with non-degenerate Seifert fibration), or toroidal.
If $V(K;r)$ is Seifert fibered, then $c$ is a seiferter; so let us assume $V(K;r)$ is not Seifert fibered.  
If $V(K;r)$ is $\bdry$--reducible but not reducible, then it is a solid torus and hence Seifert fibered.  
Thus we have two cases to consider: 
Either 
\begin{itemize}
\item[Case I:] $V(K;r)$ is reducible (as in (2) above), or
\item[Case II:]  $V(K;r)$ is toroidal, irreducible, $\bdry$--irreducible, and not Seifert fibered.
\end{itemize}

\medskip

\noindent
{\em Case I: $V(K;r)$ is reducible.}\\
If $V(K;r)$ is reducible, it has a lens space summand $L(p,q)$ with $p \geq 2$, $K$ is a cabled knot in $V$, 
and $r$ is the cabling slope.  
Say $K$ is a cable of a knot $J$ in $V$; 
$J$ is not a core of $V$ because $V(K; r)$ is not Seifert fibered. 
Hence $K_n$ is a cable of the knot $J_n$ obtained by twisting $J$ along $c$, and $r_n$ is the cabling slope.  
Since the unknot $c$ does not bound a disk that is either disjoint from $J$ or intersected by $J$ just once, 
$J_n$ becomes a trivial knot in $S^3$ for at most two integers $n$ \cite{Gabai_solidtorus} (cf. \cite{KMS, Ma}). 
In the following we take $n \in \mathcal{S}$ so that $J_n$ is not a trivial knot in $S^3$.  
So assuming $K_n(r_n)$ is Seifert fibered, either it is irreducible and thus just the lens space $L(p,q)$ 
or it is reducible and either $L(2,1) \# L(2,1)$ with no degenerate fibers or a connected sum of two lens spaces with one degenerate fiber (cf.\  \cite[Proposition~2.8]{DMM1}).  
For homological reasons, $K_n(r_n)$ cannot be $L(2,1) \# L(2,1)$.

Assume that $K_n(r_n)$ is a lens space for some $n \in \mathcal{S}$. 
Then we appeal to the classification of lens space surgeries on satellite knots \cite[Theorem 1]{BL}.  
Since $J_n$ is non-trivial in $S^3$, 
then it is a torus knot. Therefore $K_n$ is a cable of this torus knot in $S^3$ and $r_n$ is an integral slope intersecting the cabling slope once. 
Yet since a non-trivial knot cannot be expressed as a non-trivial cable of $J_n$ in more than one way,  
$r_n$ cannot also be a cabling slope.  
This is a contradiction.

Hence $K_n(r_n)$ is a connected sum of lens spaces for $n \in \mathcal{S}$. 
Greene showed that $K_n$ must be the cable of a torus knot where the surgery is along the cabling slope \cite{Greene}. 
Since we have chosen $n$ so that $J_n$ is nontrivial, 
this implies that $J_n$ is a nontrivial torus knot in $S^3$ for each $n \in \mathcal{S}$. 
Let us determine the position of $J$ in $V$. 

\medskip

\begin{claim}
\label{J cable}
$J$ is a $0$--bridge braid in $V$. 
In particular, $K$ is a cable of a $0$--bridge braid in $V$. 
\end{claim}

\begin{proof}
If $V - \nbhd(J)$ is Seifert fibered, 
then it is a cable space and we have the desired conclusion.  
So we exclude the remaining possibilities of $V - \nbhd(J)$ being hyperbolic, reducible, or toroidal. 
If hyperbolic, 
following \cite[Corollary~1.2]{GWuAnnular}, 
there are at most four integers $n$ such that $J_n$ is a nontrivial torus knot, 
a contradiction.   
If reducible, then $J$ must be contained in a ball in $V$; thus $c$ bounds a disk disjoint from $K$, a contradiction.  
Thus we assume that $V - \nbhd(J)$ is toroidal.   
Let  $\mathcal{T}$ be a family of tori that gives the torus decomposition of $V - \nbhd(J)$ in the sense of Jaco-Shalen \cite{JS} and Johannson \cite{Jo}
\footnote{We say that a family of tori $\mathcal{T}$ gives a torus decomposition of an irreducible $3$--manifold $M$, 
if each member of $\mathcal{T}$ is an essential torus and each decomposing piece (i.e.\ component) obtained by cutting $M$ along all of these tori is Seifert fibered or hyperbolic and no proper subfamily 
of $\mathcal{T}$ has this property.}.
See also \cite{Hat2}. 
Let $X$ be the decomposing piece which contains $\partial V$; $X \ne V - \nbhd(J)$. 
If $X \cup _{-\frac{1}{n}} N(c)$ is $\bdry$-irreducible for some $n \in \mathcal{S}$, 
then a component $T$ of $\partial (X \cup _{-\frac{1}{n}} N(c))$ is an essential torus in the torus knot space 
$S^3 - \nbhd(J_n) = (V - \nbhd(J)) \cup _{-\frac{1}{n}} N(c)$, a contradiction. 
Thus $X \cup _{-\frac{1}{n}} N(c)$ is $\bdry$-reducible for any $n \in \mathcal{S}$.  
Hence \cite[Theorem~2.0.1]{CGLS} shows that $X$ is a cable space and the distance between the slope $-\frac{1}{n}$ and that of the fiber slope of $X$ on $\partial V$ is at most one. 
This then implies that the fiber slope coincides with the longitudinal slope $\lambda$ of $c$ in $\bdry V$. 
Let $V_X \subset V$ be the solid torus bounded by $T = \bdry X - \bdry V$ so that $V = X \cup_T V_X$ and $V_X$ contains $J$ and $K$.
Then since $V$ is a solid torus, the meridian of $V_X$ must intersect a regular fiber of $X$ in $T$ just once.  
Therefore the core of $V_X$ is isotopic in $V$ to $\lambda$.  
In particular, there is a meridional disk of $V$ disjoint from $V_X$.  
Hence $c$ bounds a disk disjoint from $K$, 
a contradiction.
\end{proof}

\medskip

Thus $K_n$ is a cable of a torus knot $J_n$ and $c$ is a basic seiferter for the companion torus knot $J_n$. 
It follows from \cite[Proposition~8.7]{DMM1} that $c$ is a seiferter for $K_n(r_n)$, hence for $K(r)$. 

\bigskip

\noindent
{\em Case II: $V(K;r)$ is toroidal, irreducible, $\bdry$--irreducible, and not Seifert fibered.}\\
Since $V(K; r)$ is irreducible, 
except for at most two integers $n$, 
$K_n(r_n)$ is irreducible and
hence not a connected sum of lens spaces \cite[Theorem~1.2]{GLreducible}.  
Thus, 
if $K_n(r_n)$ is Seifert fibered, 
then it admits a non-degenerate Seifert fibration and it is a Seifert fibered space in the usual sense; 
see \cite[Proposition~2.8 (2)(3)]{DMM1}.

\begin{claim}
\label{claim:septori}
If there is an essential torus in $V(K;r)$, then it is separating.
\end{claim}

\begin{proof}
If not, then there exists a non-separating  torus $T \subset V(K;r) \subset K_n(r_n)$ for all integers $n$.  
Homological reasons then  imply that $r_n=0$ for all $n$.  

If $T$ compresses in $K_k(r_k)$ for some integer $k$, 
then the compression produces a non-separating $S^2$.  
Hence $K_k(r_k) = S^1 \times S^2$ and $K_k$ is the unknot \cite{GabaiIII}.  
By \cite[Theorem 4.2]{KMS}(\cite{Ma}), 
if there were another integer $k'$ for which $K_{k'}$ were an unknot, 
then $K \subset V$ must be homeomorphic to a $(2, 1)$--torus knot in a solid torus; 
in particular $V(K;r)$ would be a Seifert fibered space, contrary to assumptions.  
Thus there is at most one integer $k$ such that $T$ compresses.  
In particular, $T$ is essential in $K_k$ for all but at most one (which is at least eight) of the integers $k \in \mathcal{S}$. 

When $T$ is essential in $K_k(r_k)$ for some $k \in \mathcal{S}$, 
then since $K_k(r_k)$ is a Seifert fibered space over $S^2$ or $\RP^2$ the torus $T$ must be horizontal with respect to any Seifert fibration of $K_k(r_k)$, 
e.g.\ \cite[Proposition~1.11]{Hat2}.
Therefore there are at least $8$ integers $k$ for which $K_k(r_k) = V(K;r) \cup_{-\tfrac{1}{k}} N(c)$ must be a torus bundle over $S^1$ in which $T$ is a fiber. 
Indeed, since $T \subset V(K;r)$, we must have 
\[K_k(r_k)-\nbhd(T) = (V(K;r) - \nbhd(T)) \cup_{-\tfrac{1}{k}} N(c) \cong T \times[0,1]\] 
for these integers $k$.  

Since the free homotopy class of a knot in the product $T \times [0,1]$ is fixed, 
the free homotopy class of its projection to $T$ is also fixed.  
So, since $T$ is a torus, 
the homotopy class  of the projection may be represented as a multiple $m\mu$ of a primitive homology class $\mu$, 
and any projection of the knot with at most one crossing must lie in an annulus whose core curve represents $\mu$.  
Hence, if a knot does not have a projection with no crossings, then it has at most two non-isotopic projections with a single crossing. Applying \cite{Ni3}, such a knot could have at most $2$ non-trivial surgeries to a manifold homeomorphic to $T \times [0,1]$. 

Therefore, by \cite{Ni3},  
for each of these integers $k$ it must be that $c_k$ may be isotoped in $K_k(r_k)-\nbhd(T)$ into the boundary where the $0$--slope on $c_k$ agrees with the framing by the boundary torus; 
see the argument in the proof of Theorem~\ref{link=0}.   
(The slope of the blackboard framing of a one crossing projection whose single crossing is nugatory is accounted for by the associated slopes of its crossingless projection.) 
That is, $c_k$ is isotopic to a $0$--framed curve in the fiber $T$ of the torus bundle $K_k(r_k)$.  
Hence $K_n(r_n)$ must be a torus bundle for all integers $n$.   
Therefore for each integer $n$, the knot $K_n$ is a genus one fibered knot \cite[Corollary~8.23]{GabaiIII}, 
and so it is either a trefoil or the figure eight knot \cite{BuZ, GonAcu}. 
Since $c$ is not a meridian of $K$ and does not bound a disk disjoint from $K$, 
this contradicts that the same knot can only appear finitely many times in the twist family $\{K_n\}$ \cite[Theorem 3.2]{KMS}.
\end{proof}

Let $\mathcal{T}$ be a family of essential tori in $V(K; r)$ which gives a torus decomposition of $V(K; r)$.   
By assumption $\mathcal{T}$ is non-empty
and, as shown in Claim~\ref{claim:septori}, consists of separating tori.
Let $X$ be the decomposing piece which contains $\partial V$.

If $X$ is hyperbolic, 
then referring to Table~2.1 in \cite{Go_small} in which the relevant results from \cite{Go_Boundary, GLreducible, GL_TS, GW_AS, Oh, Q, Sch, Wu_incomp, Wu_suture} among others are summarized,  
we see that there are at most nine integers $k$ such that $X \cup_{-\frac{1}{k}}N(c)$ is not hyperbolic. 
Since $| \mathcal{S} | > 9$ we have an integer $n \in \mathcal{S}$ for which $X \cup_{-\frac{1}{n}}N(c)$ is also hyperbolic. 
But then $\mathcal{T}$ gives a torus decomposition for $K_n(r_n)$ 
in which we have the hyperbolic piece $X \cup_{-\frac{1}{n}}N(c)$, 
contradicting that $K_n(r_n)$ is Seifert fibered.

Hence $X$ admits a Seifert fibration.
Let $T$ be a component of $\partial X - \partial V$. 
We now divide into two cases depending on whether, in $X \cup_{-\frac{1}{n}} N(c)$, 
\begin{itemize}
\item[(a)] $T$ is compressible for at most two integers $n \in \mathcal{S}$, or 
\item[(b)] $T$ is compressible for more than two integers $n \in \mathcal{S}$. 
\end{itemize}
In the following we show that the first case does not occur and the second case leads us to conclude that 
$c$ is a pseudo-seiferter for $(K, r)$.

\medskip

{\em Case II\ }(a): Suppose that $T$ is compressible in $X \cup_{-\frac{1}{n}} N(c)$ for at most two integers $n \in \mathcal{S}$.  
We can choose $n \in \mathcal{S}$ so that $T$ is incompressible in $X \cup_{-\frac{1}{n}} N(c)$. 

Since $X$ admits a Seifert fibration, 
any Seifert fibration of $X$ extends to one on $X \cup_{-\frac{1}{n}} N(c)$. 
Note that since $T$ is incompressible in $X \cup_{-\frac{1}{n}}N(c)$, 
the extended Seifert fibration is non-degenerate \cite[Lemma~2.7]{DMM1} and unique except when $X \cup_{-\frac{1}{n}} N(c)$ is $S^1 \times S^1 \times [0, 1]$ or 
the twisted $I$--bundle over the Klein bottle \cite[VI.18]{Ja}.  
As Lemma~\ref{exceptional case} below shows, 
except for at most two integers $n$, 
these exceptional cases cannot occur. 
Hence we can take $n \in \mathcal{S}$ 
so that the Seifert fibration of $X \cup_{-\frac{1}{n}} N(c)$ is the extension of a Seifert fibration of $X$, 
which has a unique Seifert fibration.  
Furthermore, since $X \cup_{-\frac{1}{n}} N(c)$ is a (non-degenerate) Seifert fibered manifold with boundary, 
it is irreducible and hence $\bdry$--irreducible. 
Since $K_n(r_n)$ admits a Seifert fibration, 
the Seifert fibration now on $X$ must be compatible with that of the next decomposing pieces along the tori $\partial X - \partial V$. 
Thus $V(K; r)$ was already Seifert fibered, contradicting our original  assumption for Case II. 

\begin{lemma}
\label{exceptional case}
There are at most two integers $n \in \mathcal{S}$ such that 
$\hat{X}_n = X \cup_{-\frac{1}{n}} N(c)$ is $S^1 \times S^1 \times [0, 1]$ or 
a twisted $I$--bundle over the Klein bottle. 
\end{lemma}

\begin{proof}
For any $k \in \Z$ let $\hat{X}_k = X \cup_{-\frac{1}{k}} N(c)$ and denote the core of $N(c)$ in $\hat{X}_k$ as $c_k$.

For $n \in \mathcal{S}$, since $K_n(r_n)$ is Seifert fibered and $T$ is an incompressible, separating torus in $K_n(r_n)$, 
$T$ must be a horizontal or vertical torus in any Seifert fibration of $K_n(r_n)$, \cite[Proposition~1.11]{Hat2}.
If it is horizontal, 
then $T$ splits $K_n(r_n)$ into two twisted $I$--bundles over the Klein bottle, 
however Claim~\ref{claim:seifertoverklein} below shows this cannot occur.
Thus $T$ must be vertical, and the Seifert fibration restricts to the components of its complement.  
Moreover, at most one of these components has multiple Seifert fibrations (i.e.\ is a twisted $I$--bundle over the Klein bottle).

\begin{claim}
\label{claim:seifertoverklein}
A union of two twisted $I$--bundles over a Klein bottle cannot be
obtained by surgery on a knot in $S^3$.
\end{claim}

\begin{proof}
Let $M_i$ ($i = 1, 2$) be a twisted $I$--bundle over the Klein bottle, and 
let $M$ be a union of $M_1$ and $M_2$ in which $M_1 \cap M_2 = \partial M_1 = \partial M_2 = \Sigma$. 
Note that $\Sigma$ is the $\partial I$--subbundle of $M_i$, 
and $\pi_1(\Sigma)$ is an index two normal subgroup of $\pi_1(M_i)$ for $i = 1, 2$. 
Then by the van-Kampen theorem we have: 
\[1 \to \pi_1(\Sigma) \to \pi_1(M) \to \mathbb{Z}_2 \ast \mathbb{Z}_2 \to 1.\]
Therefore $H_1(M)$ has an epimorphism to $\mathbb{Z}_2 \oplus \mathbb{Z}_2$. 
Thus $M$ cannot be obtained by surgery on a knot in $S^3$.
\end{proof}

\medskip

\noindent{\em Case: Twisted $I$--bundle over the Klein bottle.}  

Assume that, 
for some $n \in \mathcal{S}$, 
$\hat{X}_n = X \cup_{-\frac{1}{n}} N(c)$ is a twisted $I$--bundle over the Klein bottle. 
Then $\hat{X}_n$ has exactly two Seifert fibrations: 
a Seifert fibration $\mathcal{F}_{D_n}$ over the disk with two exceptional fibers of indices $2$ and 
a Seifert fibration $\mathcal{F}_{M_n}$ over the M\"obius band with no exceptional fibers \cite[Lemma 1.1]{WW}. 
Observe that 
in $\bdry \hat{X}_n$ a regular fiber of $\mathcal{F}_{D_n}$ and a regular fiber of $\mathcal{F}_{M_n}$ intersect exactly once.

Since $X$ is a Seifert fibered space, 
any Seifert fibration of $X$ extends across $\hat{X}_n$ to one of these (non-degenerate) Seifert fibrations 
in which $N(c)$ is a fibered neighborhood of an exceptional or regular fiber. 
Accordingly $X$ is either

\begin{enumerate}
\item a Seifert fibered space over the annulus with one exceptional fiber of index $2$ if $c_n$ is an exceptional fiber of $\mathcal{F}_{D_n}$ 
\item a Seifert fibered space over the annulus with two exceptional fibers of indices $2$ if $c_n$ is a regular fiber of $\mathcal{F}_{D_n}$, or 
\item a circle bundle over  the once-punctured M\"obius band if $c_n$ is an exceptional fiber of $\mathcal{F}_{M_n}$.
\end{enumerate}

In each of these three cases we assume that the regular fiber has slope $\frac{x}{y}$ on $\partial V = \partial N(c)$ 
for some relatively prime integers $x,\ y$. 
Then the distance between the slope $\frac{x}{y}$ of the regular fiber and the slope $-\frac{1}{k}$ of the meridian of $N(c)$ in $\hat{X}_k$ is 
$|kx+y|$  for any integer $k$.
Therefore, if $\hat{X}_k$ is homeomorphic to a twisted $I$--bundle over the Klein bottle (and hence to $\hat{X}_n$) then the following must occur:
In the first case  $c_k$ must be an exceptional fiber of order $2$ and this distance $|kx+y|$ must be $2$; 
this is possible for at most two values of $k$.
In the second and third cases  $c_k$ must be a regular fiber and this distance $|kx+y|$ must be $1$; 
this is possible for at most two values of $k$ if $(x, y) \ne (0, \pm 1)$, 
and for infinitely many integers $k$ if $(x, y) = (0, \pm 1)$ in which case the regular fiber is the longitude of $c$. 
Hence  $\hat{X}_k$ is homeomorphic to a twisted $I$--bundle over the Klein bottle for at most two integers $k$ (including $k=n$) 
unless $c_n$ is a regular fiber of a Seifert fibration on $\hat{X}_n$ where the fibers meet $\bdry N(c)$ along the $0$--slope.  
Therefore we continue now assuming this latter exceptional situation.

Since $K_n(r_n)$ is Seifert fibered, 
one of the two Seifert fibrations on $\hat{X}_n$ is the restriction of a Seifert fibration on $K_n(r_n)$ and therefore 
compatible with the restriction of the Seifert fibration on  the complementary piece $Y = K_n(r_n) - \hat{X}_n = V(K;r) - X$.  
As noted above, $Y$ has a unique Seifert fibration. 
If this restricted Seifert fibration on $\hat{X}_n$ arises as the extension of a Seifert fibration on $X$, 
then $V(K;r)$ is Seifert fibered, contradicting assumption.  
Hence  on $\hat{X}_n$ the ``restricted'' Seifert fibration from $K_n(r_n)$ and the ``extended'' Seifert fibration from $X$ must be different; 
one is the Seifert fibration $\mathcal{F}_{D_n}$ over the disk and the other is the Seifert fibration $\mathcal{F}_{M_n}$ over the M\"obius band.

Let us first assume that $X$ has a Seifert fibration over the annulus so that the extended Seifert fibration of $\hat{X}_n$ is $\mathcal{F}_{D_n}$ and the restricted Seifert fibration is $\mathcal{F}_{M_n}$.  
Hence a regular fiber $t_{M_n}$ of $\mathcal{F}_{M_n} \cap T$ is a fiber of the Seifert fibration on $Y$.  
By assumption, $c_n$ is a regular fiber of $\mathcal{F}_{D_n}$ and there is a vertical annulus $A$ in the restriction to $X$ joining $\bdry N(c)$ to 
$T=\bdry \hat{X}_n$, meeting $\bdry N(c)$ along the $0$--slope.  
Thus $(-\frac{1}{n'})$--surgery on $c_n$  
(in terms of the slopes on $\bdry N(c)$) in $\hat{X}_n$ is realized by an annulus twist along $A$ in $X$. 
Since the fiber $t_{M_n}$ of $\mathcal{F}_{M_n} \cap T$ intersects $t_{D_n}$ once, 
a regular fiber $t_{M_{n'}}$ of $\mathcal{F}_{M_{n'}} \cap T$ intersects $t_{M_{n}}$ minimally $|n-n'|$ times.  
Also observe that $t_{D_n}$ is still a regular fiber of $\mathcal{F}_{D_{n'}} \cap T$, intersecting $t_{M_n}$ once.  
Hence if $n' \neq n$ then no fibration of $\hat{X}_{n'}$ is compatible with the Seifert fibration on $Y$.  
Thus $n$ is the only element of $\mathcal{S}$.

When $X$ has a Seifert fibration over the once-punctured M\"obius band 
so that the extended Seifert fibration of $\hat{X}_n$ is $\mathcal{F}_{M_n}$ and the restricted Seifert fibration is $\mathcal{F}_{D_n}$,
we apply the same argument to obtain the same conclusion. 

\medskip

\noindent
{\em Case:  $I$--bundle over the torus.}

Suppose next that $\hat{X}_n=X \cup_{-\frac{1}{n}} N(c)$ is $S^1 \times S^1 \times [0, 1]$ for some $n \in \mathcal{S}$. 
This can happen only when $\partial X - \partial V$ consists of two components $T_0$ and $T_1$; 
each $T_i$ bounds a $\partial$--irreducible $3$--manifold $Y_i$ in $V(K; r)$.     
Identify $T_i$ with $S^1 \times S^1 \times \{ i \}$ in $\hat{X}_n$ for $i=0,1$. 
As discussed above, since $K_n(r_n)$ is a Seifert fibered space, 
both $Y_0$ and $Y_1$ are Seifert fibered spaces and each either has a unique Seifert fibration or is a twisted $I$--bundle over the Klein bottle and therefore has exactly two Seifert fibrations.  
However, since $\hat{X}_n$ is a product torus, at most one  is a twisted $I$--bundle over the Klein bottle by Claim~\ref{claim:seifertoverklein}.

Assume that $\hat{X}_{n_j} \cong S^1 \times S^1 \times [0, 1]$ for three integers $n_j$ with $j = 0, 1, 2$. 
Then $c_{n_0} \subset \hat{X}_{n_0}$ admits non-trivial surgeries to $\hat{X}_{n_1}$ and $\hat{X}_{n_2}$.   
Since $X$ is Seifert fibered and $\hat{X}_{n_0} \cong S^1 \times S^1 \times [0, 1]$, 
$X$ Seifert fibers only as circle bundles over the annulus,
$c_{n_0}$ must be a regular fiber in one of these fibrations.  
Hence $c_{n_0}$ is the core of a vertical annulus in $\hat{X}_{n_0}$ joining $T_0$ and $T_1$, and 
 $X$ is a $2$--fold composing space, i.e.\ $\mbox{[disk with two holes]} \times\ S^1$. 
 This annulus restricts to two vertical annuli in $X$; for each $i=0,1$, let $A_i$ be the one connecting $\bdry N(c)$ and $T_i$.
Then $(-\frac{1}{n_j})$--surgery on $c_{n_0}$  (in terms of the slopes on $\bdry N(c)$) corresponds to some annulus twist along $A_1$. 

Since $K_{n_0}(r_{n_0}) = Y_0 \cup \hat{X}_{n_0} \cup Y_1$ is Seifert fibered,  
as discussed above, for any Seifert fibration $\mathcal{F}_{n_0}$ of $K_{n_0}(r_{n_0})$, the tori $T_0$ and $T_1$ are vertical (and parallel), 
and $\mathcal{F}_{n_0}$ restricts to Seifert fibrations on the components $\hat{X}_{n_0}$, $Y_0$, and $Y_1$.
By Claim~\ref{claim:seifertoverklein} 
at least one of $Y_0$ and $Y_1$, say $Y_0$, 
is not a twisted $I$--bundle over the Klein bottle. 
So we may assume $Y_0$ has a unique Seifert fibration, 
$\mathcal{F}_0$ that is the restriction of $\mathcal{F}_{n_0}$; 
and $Y_1$ has at most two Seifert fibrations, 
$\mathcal{F}_{1}$ that is the restriction of $\mathcal{F}_{n_0}$ and $\mathcal{F}_{1}'$ if the second Seifert fibration exists.  
Note that $\mathcal{F}_{1}$ and $\mathcal{F}_{1}'$ do not match on $\partial Y_1$. 
Let $A$ be a vertical annulus in the restriction of the fibration $\mathcal{F}_{n_0}$ to  $\hat{X}_{n_0} = S^1 \times S^1 \times [0, 1]$ 
such that $A \cap T_0$ is a regular fiber of $\mathcal{F}_0$ on $\partial Y_0$
and $A \cap T_1$ a regular fiber of $\mathcal{F}_{1}$ on $\partial Y_1$. 

Let us prove that if 
$K_{n_j}(r_{n_j}) 
= Y_0 \cup \hat{X}_{n_j} \cup Y_1 
= Y_0 \cup (S^1 \times S^1 \times [0, 1]) \cup Y_1$ is Seifert fibered for both $j = 1, 2$, 
then $A_1 \cap T_1$ is isotopic to $A \cap T_1$, which is a fiber of $\mathcal{F}_{1}$. 
Assume for a contradiction that $A_1 \cap T_1$ is not isotopic to $A \cap T_1$. 
Then $(-\frac{1}{n_j})$--surgery on $c_0^*$ is realized by an annulus twist $\phi_{n_j}$ along $A_1$, 
hence $\phi_{n_j}(A \cap T_1) \ne A \cap T_1$.   
Recall that in $K_{n_j}(r_{n_j})$,  
$\hat{X}_{n_j}$ and $Y_1$ are glued so that 
$\phi_{n_j}(A \cap T_1)$ is identified with the regular fiber of $\mathcal{F}_{1}$ on $\partial Y_1$. 
Hence the Seifert fibration of $X \cup_{-\frac{1}{n_j}} N(c) = S^1 \times S^1 \times [0, 1]$
(which coincides with $\mathcal{F}_{0}$ on $Y_0$) is not compatible with the Seifert fibration $\mathcal{F}_{1	}$ of $Y_1$. 
Thus $n_0$ is the unique integer such that 
$Y_0 \cup \hat{X}_{n_0} \cup Y_1 
= Y_0 \cup (S^1 \times S^1 \times [0, 1]) \cup Y_1$ is Seifert fibered for the Seifert fibration $\mathcal{F}_{1}$ of $Y_1$.   

If $Y_1$ has the second Seifert fibration $\mathcal{F}_{1}'$, 
$Y_0 \cup \hat{X}_{n_1} \cup Y_1 
= Y_0 \cup (S^1 \times S^1 \times [0, 1]) \cup Y_1$ may be Seifert fibered for the Seifert fibration $\mathcal{F}_{1}'$ of $Y_1$, 
but the above argument shows that $n_1$ is the unique such integer. 
Thus if $Y_0 \cup \hat{X}_{n_j} \cup Y_1 
= Y_0 \cup (S^1 \times S^1 \times [0, 1]) \cup Y_1$ is Seifert fibered for both $j = 1, 2$, 
then $A_1 \cap T_1$ is isotopic to $A \cap T_1$. 
This then implies that $c_0^*$ is isotopic to the regular fiber $A_1 \cap T_1$ and 
$V(K; r)$ is Seifert fibered contrary to assumption. 
Hence there are at most two integers $n$ such that $\hat{X}_{n} \cong S^1 \times S^1 \times [0, 1]$. 
\end{proof}

\medskip

{\em Case II\ }(b): Assume that $T$ is compressible for more than two integers $n \in \mathcal{S}$. 
Then $X$ is a cable space and the distance between the slope $-\frac{1}{n}$ and 
that of the fiber slope of $X$ is less than or equal to one \cite[Theorem~2.0.1]{CGLS}. 
Since we have at least three such integers $n$, 
the fiber slope of the cable space $X$ coincides with the preferred longitude of $c$. 
Hence $X \cup _{-\frac{1}{k}} N(c)$ is a solid torus for any integer $k$. 
Let $X'$ be the decomposing piece next to $X$; 
we will show that $V(K; r) = X' \cup X$ and $X'$ is Seifert fibered. 

First assume for a contradiction that we have yet another decomposing piece $X'' (\ne X, X')$ in $V(K; r)$. 
Again, since $X \cup _{-\frac{1}{n}} N(c)$ is a solid torus (with distinct meridional slopes for each integer $n$) 
and gives a Dehn filling of $X'$ for each $n \in \mathcal{S}$,  $X'$ cannot be hyperbolic (following the argument used for $X$).   
Hence we may assume $X'$ admits a Seifert fibration.
If some component of $\partial X' - T$ is compressible in $V(K; r) \cup_{-\frac{1}{n}} N(c)$ for more than two integers $n \in \mathcal{S}$,  
then \cite[Theorem~2.0.1]{CGLS} shows that $X \cup X'$ is a cable space, which is impossible 
because a cable space is atoroidal. 
So we may assume that  some component $T'$ of $\partial X' - T$ is incompressible in $X' \cup_T (X \cup _{-\frac{1}{n}} N(c))$ for all but at most two integers $n \in \mathcal{S}$.  
Applying the argument in (i) again implies that $V(K;r)$ is Seifert fibered giving us a contradiction. 

So $V(K; r)$ consists of two decomposing pieces $X$ and $X'$, where $X$ is a cable space. 
It remains to see that $X'$ is a Seifert fibered space. 
If $X'$ is not Seifert fibered, then since it is a decomposing piece of $V(K; r)$, 
it is hyperbolic. 
Note that $K_n(r_n) = X' \cup_T (X \cup_{-\frac{1}{n}} N(c))$, 
where $X \cup_{-\frac{1}{n}} N(c) = S^1 \times D^2$ for $n \in \mathcal{S}$. 
Thus $K_n(r_n) = X' \cup _{\gamma_n} (S^1 \times D^2)$ for some slope $\gamma_n$ on $\partial X'$ which varies 
depending on $n$. 
Then \cite[Theorem~1.2]{LM} shows that $K_n(r_n)$ is hyperbolic for some integer $n \in \mathcal{S}$, 
a contradiction. 
Hence $V(K; r) = X \cup X'$, 
where $X$ is a cable space (with $\partial X \supset \partial V$) and $X'$ is a Seifert fibered space.  

Recalling that the fiber slope of $X$ is the preferred longitude of $c$, 
$X \cup_{-\frac{1}{n}} N(c) = S^1 \times D^2$ in which $c^*$ is a cable of a core $t$ of this solid torus. 
In particular, $X \cup_{-\frac{1}{0}} N(c) = S^1 \times D^2$ in which $c$ is a cable of a core $t$ of this solid torus. 
Since $K(r) - \mathrm{int}(X \cup_{-\frac{1}{0}} N(c)) (= X')$ is Seifert fibered, 
$c$ is a pseudo-seiferter for $(K, r)$. 
\end{proof}

\medskip

\subsection{Pseudo-seiferters do not have linking number 1.}
\label{subsec:pseudo-seiferter link1}
It is known that, 
for each integer $\ell \ge 0$, there is a Seifert surgery $(K, r)$ which has a seiferter with $|\lk(K,c)| = \ell$ \cite{DMM1}. 
On the other hand, so far we have no example of a Seifert surgery with a pseudo-seiferter. 
In this subsection, we will prove that there is no Seifert surgery $(K, r)$ which has a pseudo-seiferter $c$ with $|\lk(K,c)| = 1$. 

\medskip

\begin{proposition}
\label{prop:pseudoseiferterlinking}
Assume $c$ is a pseudo-seiferter for a Seifert surgery $(K, r)$.  
Then $|\lk(K,c)| \neq 1$.  
\end{proposition}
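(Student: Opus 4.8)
The plan is to reduce the statement to an elementary relation between linking numbers, extracted from the cabling structure furnished by the definition of a pseudo-seiferter. By hypothesis $c$ is the cable of a fiber $t$ in a Seifert fibration $\mathcal{F}$ of $K(m)$, realized by a fibered solid torus $W_0$, a neighborhood of $t$, inside which $c$ sits as a $(p,q)$--cable of the core $t$; here $p \geq 2$ is the index of the exceptional fiber $t$ of the cable space $W = W_0 - \nbhd(c)$, exactly as in the proof of Theorem~\ref{thm:integralsurgery}. The key homological input is that, as an element of $H_1(W_0) \cong \Z$, the cable $c$ is $p$ times the core: $[c] = p[t]$.

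The steps I would carry out, in order, are as follows. First, I would fix the concrete solid torus $W_0 = W \cup_{\mu_c} \nbhd(c)$ obtained by refilling $c$ along its meridian $\mu_c$. Since the cabling slope on $\bdry \nbhd(c)$ is the longitude $\lambda_c$ and $\mu_c$ meets $\lambda_c$ once, this filling of the cable space $W$ is a solid torus in which the original unknot $c$ is precisely the $(p,q)$--cable of the core $t$. Second, I would argue that $W_0$ is disjoint from the surgery region, so that $W_0$ embeds in $S^3 - \nbhd(K)$; hence $t$ is an honest knot in $S^3$ disjoint from $K$ and $c$ is literally its $(p,q)$--cable. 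Third, applying the isomorphism $H_1(S^3 - \nbhd(K)) \cong \Z$ given by linking number with $K$ to the relation $[c] = p[t]$ yields
$$\lk(K,c) = p \cdot \lk(K,t).$$
Since $p \geq 2$, this forces $|\lk(K,c)| \in \{0, p, 2p, \dots\}$, and in particular $|\lk(K,c)| \neq 1$, as desired.

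The main obstacle I anticipate is the second step: justifying that $W_0$ may be taken disjoint from the image of the surgery solid torus, i.e.\ that the dual of the surgered $K$ lies in the Seifert piece $X$ rather than inside the fibered neighborhood of $t$. This is where I would lean on the structure $V(K;m) = X \cup_T W$ from the pseudo-seiferter definition: the cable space $W$ carries $c$ on one boundary component and has the core $t$ as its only exceptional fiber, so the surgery dual to $K$ must lie in $X$, leaving $W_0$ in the exterior of $K$. The delicate point is to pass from the statement ``$c$ is isotopic in $K(m)$ to a cable of a fiber'' to the concrete assertion that the actual unknot $c \subset S^3 - K$ is the cable of a knot $t$ situated in the exterior of $K$; once this is secured, the linking-number computation above is immediate and uses only that the cabling winding number $p$ is at least two.
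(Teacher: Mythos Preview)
Your approach is appealing, but the second step contains a genuine gap that you correctly flag as ``delicate'' yet do not actually resolve.  You assert that the surgery dual $K^*$ must lie in the Seifert piece (your $X$) because the cable space $W$ ``has the core $t$ as its only exceptional fiber.''  This is a non-sequitur: $K^*$ is simply the core of the filling solid torus in $V(K;m)$ and there is no reason it should be a fiber of anything.  Nothing prevents $K^*$ from sitting inside the cable space $W\subset W_0$.  The decomposition $V(K;m)=X\cup_T W$ is a statement about $K(m)-\nbhd(c)$, not about $S^3-\nbhd(K\cup c)$, so it gives no control over where $K^*$ lands relative to $T$.  And you cannot fix this by isotoping $K^*$ or $W_0$ in $K(m)$: such an isotopy would cross $c$ or move $c$, and either way the identification $K(m)-K^*=S^3-K$ (on which your linking-number computation relies) is destroyed.

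In fact the paper's own computation shows why your clean relation $\lk(K,c)=p\cdot\lk(K,t)$ need not hold.  The paper isotopes the core $t$ off $K^*$ in $K(m)$ to obtain a knot $C\subset S^3-K$, then realizes the whole twist family $K_n(m_n)$ as surgeries on the link $K\cup C$.  A presentation-matrix calculation gives $|H_1(K_n(m_n))|=|np(Ap+Bq)+B|$ for constants $A,B$ depending on $m$, $q$, and $\lk(K,C)$.  Comparing with $|H_1(K_n(m_n))|=|m+n\lk(K,c)^2|$ yields
\[
\lk(K,c)^2=|p(Ap+Bq)|=p\cdot|p\,\lk(K,C)^2-mq|,
\]
which is divisible by $p$ but not by $p^2$ in general.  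Thus only $p\mid\lk(K,c)^2$ is guaranteed, not $p\mid\lk(K,c)$; your stronger conclusion would force $p^2\mid\lk(K,c)^2$.  The weaker divisibility is of course still enough for $|\lk(K,c)|\neq1$ since $p\ge2$, but it is obtained by tracking the homology of the entire twist family rather than by a single cabling identity in $S^3-K$.
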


To prove this, we first establish conventions and two lemmas.

Recall that if $c$ is a pseudo-seiferter for the Seifert surgery $(K,r)$, 
then $V(K;r) = W \cup X$ where $W$ is a cable space 
(a Seifert fibered space over the annulus with one exceptional fiber) and $X$ is a Seifert fibered space such that 

\begin{itemize}
\item $\bdry W = \bdry V \cup \bdry X$,
\item the exceptional fiber $\epsilon$ of $W$ has index $p\geq 2$.
\item $\lambda$, the preferred longitude of $c$, is the slope of a regular fiber of $W$ in $\bdry V$ (and hence is the cabling slope), 
\item    $\epsilon$ may be oriented so that $\lambda = p \epsilon$ in $H_1(W)$,  and
\item the manifolds $W_n = W \cup_{-\tfrac{1}{n}}N(c)$ are all solid tori with meridians $\mu_n$ in $\bdry X$.
\end{itemize}
In particular, we may regard $c$ as a torus knot with closed regular neighborhood $N(c)$ in the solid torus $W_0$.  
Then we view  $\bdry V$ as the boundary of the solid torus $N(c)$ 
with standard meridian-longitude basis $\mu, \lambda$ and $\bdry X$ as the boundary of the solid torus $W_0$ 
with standard meridian-longitude basis $\mu_0, \lambda_0$. 

\begin{lemma}
\label{lem:meridianoffilledcablespace}
 Let $\gamma$ be a regular fiber of $W$ in $\bdry X$, 
 oriented to be homologous to $\lambda$. 
 Then $\mu_0 \cdot \gamma = p$ and $\mu_n = \mu_0 - n p \gamma$.
\end{lemma}

\begin{proof}
Due to our choices of orientation, $p \mu = \mu_0$ in $H_1(W)$.  
Then because $\mu \cdot \lambda = 1$, 
we obtain that $\mu_0 \cdot \gamma = p$.  
Since $\mu - n \lambda$ in $\bdry V$ bounds a disk in $W_n$, 
we have $\mu_0 - n p \gamma = p \mu - n p \lambda = p(\mu - n \lambda) =0$ in $H_1(W_n)$.  
Since $W$ is a cable space with exceptional fiber of index $p$, 
the regular fiber $\gamma$ satisfies $\gamma = q \mu_0 + p\lambda_0$ in $H_1(\bdry X)$ for some integer $q$ coprime to $p$.  Therefore $\mu_0 - np \gamma$ can be written as $\mu_0 - np (q \mu_0 + p\lambda_0) = (1-npq)\mu_0 - p\lambda_0$.  
Since $1-npq$ and $p$ are relatively prime, this element is primitive in homology and represents a single essential curve. 
Thus the essential curve $\mu_0 - n p \gamma$ in $\bdry X$ must be the meridian $\mu_n$ of $W_n$.
\end{proof}

\medskip

\begin{lemma}
\label{lem:integralsurgery}
If $(K, r)$ is a Seifert surgery with a pseudo-seiferter $c$ such that $|\lk(K,c)| = 1$, 
then $r$ is an integer. 
\end{lemma}

\begin{proof}
Assume that $c$ is a pseudo-seiferter for $(K, r)$. 
Let $W_0$ be a fibered solid tubular neighborhood of a fiber $t$ in a (possibly degenerate) Seifert fibration $\mathcal{F}$ 
which contains $c$ in its interior as a cable of $t$:
$W_0 - \nbhd(c)$ is a cable space $W$ in which $t$ is an exceptional fiber of index greater than one. 
By definition the fiber slope of $W$ coincides with the longitudinal slope $\lambda$ on $\bdry N(c)$. 
(Hence $W_0(c; -\frac{1}{n}) = W \cup _{-\frac{1}{n}} N(c)$ is a solid torus for all integers $n$.)
It should be noted that the Seifert fibration of $W$ 
(in which the slope of the regular fiber agrees with the cabling slope $\gamma$) 
does not arise from the Seifert fibration $\mathcal{F} |_{W_0}$, 
because $c$ is not a fiber in $\mathcal{F}$. 
In particular, 
the slope of the fibration $\mathcal{F}|_{\bdry {W_0}}$ and the cabling slope $\gamma$ of $W$ in $\bdry W_0$ are distinct.

\medskip

First suppose that $\mathcal{F}$ is a degenerate Seifert fibration. 
Proposition~2.8 in \cite{DMM1} classifies degenerate Seifert fibrations of $K(r)$. 
(Proposition~2.8 in \cite{DMM1} treats the case when $r$ is an integer, 
but its proof works even when $r$ is rational.)
Then by  \cite[Proposition~2.8 (1)]{DMM1} it contains at most two degenerate fibers, 
and if there are two degenerate fibers, then $K(r) \cong S^1 \times S^2$ and $r = 0$.  
Let us assume that $\mathcal{F}$ has exactly one degenerate fiber $t$. 
Then  \cite[Proposition~2.8 (1)(ii)]{DMM1} shows that $K(r) - \nbhd(t)$ is a Seifert fibered space 
over the disk or the M\"obius band, 
and in the latter case $K$ is a trivial knot and $r = 0$. 
So we may assume that $K(r) - \nbhd(t)$ is a Seifert fibered space over the disk. 
We divide into two cases depending upon if the base orbifold is a disk with at least two cone points or 
a disk with at most one cone point. 
In the former case, $K(r)$ is a connected sum of at least two lens spaces; 
in fact, \cite[Proposition~2.8 (3)]{DMM1} shows that it has two lens space summands. 
It then follows from \cite{GLu} that $r \in \mathbb{Z}$. 
Suppose that we have the latter case.  
Then obviously $K(r)$ is a lens space and the degenerate Seifert fibration $\mathcal{F}$ has the following form. 
Let us take a solid torus $H_1$ which has a a non-degenerate Seifert fibration, 
and attach a solid torus $H_2$ to $H_1$ so that the meridian of $H_2$ is identified with a regular fiber of  $H_1$. 
Then the result is the lens space $K(r)$. 
Extend the Seifert fibration of $H_1$ to $H_2$ along meridian disks except the core of $H_2$ to obtain $\mathcal{F}$.  
Then the core of $H_2$ is a degenerate fiber in $\mathcal{F}$.   
First let us assume that $c$ is a cable of an exceptional fiber or a cable of a degenerate fiber, 
namely that $c$ is a cable of the core of $H_1$ or of $H_2$ respectively.  
Then $c$ is isotopic into the Heegaard torus $\bdry H_1 = \bdry H_2$, 
and we can change the Seifert fibrations of $H_1$ and $H_2$ so that $c$ is a fiber.  
Hence $c$ is a seiferter for $(K, r)$ and not a pseudo-seiferter, a contradiction. 
Thus we may assume that $c$ is a cable of a regular fiber in the non-degenerate fibered solid torus $H_1$. 
Then we may write $H_1 - \nbhd(c) = W \cup X$ where $W$ is a cable space with an exceptional fiber of index $p \ge 2$ and 
$\partial N(c) \subset \partial W$, 
and $X$ is a cable space with an exceptional fiber of index $r \ge 2$ and $\partial H_1 \subset \partial X$. 
Let $\gamma_W$ be a regular fiber of $W$ in $T = \partial W \cap \partial X$, 
and let $\gamma_X$ be a regular fiber of $X$ in $T$.  
Note that $\gamma_W$ and $\gamma_X$ represent distinct slopes on $T$, 
i.e. $\gamma_W \cdot \gamma_X \ne 0$. 
After $(-\frac{1}{n})$--surgery on $c$, 
which corresponds to an $n$--twist along $c$, 
$W_n = W \cup_{-\frac{1}{n}} N(c)$ is a solid torus with a meridian $\mu_n$. 
By Lemma~\ref{lem:meridianoffilledcablespace} 
$\mu_n = \mu_0 - n p \gamma_W$, 
and $\mu_n \cdot \gamma_X 
= (\mu_0 - n p \gamma_W) \cdot \gamma_X
= \mu_0 \cdot \gamma_X - np \gamma_W \cdot \gamma_X$.  
Since $p \ge 2$, $| \mu_0 \cdot \gamma_X| = 1$ and $\gamma_W \cdot \gamma_X \ne 0$,  
$|\mu_n \cdot \gamma_X| \ge 2$ if $|n| \ge 2$.  
Hence $H_1(c; -\frac{1}{n}) = W_n \cup X$ is a Seifert fibered space over the disk with two exceptional fibers 
if $|n| \ge 2$ (cf. \cite{Go_satellite}). 
Let us choose $n$ with $|n| \ge 2$. 
Then since the Seifert fibration of  $H_1(c; -\frac{1}{n}) = W_n \cup X$ is an extension of the Seifert fibration of $X$, 
a regular fiber on $\partial H_1(c; -\frac{1}{n})$ is a meridian of the solid torus $H_2$ in $K_n(r_n) = H_1(c; -\frac{1}{n}) \cup H_2$, 
and hence $K_n(r_n) = K_n(r_0 + n)$ is a connected sum of two lens spaces. 
This implies that $r_n = r_0 + n$ is an integer \cite{GLu}, 
and hence $r = r_0$ is an integer as claimed.

\medskip

In the following we now assume that $K(r)$ has a non-degenerate Seifert fibration $\mathcal{F}$ in which $c$ is a cable of some fiber. 
Performing $\lambda$--surgery on $c$, 
$S^3$ becomes $S^1 \times S^2$ since $c$ is an unknot, 
and $K(r)$ becomes a manifold with a lens space summand since $c$ is a cabled knot in $K(r)$ 
with cabling slope $\lambda$.  
We claim that this resulting manifold is actually reducible and not just this lens space.

First recall that, 
since $c$ is a pseudo-seiferter, 
$V(K;r) = X \cup W$ where $X$ is a Seifert fibered space and $W$ is the above cable space, 
and $K(r) = X \cup W_0$.
Furthermore $W_0(c; \lambda) \cong S^1 \times D^2 \# L(p,q)$ 
where $p >1$ is the index of the exceptional fiber in the cable space $W$ and $q$ is some integer coprime to $p$, 
and the meridian of the $S^1 \times D^2$ summand in $\bdry W_0(c; \lambda)$ coincides with the cabling slope $\gamma$ of $W$ in $\bdry W_0=\bdry X$.
Therefore,  $\lambda$--surgery on $c$ in $K(r)$ is the manifold 
\[(K \cup c)(r, \lambda) = X \cup W_0(c; \lambda) = X \cup (S^1 \times D^2 \# L(p,q)) = X(\gamma)  \# L(p,q).\]

Now assume that this manifold $(K \cup c)(r, \lambda)$ is irreducible, 
which implies that $X(\gamma) \cong S^3$.  
Since $X$ is a Seifert fibered space, 
$X$ must be the exterior of some torus knot, say $T_{a,b}$, 
and $\gamma$ is its meridian $\mu_X$ (still oriented to be homologous to $\lambda$ in $W$). 
Let $\lambda_X$ be its preferred longitude. 
Then since $K_n(r_n) = (K \cup c)(r, -\tfrac{1}{n}) = X \cup W_0(c;-\tfrac{1}{n})=X \cup W_n$ and 
$W_n=W_0(c; -\tfrac{1}{n})$ is a solid torus for every integer $n$, 
$K_n(r_n) = T_{a,b}(s_n)$ where $s_n$ is the slope of the meridian $\mu_n$ of 
the solid torus $W_n$ with respect to the basis $\mu_X, \lambda_X$ in $\bdry X = \bdry (S^3-\nbhd(T_{a,b}))$.  
If $\mu_0 = C\mu_X+D\lambda_X$, then since $\mu_X=\gamma$ and $\mu_0 \cdot \gamma = p$, 
we must have $D=-p$ and so $s_0= -\frac{C}{p}$.  
Since  $\mu_n = \mu_0 - n p \gamma$ by Lemma~\ref{lem:meridianoffilledcablespace}, 
then $\mu_n = (C-np)\mu_X + (-p)\lambda_X$ has slope 
$s_n = \frac{-C+pn}{p} = s_0+n$.  
Therefore, since $r_n = r_0 + n \omega^2 = r_0 + n$ where $\omega = |\lk(K, c)| =1$,  
$K_n(r_0 + n) = T_{a,b}(s_0+n)$ for all integers $n$. 
Then, writing $r_0 = \frac{A}{B}$ for some coprime integers $A, B$ with $B > 0$, 
because $|H_1(K_n(r_0 + n))| = |H_1(T_{a,b}(s_0+n))|$, 
we have $|A + Bn| = |-C+pn|$ for all integers $n$. 
This implies that $r_0 = \frac{A}{B} = -\frac{C}{p} = s_0$, 
and more generally $r_n = s_n$ for all integers $n$. 
Then by Ni-Zhang \cite[Theorem~1.3]{NZ} (\cite{McCoy}), 
$K_n = T_{a, b}$ for $n$ sufficiently positive or sufficiently negative.   
Hence it follows from \cite{KMS} that $c$ bounds a disk which intersects $K$ at most once, 
contradicting the assumption on the twisting circle $c$. 
This establishes that $(K \cup c)(r, \lambda)$ is reducible.

Recall that after $\lambda$--surgery on $c$, 
$S^3$ becomes $S^1 \times S^2$ which contains $K$.  
Let us observe that $S^1 \times S^2 - K$ is irreducible. 
If there is an essential $2$--sphere $S$ in $S^1 \times S^2 - K$, 
then by the primeness of $S^1 \times S^2$, 
it either  is non-separating or bounds a ball in $S^1 \times S^2$ containing $K$. 
If it were non-separating, 
then it would be non-separating in $S^1 \times S^2$ as well, 
and since $|\lk(K, c)| = 1$, 
$K$ generates $H_1(S^1 \times S^2)$ and this must intersect $S$ since the algebraic intersection number between $K$ and $S$ is one. 
This contradicts $S$ being a $2$--sphere in the exterior of $K$.  
If $S$ were to bound a ball in $S^1 \times S^2$ that contains $K$, 
then we would have $\lk(K,c) = 0$; 
but this is not the case. 

Thus viewing $K$ as a knot in $S^1 \times S^2 = V \cup_{\lambda} N(c)$ with irreducible exterior, 
$r$--surgery on $K$ produces a reducible manifold $V(K; r) \cup_{\lambda} N(c)$. 
Since $S^1 \times S^2$ is reducible as well,  
\cite[Theorem~1.2]{GLreducible} implies that $r \in \Z$.
\end{proof}

Let us turn to a proof of Proposition~\ref{prop:pseudoseiferterlinking}. 

\begin{proof}[Proof of Proposition~\ref{prop:pseudoseiferterlinking}.]
Assume for a contradiction that a Seifert surgery $(K, r)$ has a pseudo-seiferter $c$ with $|\lk(K,c)| = 1$. 
Then following Lemma~\ref{lem:integralsurgery} the surgery slope $r$ is an integer $m=m_0$.

Recall that, with $r=m$, 
$V(K;m) = W \cup X$ where $W$ is a cable space of order $p$ and $X$ is a Seifert fibered space 
(possibly with a degenerate Seifert fibration).
As in the proof of Lemma~\ref{lem:integralsurgery}, $K_n(m_n) = X(\mu_n)$ 
where $\mu_n$ is the meridian of the solid torus $W_n = W \cup_{-\tfrac{1}{n}} N(c)$.  
The curve $\gamma$ is a regular fiber of $W$ in $\bdry X$, 
and as in Lemma~\ref{lem:meridianoffilledcablespace}, $\mu_n = \mu_0 - n p \gamma$ in $H_1(\bdry X)$.

Let $C$ be the core of the solid torus $W_0$ in $K(m) = K_0(m_0) = X(\mu_0)$. 
Isotope $C$ so that $C \cap N(K^*) = \emptyset$, where $K^*$ is the surgery dual of $K$. 
Then $C \subset K(m) - \nbhd(K^*) = S^3 - \nbhd(K)$. 
Now we can view $C$ as a knot in $S^3$ disjoint from $K$ that becomes isotopic to the core of the solid torus $W_0$ after $m$--surgery on $K$.
(Furthermore, we may view $c$ as a cable of $C$  in $X(\mu_0)$.)  
Then $K_n(m_n) = X(\mu_n)$ may be presented as Dehn surgery on the link $K \cup C$ 
where $m$--surgery is done as before on $K$ and $\mu_n$--surgery is done on $C$.

Let us now use this to obtain an explicit computation of homology.
Let $\lambda_0$ be the preferred longitude of $C$ in $S^3$.  
Then $\gamma$, a regular fiber of $W$ in $T=\bdry X \subset \bdry W$,
may be expressed as the curve $p \lambda_0 + q \mu_0$ for some integer $q$ coprime to $p$, 
since $\mu_0 \cdot \gamma = p$ by Lemma~\ref{lem:meridianoffilledcablespace}. 
Hence $\mu_n = \mu_0 - np(p \lambda_0 + q \mu_0) = - np^2 \lambda_0 + (1-npq)\mu_0$.   
Thus $\mu_n$--surgery is $\tfrac{npq-1}{np^2}$--surgery in standard coordinates.  

To simplify exposition, 
we apply a ``slam-dunk" move \cite[p.163]{GS}. 
Let $C'$ be a meridian of $C$.  
Then $\mu_n$--surgery on $C$ can be viewed as $0$--surgery on $C$ and $\tfrac{-np^2}{npq-1}$--surgery on $C'$.   

Now we may obtain the presentation matrix $M_n$ given below for the homology of $K_n(m_n) = X(\mu_n)$ from its surgery presentation on the link $K \cup C \cup C'$:

\[
M_n = \left(
\begin{array}{ccc}
* &* & 0 \\
* & 0 & 1 \\
0 &  npq-1   & -np^2
\end{array}
\right).
\]

Since $K_n(m_n)=X(\mu_n)$ is a rational homology sphere obtained by integral $m_n$--surgery on the knot $K_n$ in $S^3$, 
\[|m_n| = |H_1(X(\mu_n))|  = |\det(M_n)| = |A(-np^2)-B(npq-1)| = |np(Ap+Bq)-B|\]
for some integers $A$ and $B$. 
Thus, because $m_n = m_0 + n \omega^2$ where $\omega = \lk(K,c)$, 
we have that for a sufficiently large integer $n$, 
\begin{align*}
\lk(K,c)^2 &= |H_1(X(\mu_{n+1}))| - |H_1(X(\mu_n))| \\
&=|(n+1)p(Ap+Bq)-B| - |np(Ap+Bq)-B|\\
&= |p(Ap+Bq)|. 
\end{align*}

This is impossible, because $p\geq 2$ and $|\lk(K,c)| = 1$.
\end{proof}

\bigskip

\section{L-space surgeries in twist families with linking number one}
\label{sec:link=1}

Our goal in this section is to prove: 

\medskip

\begin{thmSeifertLspace}
Let $\{(K_n, r_n)\}$ be a twist family of surgeries obtained by twisting $(K, r)$ along an unknot $c$ with $|\lk(K, c)| = 1$; 
$c$ is not a meridian of $K$. 
Assume that $(K_n, r_n)$ is a Seifert surgery for at least ten integers $n$. 
Then there are only finitely many L-space surgeries in the family.   
\end{thmSeifertLspace}

\noindent
\textit{Proof of Theorem~\ref{twist_seiferter_genus}.}
Suppose for a contradiction that $\{ (K_n, r_n) \}$ contains infinitely many L-space surgeries.  
Since $\{ (K_n, r_n) \}$ contains more than nine Seifert surgeries, 
then by Theorem~\ref{thm:cisseiferter} 
$(K, r)=(K_0,r_0)$ is a Seifert surgery and
$c$ is a seiferter or a pseudo-seiferter for $(K, r)$. 
Since $|\lk(K,c)| = 1$, 
by Proposition~\ref{prop:pseudoseiferterlinking} $c$ is not a pseudo-seiferter, 
and thus $c$ is a seiferter for $(K, r)$. 

\medskip

\begin{lemma}
\label{integral}
Let $(K, r)$ be a Seifert surgery which has a seiferter $c$ with $|\lk(K, c)| = 1$. 
Then $r$ is an integer.
\end{lemma}

\begin{proof}
Let $\mathcal{F}$ be the Seifert fibration in which $c$ is a (possibly degenerate) fiber.
In the following, we always consider this Seifert fibration for $K(r)$. 

First suppose that $\mathcal{F}$ is a non-degenerate Seifert fibration. 
Then $K(r) - \nbhd(c) = V(K, r)$ is a Seifert fibered space with non-degenerate Seifert fibration, 
and \cite{MM3} shows that either $r$ is an integer or $K$ is a $0$--bridge braid or a cable of a $0$--bridge braid in $V$. 
In the latter cases, $|\lk(K, c)| \ge 2$, contradicting the assumption. 
Thus $r$ is an integer. 

Now let us assume that $\mathcal{F}$ is a degenerate Seifert fibration.  
Proposition~2.8 in \cite{DMM1} classifies such Seifert fibrations of $K(r)$ for $r \in \Z$. 
Since the argument in its proof does not depend on the assumption that $r \in \Z$, 
we are able to conclude that 
$K(r)$ is a lens space or a connected sum of lens spaces. 
In the former either $r \in \Z$ or $K$ is a torus knot \cite{CGLS}, 
and in the latter $r \in \Z$ \cite{GLu}.
We proceed to exclude the possibility that $K$ is a torus knot and $K(r)$ is a lens space with a degenerate Seifert fibration under the assumption 
$|\lk(K,c)| = 1$. 
Proposition~2.8 in \cite{DMM1} further shows that if the Seifert fibration of $K(r)$ has more than one degenerate fiber, 
then $K$ is the trivial knot and $r=0$.  
Thus we may assume the Seifert fibration has just one degenerate fiber and $K(r) = H_1 \cup H_2$,  
where $H_1$ is a non-degenerate Seifert fibered solid torus such that its core is a fiber and $H_2$ is a degenerate fibered solid torus such that  its core is a degenerate fiber. 
If $c$ is either the core of $H_1$ or the degenerate fiber that is the core of $H_2$,  
then $K(r) - \nbhd(c) = V(K, r)$ is a solid torus, where $V = S^3 - \nbhd(c)$. 
Then following \cite{Gabai_solidtorus} $K$ is a $0$-- or $1$--bridge braid in $V$, 
and $|\lk(K, c)| \ge 2$, a contradiction. 
Hence $c$ is a regular fiber in the degenerate Seifert fibration of the lens space $K(r)$ and not isotopic to the core of $H_1$.
By an isotopy we may assume $c$ is a regular fiber in $H_1$. 
Note that the core of $H_1$ is an exceptional fiber of index $\ge 2$, 
for otherwise $c$ would be isotopic to the core of $H_1$. 
Then $K(r) - \nbhd(c) = V(K, r)$ is a connected sum of $S^1 \times D^2$ and a nontrivial lens space. 
It follows from \cite{Sch} that $K$ is cabled in $V$, 
and hence $|\lk(K, c)|$ cannot be one. 
\end{proof}

\medskip

Due to Lemma~\ref{integral},  
we now take $r$ to be an integer $m$ in what follows.

Since $c$ is a seiferter, 
 $(K_n, m_n)$ is a Seifert surgery for all $n$ by the Inheritance Property \cite[Proposition~2.6]{DMM1}. 
Recall that $m = m_0$ and $m_n = m_0 + n \lk(K,c)^2 = m_0 + n$. 

\medskip

\begin{lemma}
\label{link=1seiferter}
Let $c$ be a seiferter for a Seifert surgery $(K, m)$ with $|\lk(K,c)| = 1$. 
Then either $K$ is a torus knot and $c$ is a meridian of $K$,
or $c$ is a hyperbolic seiferter, i.e.\ $S^3 - K \cup c$ is hyperbolic. 
\end{lemma}

\begin{proof}
We apply the classification theorems of seiferters for Seifert surgeries, Theorems~3.2 and 3.19 in \cite{DMM1}. 
Suppose for a contradiction that we have a seiferter $c$ for $(K, m)$ with $|\lk(K,c)| = 1$ which is neither a meridian of $K$ nor a hyperbolic seiferter for $(K, m)$. 

Let us take a solid torus $V = S^3 - \nbhd(c)$ (with the core $C_V$), 
which contains $K$ in its interior.  
Among descriptions of $K \cup c$ in Theorems~3.2 and 3.19 in \cite{DMM1}, 
we divide them into three cases:  

\begin{enumerate}
\item 
$K \cup c$ is not of \textbf{types 3} or \textbf{4}  in \cite[Theorem~3.2]{DMM1}. 
Since we are excluding the case where $c$ is a hyperbolic seiferter, 
$K \cup c$ is of \textbf{type 1}. 
Since $c$ is not a meridian of $K$,  
it is an exceptional fiber in a Seifert fibration for $K = T_{p, q}$, 
i.e. $K$ is a $0$--bridge braid in $V$. 
Hence $|\lk(K,c) | \ge 2$, a contradiction.

\item 
$K \cup c$ is of \textbf{type 3} in \cite[Theorem~3.2]{DMM1}. 
Then there is a knotted solid torus $V'$ disjoint from $c$ which contains $K$ in its interior and 
whose core $C_{V'}$ is a $0$--bridge braid in $V = S^3 - \nbhd(c)$; $C_{V'}$ is a nontrivial torus knot in $S^3$. 
Thus $|\lk(C_{V'}, c)| \ge 2$, and $|\lk(K,c) |= |x| |\lk(C_{V'}, c)|$ cannot be $1$, 
where $[K] = x [C_{V'}] \in H_1(V'; \mathbb{Z})$. 

\item
$K \cup c$ is of \textbf{type 4} in \cite[Theorem~3.2]{DMM1}. 
In this case we have another seiferter $c'$ for $(K, m)$ such that $c$ is a nontrivial cable of $c'$ in $S^3$ and $K$ lies in the interior of $V' = S^3 - \nbhd(c')$. 
Since $c$ is unknotted in $S^3$, 
$c$ wraps $p\ (\ge 2)$ times in the longitudinal direction, 
and wraps exactly once in the meridional direction of $c'$. 
Then $|\lk(K,c) |= p |\lk(K, c')|$ cannot be $1$.  
\end{enumerate}

\end{proof}

\begin{remark}
\label{classification}
There are infinitely many Seifert surgeries $(K, m)$ each of which has a hyperbolic seiferter $c$ with $|\lk(K,c)| = 1$; 
see \cite[Theorem~6.21]{DMM1}. 
\end{remark}

\medskip

Since $S^3 - \nbhd(K_n)$ is the result of $(-\frac{1}{n})$--Dehn filling of $S^3 - \nbhd(K \cup c)$ along $c$, 
Lemma~\ref{link=1seiferter}, 
together with Thurston's Hyperbolic Dehn Surgery Theorem \cite{T1, T2, BePe, PetPorti, BoileauPorti}, 
shows that $K_n$ is a hyperbolic knot for all but finitely many integers $n$. 
Thus there is a constant $N > 0$ such that if $|n| > N$, 
then $K_n$ is a hyperbolic knot in $S^3$ and $(K_n, m_0 + n)$ is a Seifert surgery. 
Hence $K_n(m_0 + n)$ is a Seifert fibered space or a connected sum of lens spaces, 
in particular, it is not a hyperbolic $3$--manifold. 
Now let us recall the following result in \cite{NZ} which follows from the works of Agol \cite{A}, 
Lackenby \cite{La}, and Cao-Meyerhoff \cite{CaMe}.

\medskip

\begin{lemma}[\cite{NZ}]
\label{hyperbolic_genus}
Suppose that $K_n$ is a hyperbolic knot in $S^3$ and 
$K_n(m_0 + n)$ is not a hyperbolic $3$--manifold, 
then $|m_0+n| \le 10.752(2g(K_n) -1)$. 
\end{lemma}

The above inequality shows that when $|n|$ tends to $\infty$, 
the genus $g(K_n)$ of $K_n$ goes to $\infty$. 
Then it follows from Corollary~\ref{lk=1_finite} that 
$\{ (K_n, m_n) \}$ contains only finitely many L-space surgeries. 
Thus the proof of Theorem~\ref{twist_seiferter_genus} is completed. 
\hspace*{\fill} $\square$(Theorem~\ref{twist_seiferter_genus})

\medskip

\section{Braids and L-space knots}
\label{braids}

In this section we investigate Conjectures~\ref{conj:genus} and \ref{conj:genus_twist} from a viewpoint of braids. 
We have observed that for many of the twist families containing infinitely many L-space knots that are studied in \cite{Mote},  
the twisting circle is not only a seiferter but also a braid axis.
Furthermore, 
L-space knots are often isotopic to closures of positive or negative braids.

\subsection{Genera of positive braid closures}
\label{genera_braid_closures}

Well known to the experts,  
we provide a proof of the following.

\begin{proposition}
\label{prop:braid}
There are only finitely many knots of each genus that are closures of positive braids. 
\end{proposition}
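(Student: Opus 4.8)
The plan is to convert the problem into a finite count by bounding both the number of strands and the number of crossings of a positive braid presentation in terms of the genus. First I would record the basic genus formula. If a knot $K$ is the closure of a positive braid $\beta$ on $k$ strands with $c$ crossings, then applying Seifert's algorithm to the closed braid diagram produces a surface $F$ built from $k$ disks and $c$ positively twisted bands, so that $\chi(F) = k - c$. By Stallings' theorem the closure of a positive (indeed homogeneous) braid is fibered with this very surface as its fiber, so $F$ is of minimal genus; since $\partial F = K$ is connected, $\chi(F) = 1 - 2g(K)$ and hence
\[ c = k - 1 + 2g(K). \]
Moreover, for the closure to be connected (a knot) every generator $\sigma_1, \dots, \sigma_{k-1}$ must appear at least once, which only gives $c \geq k-1$ and does not by itself bound $k$.

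The crux is therefore to reduce to a presentation in which every generator appears \emph{at least twice}. I would prove the following destabilization lemma: if some generator $\sigma_i$ appears exactly once in a positive word whose closure is a knot, then that closure is the closure of a positive braid on $k-1$ strands with one fewer crossing. Up to cyclic permutation (which preserves the closure) write the word as $w\,\sigma_i$ with $w$ not involving $\sigma_i$. Since any two generators whose indices differ by at least $2$ commute, and the generators occurring in $w$ all lie in $\{\sigma_1,\dots,\sigma_{i-1}\}\cup\{\sigma_{i+1},\dots,\sigma_{k-1}\}$, we may sort $w$ as $w = w_L\,w_H$ with $w_L \in \langle \sigma_1,\dots,\sigma_{i-1}\rangle$ and $w_H \in \langle \sigma_{i+1},\dots,\sigma_{k-1}\rangle$. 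Then the strands $1,\dots,i$ and $i+1,\dots,k$ meet only at the single crossing $\sigma_i$, so this lone crossing is a Markov (de)stabilization: removing it and amalgamating strands $i$ and $i+1$ yields a positive braid on $k-1$ strands with the same closure. Iterating, we may assume each generator appears at least twice, whence $c \geq 2(k-1)$.

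Combining $c = k-1+2g$ with $c \geq 2(k-1)$ gives $k - 1 + 2g \geq 2(k-1)$, that is $k \leq 2g+1$, and therefore $c = k - 1 + 2g \leq 4g$. Thus every genus-$g$ knot that is a positive braid closure arises from a positive word of length at most $4g$ in at most $2g$ generators; there are only finitely many such words, and each determines a single closure, so there are only finitely many such knots. I expect the main obstacle to be the destabilization lemma, specifically justifying rigorously that when $\sigma_i$ is an \emph{interior} generator appearing once the resulting configuration really is a stabilization rather than merely a bridge in the spine of $F$; the commuting decomposition $w = w_L w_H$ is what makes the two blocks of strands interact through a single crossing, and the required reduction is then an isotopy of the closed braid (equivalently, a single Markov destabilization after relabeling), but spelling out this isotopy cleanly is the delicate point.
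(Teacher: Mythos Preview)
Your proof is correct and follows essentially the same route as the paper's: use Stallings to identify the Bennequin surface as the fiber, deduce $c=k-1+2g$, destabilize whenever a generator appears only once, and then bound the braid index by $2g+1$ and count words. The paper is terser about the destabilization step, simply noting that if some $\sigma_i$ occurs exactly once then $\beta$ admits a ``Markov type of destabilization'' and pointing to a figure; your commuting decomposition $w=w_Lw_H$ is a clean way to see this (geometrically it exhibits $\widehat{\beta}$ as $\widehat{w_L}\#\widehat{w_H}$, hence as the closure of a positive $(k-1)$--braid), so the point you flag as delicate is in fact fine.
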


\begin{proof}
Observe that there are exactly $(n-1)^\ell$ positive braids in $B_n$ with word length $\ell$. 
Also, if a positive braid $\beta \in B_n$ has word length $\ell$, 
then the oriented closed braid $\widehat{\beta}$ is a fibered link which bounds a Seifert surface (a fiber surface) 
with Euler characteristic $\chi(\widehat{\beta}) = n-\ell$ \cite[Theorem~2]{Sta}. 
Furthermore, if $\ell < 2(n-1)$ then either $\widehat{\beta}$ is a split link of at least two components or there is a positive braid $\beta' \in B_{n-1}$ 
such that $\widehat{\beta}= \widehat{\beta'}$. 
This is because the bound $\ell < 2(n-1)$ implies that some generator of $B_n$ either does not appear in $\beta$ at all or only appears once.  
In the former, the closure necessarily is a split link; in the latter, $\beta$ admits a Markov type of destabilization to $\beta'$.  
See Figure~\ref{fig:positivebraidindex} for an illustration of these two cases.

\begin{figure}[htbp]
\begin{center}
\includegraphics[width=\textwidth]{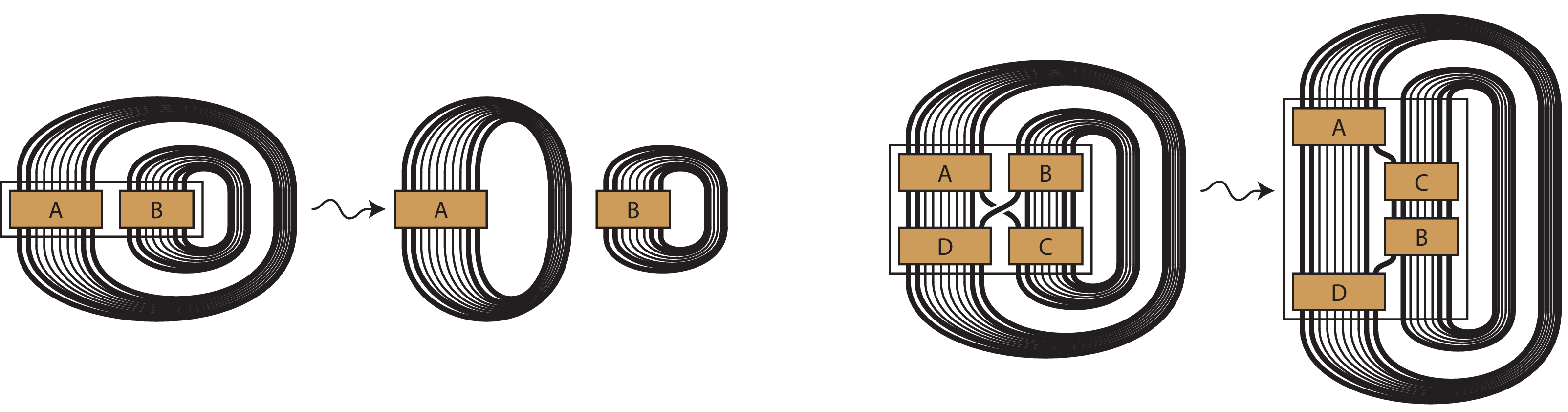}
\caption{If a generator of $B_n$ does not appear in the braid word $\beta$, the link $\widehat{\beta}$ is split.   
If a generator appears just once in the positive braid word $\beta$, 
there is positive braid word $\beta'$ of smaller braid index such that  $\widehat{\beta'}=\widehat{\beta}$. }
\label{fig:positivebraidindex}
\end{center}
\end{figure}

Now assume $K$ is a knot that is the closure of a positive braid.  
Let $n$ be the smallest index such that $K=\widehat{\beta}$ for a positive braid $\beta \in B_n$. 
Since a fibered knot has a unique minimal genus Seifert surface \cite[Lemma 5.1]{EL} (\cite{Thu}),  
$g(K) = \frac{1- \chi(\widehat{\beta})}{2} = \frac{1-n+\ell}{2}$, 
and the word length of $\beta$ is $\ell = 2g(K) + n -1 \geq 2(n-1)$.  
Hence $2g(K)+1 \geq n$.  
Thus the positive braid index of  a knot is bounded above by its genus.  
Therefore for each braid index there are only finitely many positive braids whose closure is a knot of a given genus.
\end{proof}

\medskip

As a direct consequence of Proposition~\ref{prop:braid} we have:  
 
\medskip
 
\begin{corollary}
\label{positive braids}
The class of L-space knots which are closures of positive and negative braids satisfy Conjecture~\ref{conj:genus}.  
\end{corollary}

Note that  the classes of closures of positive braids and L-space knots are distinct.  
For example, KnotInfo shows that the hyperbolic knot $10_{139}$ is the closure of a positive braid \cite{knotinfo}, 
but its Alexander polynomial indicates that it cannot be an L-space knot \cite{OS3}.  
On the other hand, 
the $(3, 2)$--cable of $T_{3, 2}$ is an L-space knot, 
but it is not a closure of any positive or negative braid, e.g.\ \cite[6.3]{HW2}. 

Now let us assume that the twisting circle $c$ is a braid axis for $K$, 
then $K_n$ is the closure of a positive braid for $n \gg 0$ and of a negative braid for $n \ll 0$. 
Hence, even without Theorem~\ref{genera_twist}, 
Proposition~\ref{prop:braid} implies Conjecture~\ref{conj:genus_twist} for the twist family $\{K_n\}$. 

However, a twisting circle $c$ which is not a braid axis for $K$ may provide a twist family $\{ K_n \}$ containing 
infinitely many L-space knots. 
For instance, \cite[Example 1.2]{Mote} 
shows the pretzel knots $K_n=P(-2,3,2n+1)$ with the $7+4n$ surgery is a twist family with $|\lk(K,c)| = 2$ producing Seifert fibered L-spaces for $n\geq0$. 
Since $K$ is in general not a torus knot  while all $2$--braids are, $c$ is not a braid axis.   
Nevertheless, note that these knots are positive $3$--braids when $n \geq0$.  
In the next subsection we provide further such examples.

\subsection{L-space knots obtained by twisting torus knots} 
\label{twisted torus knots}

Recall that torus knots $T_{p, q}$ are fundamental examples of L-space knots.
In the standardly embedded torus $T$ with preferred oriented meridian $m$ and longitude $l$, 
the torus knot $T_{p,q}$ is the unoriented curve in $T$ homologous to $\pm (q [l]+ p [m]) \in H_1(T)$ when given an orientation.  
Since $T_{p, q}$ is unoriented, 
we may choose that $|p| \ge q \ge1$; 
$|p| = q$ if and only if $(p, q) = (\pm 1, 1)$.
Then $T_{p,q}$ is a positive braid and a positive L-space knot whenever 
$p \ge 1$ and a negative braid and a negative L-space knot whenever $p \le -1$.  
(If ever $q=1$, then the knot is actually the unknot, 
and we regard the unknot to be both a positive and a negative braid. 
Recall that only the unknot is a positive and a negative L-space knot.)
We say $T_{p,q}$ is a positive torus knot in the former situation and a negative torus knot in the latter.  
Since $g(T_{p, q}) = \frac{(|p|-1)(q-1)}{2}$, for any given integer $N$, 
there are only finitely many $T_{p, q}$ with $g(T_{p, q}) \le N$.

In the following we show that 
every torus knot $K = T_{p, q}$ has a seiferter $c$ that yields a twist family of Seifert surgeries $(K_n, r_n)$ on hyperbolic knots $K_n$ for all but finitely many $n$ \cite{DMM1}, 
and this twist family contains infinitely many L-space surgeries \cite{Mote}.   
Notably if $p \le -3$ and $2q > |p| > q$, 
then $c$ is not a braid axis. 
However, these L-space knots can be re-arranged as closures of positive or negative braids.

Depicted on the left and right side of Figure~\ref{fig:torusknotseiferters} are unknots $c_+$ and $c_-$ disjoint from a once-punctured $T$, 
though the right side shows $-l$.  
Given the torus knot $T_{p,q}$ in the punctured torus $T$, 
we define $c_\pm$ to be the corresponding knot in the complement of $T_{p,q}$.  
It follows from \cite{DMM1} (where they are called $c^\pm_{p,q}$) that these are seiferters for the torus knots $T_{p,q}$ with the $pq$ surgery.  
The central two images of Figure~\ref{fig:torusknotseiferters} show that $T_{p,q} \cup c_+$ is the mirror of $T_{-p,q} \cup c_-$; 
the mirroring is through a vertical plane containing the curve $m$.  
Hence, by mirroring as needed, 
we may restrict attention to the seiferter $c_+$.

\begin{proposition}
\label{axis}
\
\begin{itemize}
\item 
If $p \ge 1$, or $p \le -2$ and $|p| \ge 2q$, 
then $c_+$ is a braid axis for $T_{p, q}$. 

\item
If $p \le -3$ and $q < |p| < 2q$, 
then $c_+$ is not a braid axis for $T_{p, q}$. 
\end{itemize}
\end{proposition}

\begin{proof}
The first assertion follows from Figures~\ref{fig:positivetorusknotseiferter-posbraid}, \ref{fig:negativetorusknotseiferter-braid} 
and \ref{fig:negativetorusknotseiferter-furtherbraid} (the bottom-left). 
If $p \le -3$ and $q < |p| < 2q$, 
then as shown in Figures~\ref{fig:negativetorusknotseiferter-braid} and \ref{fig:negativetorusknotseiferter-furtherbraid} (the bottom-right), 
$c$ links $T_{p, q}$ coherently, 
meaning that $c$ bounds a disk that $T_{p, q}$ always intersect in the same direction, 
with $| \lk(T_{p, q}, c_+)| = |p| - q < q$.  
By the assumption on $p$ and $q$, 
we have $|p| > q \ge 2$ and the braid index $T_{p, q}$ is known to be $q$ \cite[Proposition~10.5.2]{Crom}, 
and hence $c_+$ cannot be a braid axis for $T_{p, q}$.
\end{proof}

Define $T_{p,q,n}$ to be the result of an $n$--twist of the torus knot $T_{p,q}$ along the seiferter $c_+$.   
If $q = 1$, then $T_{p,1,n}$ is a torus knot $T_{p+1,\, 1 + (p+1)n} = T_{1+(p+1)n,\, p+1}$ for any non-zero integer $p$, 
which is a positive or negative braid and an L-space knot  for all integers $n$.
 
Theorem 1.7 of \cite{Mote} and its proof show that

\begin{itemize}
\item if $p \ge 1$ then $T_{p,q,n}$ is an L-space knot for all integers $n$, and
\item if $p \le -1$ then $T_{p,q,n}$ is an L-space knot for any integer $n \leq 1$.
\end{itemize}

Furthermore, in either case,  
$|\lk(T_{p,q,n}, c_+)| =|p+q|$ and the algebraic linking equals the geometric linking, 
i.e.\ $T_{p,q,n}$ intersects a disk bounded by $c_+$ in the same direction. 
See Figure~\ref{fig:torusknotseiferterasaxis}. 
Thus Theorem~\ref{genera_twist} shows that $g(T_{p, q, n}) \to \infty$ as $|n| \to \infty$ for any $p,q$ with $|p+q| > 1$.  
(If $|p+q|=1$, then $c_+$ is a meridian of $T_{p,q,n}$.  
If $|p+q|=0$, then $-p=q=1$ and $T_{-1,1,n} \cup c_+$ is the unlink, 
in particular, $T_{-1, 1, n}$ is the unknot for all integers $n$.)  
Hence Conjecture~\ref{conj:genus} is satisfied for each twist family of knots $T_{p,q,n}$ individually.  

However, by showing the L-space knots among all the knots $T_{p,q,n}$ are positive or negative braids, 
we may conclude that Conjecture~\ref{conj:genus} is satisfied for the twist families of knots $T_{p,q,n}$ collectively. 

\medskip

\begin{proposition}
\label{prop:twisttorusbraid}
\
\begin{itemize}
\item If $p \ge 1$ then $T_{p,q,n}$ is a positive or negative braid for all integers $n$.
\item If $p \le -1$ then $T_{p,q,n}$ is a positive or negative braid for any integer $n \leq 2$.
\end{itemize}
\end{proposition}

\medskip

\begin{corollary}
\label{Tpqn_conj}
Conjecture~\ref{conj:genus} is satisfied for the collection of knots $T_{p,q,n}$ with either 
$p \ge 1$, $q \ge 1$, 
and all $n$ or $p\le -1$, $q \ge 1$ and $n \leq 2$.
\end{corollary}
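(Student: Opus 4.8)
The plan is to deduce the corollary directly from Lemma~\ref{lem:twisttorusbraid} together with Proposition~\ref{prop:braid}. Lemma~\ref{lem:twisttorusbraid} asserts that every knot in the collection---namely $T_{p,q,n}$ with $p>0$, $q>0$ and arbitrary $n$, or with $p<0$, $q>0$ and $n\leq 2$---is the closure of a positive braid or the closure of a negative braid. Proposition~\ref{prop:braid} tells us that for each genus there are only finitely many knots that are closures of positive braids. So the first step is to record the matching finiteness statement for negative braid closures, and then to assemble these two statements across the finitely many genera $0,1,\dots,N$.

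First I would upgrade Proposition~\ref{prop:braid} to cover negative braids. If $\widehat{\beta}$ is the closure of a negative braid $\beta$, then its mirror image is the closure of the positive braid obtained by inverting each generator, and mirroring preserves the genus of a knot. Hence taking mirror images gives a genus-preserving bijection between negative braid closures and positive braid closures, so Proposition~\ref{prop:braid} immediately yields that there are only finitely many knots of each genus that are closures of negative braids. Combining the two cases, for each fixed $g$ there are only finitely many isotopy classes of knots that arise as a closure of a positive \emph{or} negative braid.

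Now fix an integer $N\geq 0$. By Lemma~\ref{lem:twisttorusbraid}, every knot $T_{p,q,n}$ in the stated ranges with $g(T_{p,q,n})\leq N$ is a positive or negative braid closure of genus at most $N$. The previous step shows that, summing over the finitely many values $g=0,1,\dots,N$, there are only finitely many isotopy classes of such knots. In particular, the L-space knots among the collection that have genus at most $N$ are finite in number, which is exactly the assertion that Conjecture~\ref{conj:genus} holds for this collection.

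I do not expect a serious obstacle, as the substance lies entirely in Lemma~\ref{lem:twisttorusbraid} and Proposition~\ref{prop:braid}. The only points requiring care are the mirror-symmetry reduction from negative to positive braids (where one checks that genus is preserved under mirroring, which it is) and the observation that the argument actually proves something stronger than needed: the \emph{entire} collection---not merely its L-space members, and including the possibly non-L-space knots occurring when $p<0$ and $n=2$---contains only finitely many isotopy classes of each genus, so Conjecture~\ref{conj:genus} for the collection follows a fortiori.
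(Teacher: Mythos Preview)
Your proposal is correct and takes essentially the same approach as the paper, which simply says the corollary ``follows immediately from Proposition~\ref{prop:braid} and Lemma~\ref{lem:twisttorusbraid}.'' You have merely spelled out the details the paper leaves implicit, in particular the mirror-symmetry reduction from negative to positive braid closures.
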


\begin{proof}
This follows immediately from Proposition~\ref{prop:braid} and Proposition~\ref{prop:twisttorusbraid}. 
\end{proof}

\medskip

\begin{proof}[Proof of Proposition~\ref{prop:twisttorusbraid}]
We represent the torus knot $T_{p,q}$ in the once punctured torus $T$ by one of two train tracks in $T$ depending on whether $p>0$ or $p<0$ (and requiring $q>0$).    
The knots carried by these train tracks after $(-\frac{1}{n})$--surgery on $c_+$ are our knots $T_{p,q,n}$.   
By a sequence of isotopies of $T$, $c_+$, 
and the train tracks along with splittings of the train tracks we will arrange the train tracks into positions where it is apparent that they carry positive or negative braids after $(-\frac{1}{n})$--surgery on $c_+$ for particular values of $n$. 

For $p \ge 1$, 
Figure~\ref{fig:positivetorusknotseiferter-posbraid} shows that the knot $T_{p,q,n}$ is actually a positive braid if $n \geq 0$ and a negative braid if $n \leq -1$.   

Suppose that $p \le -1$. 
If $p = -1$, 
then $q = 1$ and $T_{-1, 1, n}$ is the unknot for all integers $n$. 
For $p \le -2$, 
Figure~\ref{fig:negativetorusknotseiferter-braid} shows that the knot $T_{p,q,n}$ is a negative braid if $n \leq 0$. 
Continuing from this, 
Figure~\ref{fig:negativetorusknotseiferter-furtherbraid} indicates how to further isotope $T_{p,q,n}$ 
(with $p \le -2$) into a positive or negative braid for $n=1$ or $n=2$.    

First assume $n=2$.  
If $2q > |p|$ (as on the right side of Figure~\ref{fig:negativetorusknotseiferter-furtherbraid}), 
then the knot may be isotoped to a negative braid.   
If $|p| > 2q$ (as on the left side of Figure~\ref{fig:negativetorusknotseiferter-furtherbraid}), 
then the knot may be isotoped to a positive braid.  

Now assume $n=1$.  
Then we may discard the twisting circle in Figure~\ref{fig:negativetorusknotseiferter-furtherbraid}.    
If $2q> |p|$ (as on the right side of Figure~\ref{fig:negativetorusknotseiferter-furtherbraid}), 
then the knot may be isotoped to a negative braid.   
If $|p| > 2q$ (as on the left side of Figure~\ref{fig:negativetorusknotseiferter-furtherbraid}), 
then the knot may be isotoped into a configuration similar to the initial configuration at the top of Figure~\ref{fig:negativetorusknotseiferter-furtherbraid}, 
but with smaller braid index and mirrored (say, mirrored across a horizontal line below the diagram). 
This argument can now be repeated until a positive or negative braid, 
or an index-one braid (i.e. the unknot) is achieved.
\end{proof}

\medskip

\begin{question}
Which  knots $T_{p,q,n}$ with $p \le -2$, $q \ge 2$, and $n \geq 2$ are L-space knots?  
\end{question}

\begin{figure}
\begin{center}
\includegraphics[width=\textwidth]{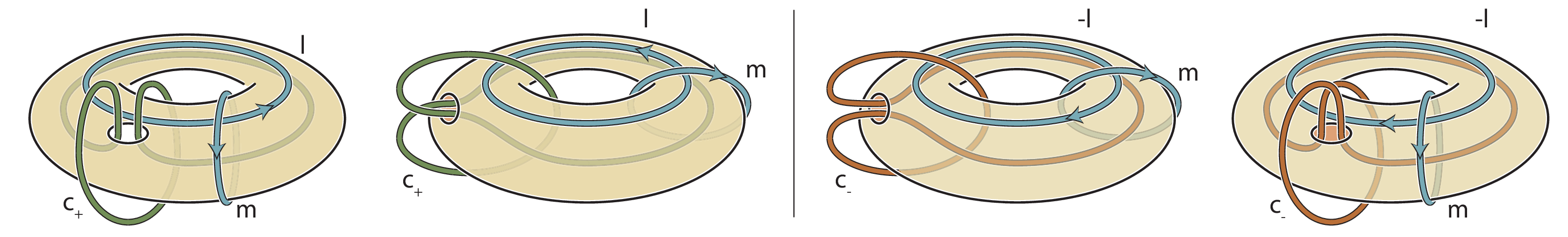}
\caption{The seiferter $c_+$ for a $(p,q)$--torus knot is mirror equivalent to the seiferter $c_-$ for a $(-p,q)$--torus knot.}
\label{fig:torusknotseiferters}
\end{center}
\end{figure}

\begin{figure}
\begin{center}
\includegraphics[width=\textwidth]{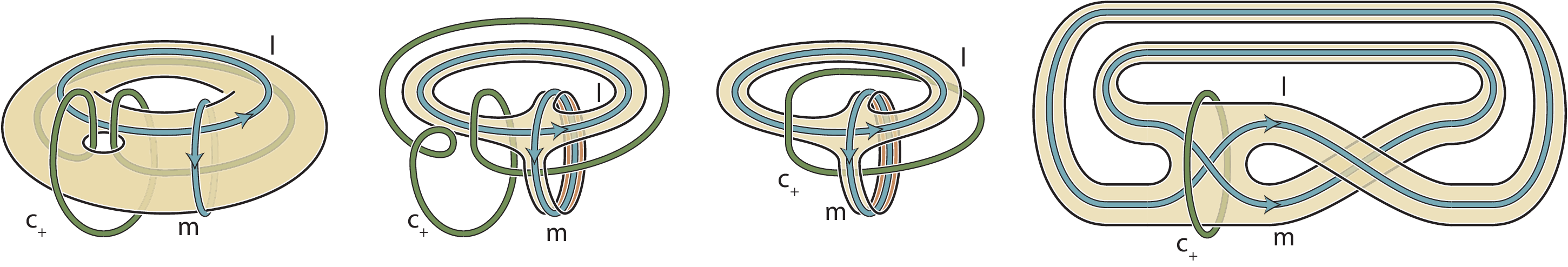}
\caption{The standardly embedded once-punctured torus, its preferred meridian-longitude basis, 
and the seiferter $c_+$ for the torus knots carried by this torus are isotoped into a convenient configuration.}
\label{fig:torusknotseiferterasaxis}
\end{center}
\end{figure}

\begin{figure}
\begin{center}
\includegraphics[width=\textwidth]{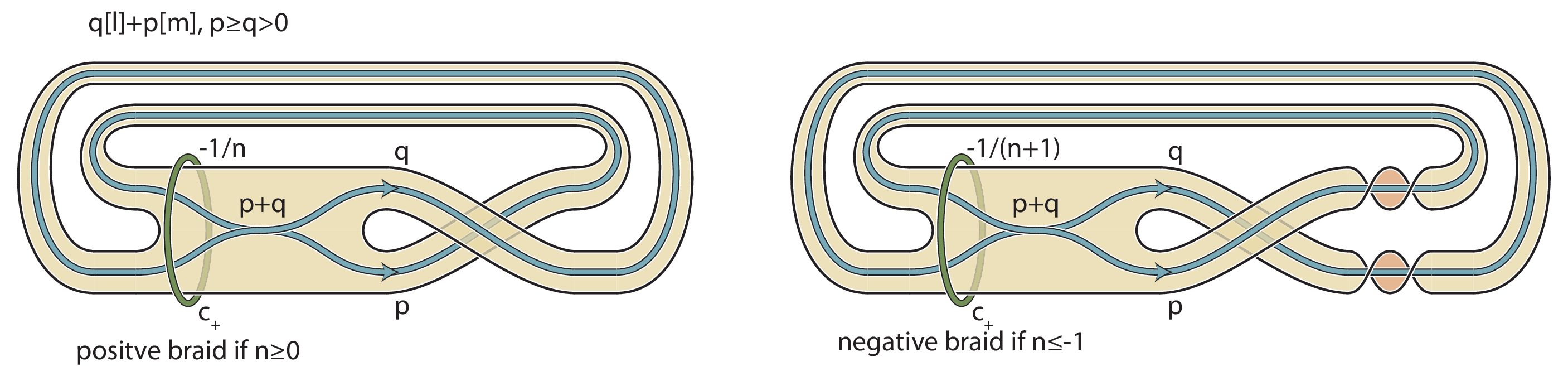}
\caption{Beginning with a positive torus knot, 
$(-\frac{1}{n})$--surgery on the seiferter $c_+$ produces a closed positive braid if $n \geq 0$ and a closed negative braid if $n\leq -1$.}
\label{fig:positivetorusknotseiferter-posbraid}
\end{center}
\end{figure}

\begin{figure}
\begin{center}
\includegraphics[width=\textwidth]{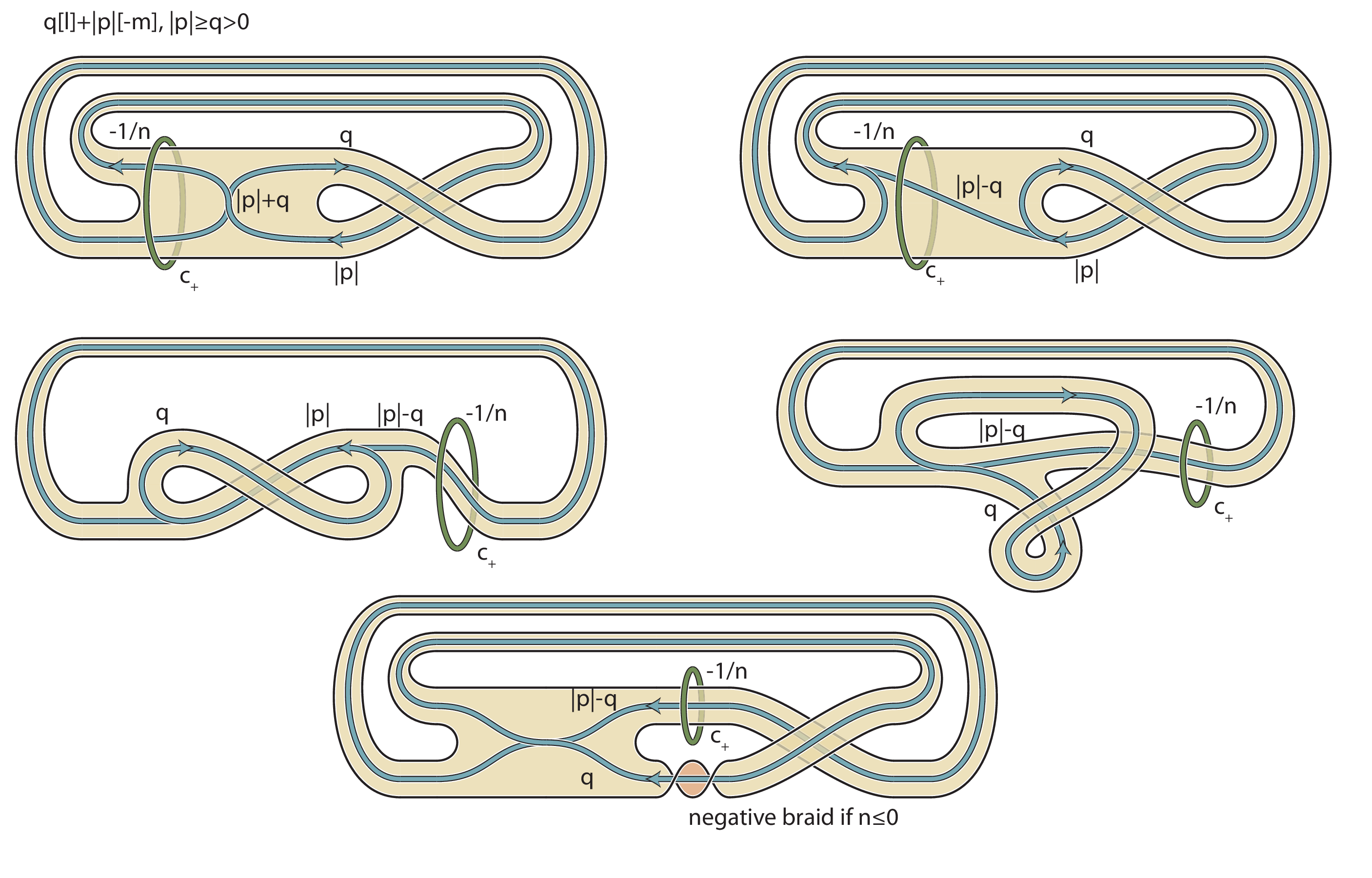}
\caption{Beginning with a negative torus knot, 
$(-\frac{1}{n})$--surgery on the seiferter $c_+$ produces a closed negative braid if $n \leq 0$.}
\label{fig:negativetorusknotseiferter-braid}
\end{center}
\end{figure}

\begin{figure}
\begin{center}
\includegraphics[width=\textwidth]{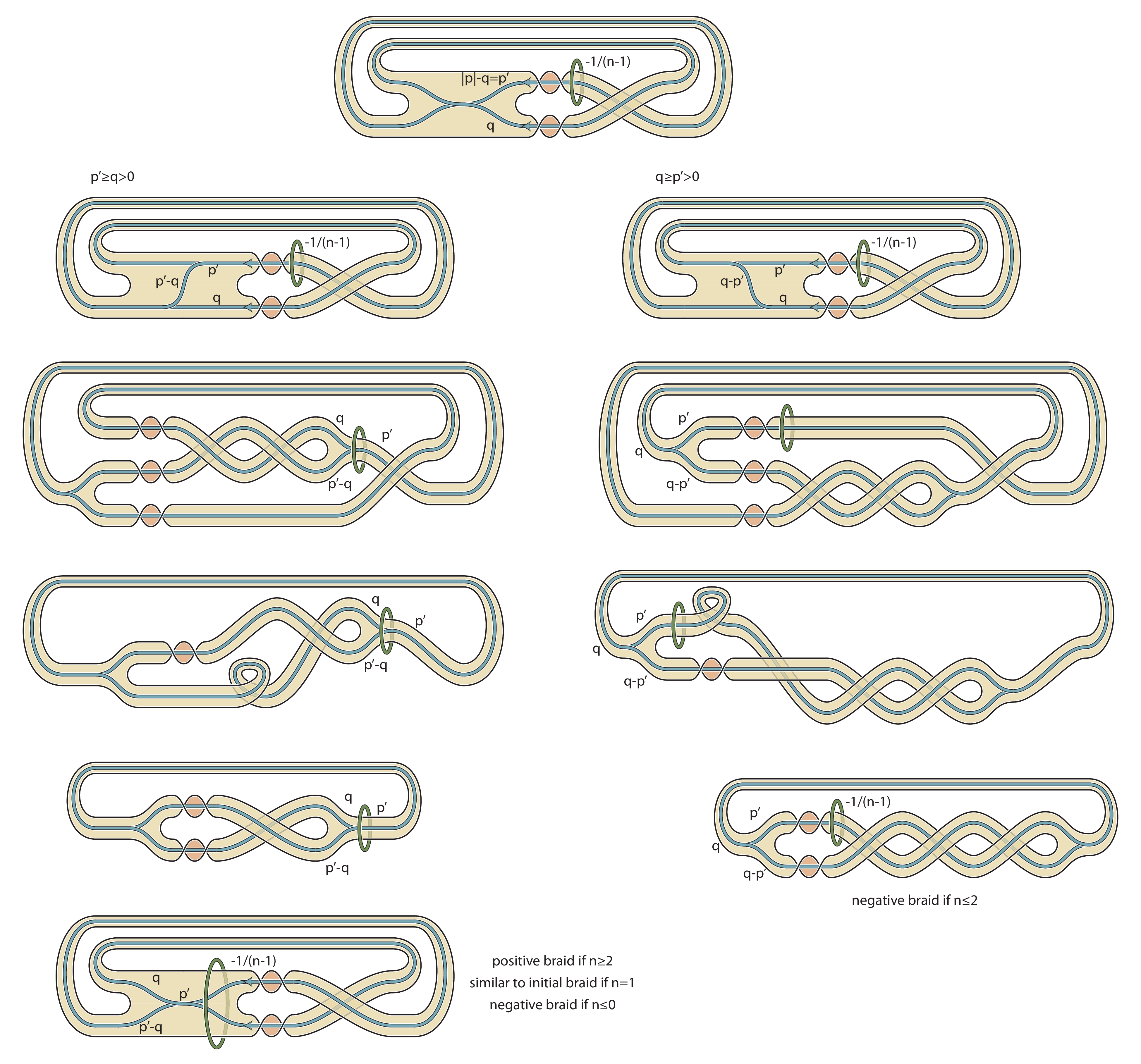}
\caption{Continuing from the end of Figure~\ref{fig:negativetorusknotseiferter-braid}, two possibilities are examined.  
Down the left, when $p' = |p|-q > q$: 
either $n\neq1$ and the knot can be isotoped to a positive or negative braid; 
or $n=1$ and the knot can be rearranged into a mirrored form of the initial position but with smaller braid index.  
Down the right, when $q > p'$, the knot can be isotoped into a negative braid  if $n \leq 2$. }
\label{fig:negativetorusknotseiferter-furtherbraid}
\end{center}
\end{figure}

\medskip

\section{Questions}
\label{questions}

We close this article with a few questions which arise in our study. 

\begin{question}
Is there a twist family $\{K_n\}$ containing infinitely many hyperbolic L-space knots that are not closures of positive or negative braids?
\end{question}

While no example in \cite{Mote} appears to give a positive answer to this question, we still expect such a twist family to exist.

Although we don't expect twist families containing infinitely many L-space knots to generically be closures of positive or negative braids, 
it still seems plausible that the knots should wrap coherently about the twisting circle.  

\medskip

\begin{question} 
\label{alg=geom}
If a twist family of knots $\{ K_n \}$ obtained by twisting $K$ about an unknot $c$ contains infinitely many L-space knots, 
then does $c$ bound a disk that $K$ always intersects in the same direction?
\end{question}
This behavior is observed in the examples of \cite{Mote}.  
Furthermore, in the case that $|\lk(K,c)|=1$, 
a positive answer would imply that $c$ is a meridian of $K$. 

\bigskip

\textbf{Acknowledgements} --
We would like to thank the referees for their careful readings and valuable suggestions that have enabled us to improve the article.

The first named author was partially supported by a grant from the Simons Foundation (\#209184 to Kenneth L.\ Baker).  
He would like to thank the Mathematics Department of Boston College and CIRGET at Universit\'{e} du Qu\'{e}bec \`{a} Montr\'{e}al for their hospitality during portions of the writing.
The second named author was partially supported by JSPS KAKENHI Grant Number JP26400099 and Joint Research Grant of Institute of Natural Sciences at Nihon University for 2015. 

\bigskip

\address{Department of Mathematics\\ University of Miami\\
Coral Gables, FL 33146\\ USA}
\email{k.baker@math.miami.edu}

\address{Department of Mathematics\\ Nihon University\\ 
3-25-40 Sakurajosui\\ Setagaya-ku\\
Tokyo 156--8550\\ Japan}
\email{motegi@math.chs.nihon-u.ac.jp}
\end{document}